\newtheorem{prop}{Proposition}[section]
\newtheorem{lemma}[prop]{Lemma}
\newtheorem{thm}[prop]{Theorem}
\theoremstyle{definition}
\theoremstyle{remark}
\newtheorem{remark}[prop]{Remark}
\numberwithin{equation}{section}
\begin{document}

\author{Hiroki Takahasi}

 \address{Keio Institute of Pure and Applied Sciences (KiPAS), Department of Mathematics,
 Keio University, Yokohama,
 223-8522, JAPAN} 
 \email{hiroki@math.keio.ac.jp}

\subjclass[2020]{Primary 37A50; Secondary 37B10, 60F10}
\thanks{{\it Keywords}: Dyck shift; periodic points; Large Deviation Principle; rate function}

\date{}

\title[Large deviation principles 
for periodic points of the Dyck shift ]
 {Large deviation principles 
for\\ periodic points of the Dyck shift} 

 \maketitle

 \begin{abstract}
 We investigate periodic points of the Dyck shift from the viewpoint of large deviations.
We establish 
 the level-2 Large Deviation Principle
 with the rate function given in terms of
 Kolmogorov-Sinai entropies of shift-invariant Borel probability measures. 
 Unlike %transitive 
 topologically mixing Markov shifts, the level-2 rate function is
  non-convex and level-1 rate functions are superpositions of two convex continuous functions. % and thus can have points of non-differentiability. 
 Using the %a 
%classical 
thermodynamic formalism, we show the analyticity of level-1 rate functions in some relevant cases.
%in neighborhoods of minimizers.
%    In some particular case, 
We %compute 
display a non-convex level-1 rate function with a non-differentiable point in the interior of its effective domain.
 %and provide formulas for the corresponding two rate functions in terms of entropies of shift-invariant Borel probability measures.
 %provide formulas for the corresponding two rate functions in terms of entropies of shift invariant Borel probability measures.
      \end{abstract}

\section{Introduction}
 The theory of large deviations aims to characterize limit behaviors of probability measures in terms of rate functions. 
We say the {\it Large Deviation Principle} (LDP) holds for a sequence 
 $(\mu_n)_{n=1}^\infty$ of Borel probability measures on a %{\bf Hausdorff?} 
 topological space $\mathcal X$
  if there exists a lower semicontinuous function $I\colon\mathcal X\to[0,\infty]$ such that
\[\tag{lower bound}
 \liminf_{n\to\infty}\frac{1}{n}\log \mu_n(\mathcal G)\geq -\inf_{\mathcal G} I\ 
\text{ for any open set } 
\mathcal G\subset\mathcal X\]
and
\[\tag{upper bound}\limsup_{n\to\infty}\frac{1}{n}\log \mu_n(\mathcal C)\leq-\inf_{\mathcal C}I \ 
\text{ for any closed set }\mathcal C\subset \mathcal X.\]
%where $\inf\emptyset=\infty$, $\log0=-\infty$ by convention.
We say $x\in \mathcal X$ is a {\it minimizer} if $I(x)=0$ holds. 
 The LDP roughly means that in the limit $n\to\infty$ the measure $\mu_n$ assigns
      all but exponentially small mass 
      to the set $\{x\in\mathcal X\colon I(x)=0\}$
      of minimizers.
      The function $I$ is called a {\it rate function}.
If $\mathcal X$ is a metric space and $(\mu_n)_{n=1}^\infty$ satisfies the LDP, the rate function is unique. 
The domain $D(I)=\{x\in\mathcal X\colon I(x)<\infty\}$ is called the {\it effective domain}.
We say the rate function $I$ is {\it good} if $\{x\in\mathcal X\colon I(x)\leq c\}$ is compact for any $c>0$.
%If $\mathcal X$ is compact Hausdorff, then the rate function is good.
%We refer the reader to \cite{DemZei98,Ell85} for a general account on the theory of large deviations.

Let us introduce a standard setup for large deviations in dynamical systems, i.e., iterated maps.
Let $X$ be a topological space and
let
 $T\colon X\to X$ be a Borel map.
Let $\mathcal M(X)$ denote the space of Borel probability measures on $X$ endowed with the weak* topology (see $\S$\ref{weak*}),
and let $\mathcal M(X,T)$ denote the subspace of $\mathcal M(X)$ that consists of $T$-invariant elements. For each $\mu\in\mathcal M(X,T)$, 
let $h(\mu)$
denote the Kolmogorov-Sinai entropy of $\mu$ with respect to $T$.
For $x\in X$ and $n\in\mathbb N$ define
an {\it empirical measure}
\[V_{n}(T,x)=\frac{1}{n}(\delta_x+\delta_{T( x)}+\cdots+\delta_{T^{n-1}(x)})\in \mathcal M(X),\] 
where $\delta_y\in\mathcal M(X)$ denotes the unit point mass at $y\in X$.
The LDP for a sequence of Borel probability measures on $\mathcal M(X)$ associated with %$\delta_{V_n(T,x)}$ ($x\in X$, $n\in\mathbb N$) 
empirical measures is referred to as {\it level-2}. 
%(see e.g., \cite[Chapter~I]{Ell85}).
Given a Borel observable $f\colon X\to\mathbb R$, for $x\in X$ and $n\in\mathbb N$ define a Birkhoff sum \[S_nf(x)=f(x)+f(T(x))+\cdots+f(T^{n-1}(x)).\] 
LDPs for sequences of Borel probability measures on $\mathbb R$ associated with the Birkhoff average $(1/n)S_nf(x)$ are referred to as {\it level-1} (see \cite[Chapter~I]{Ell85}).

This paper is concerned with the LDP in the context of symbolic dynamics.
Let $S$ be a non-empty finite discrete set, called a {\it finite alphabet}.
Let $S^{\mathbb Z}$ denote 
the two-sided Cartesian product topological space of $S$.
%called the {\it full shift} over $S$.
The left shift acts continuously on $S^{\mathbb Z}$.
   A shift-invariant closed subset of $S^{\mathbb Z}$ is called a {\it subshift} over $S$, and $S^{\mathbb Z}$ is called the {\it full shift}.
   %The quantity \[\limsup_{n\to\infty}\log\frac{1}{n}\# L_n(\Sigma)\in[0,\log\#S]\] is called the {\it topological entropy} of $\Sigma$ and denoted by $h_{\rm top}(\Sigma)$.
%Let $\sigma$ denote the left shift acting on $\Sigma$ and let $\mathcal M(\Sigma,\sigma)$ denote the subspace of $\mathcal M(\Sigma)$ that consists of shift-invariant elements. 
%For each $\mu\in\mathcal M(\Sigma,\sigma)$, 
%the quantity \[h(\mu)=-\lim_{n\to\infty}\frac{1}{n}\sum_{\omega\in L_n(\Sigma)}\mu([\omega])\log\mu([\omega])\]
%let $h(\mu)$
%denote the {\it (measure-theoretic) entropy} of $\mu$ with respect to $\sigma$. %($0\log0:=0$).
%The variational principle asserts that \[h_{\rm top}(\Sigma)=\sup\{h(\mu)\colon\mu\in\mathcal M(\Sigma,\sigma)\}.\]
%Since the entropy function $\mu\in\mathcal M(\Sigma,\sigma)\mapsto h(\mu)$ is upper semicontinuous and $\mathcal M(\Sigma,\sigma)$ is compact, this supremum is attained. Measures that attain this supremum are called {\it measures of maximal entropy}.
Subshifts determined by transition matrices are called {\it Markov shifts}. 
For topologically mixing Markov shifts, the
level-2 LDP has already been 
well-established: see \cite{Kif90,OrePel89,Tak84} for distributions of empirical measures; see
 \cite{Kif94} for periodic points.
%Not much is known on the level-2 LDP for subshifts which are not Markov.

% Dynamical systems (iterated maps) equipped with finite Markov partitions are represented as finite Markov shifts, and the construction of relevant invariant measures and the investigation of their statistical properties are done on the symbolic level, with adaptations of ideas  in statistical mechanics (see e.g., \cite{Bow75,BowRue75,Rue78,Sin72,Wal78}). This thermodynamic formalism initiated in the  60s has been successfully extended to maps with infinite Markov partitions and shift spaces with countably infinite number of states  (see e.g., \cite{ADU93,BS03,FieFieYur02,GS98,MauUrb03,Sar99,Sar03,Y99}). 

\subsection{The Dyck shift}

We investigate the
 level-2 LDP for periodic points of the Dyck shift, a subshift originating in the theory of languages \cite{AU68}. 
Let $M\geq2$ be an integer.
 Consider two different
alphabets consisting of $M$ symbols
\[D_\alpha=\{\alpha_1,\ldots,\alpha_M\}\ \text{ and }\ D_\beta=\{\beta_1,\ldots,\beta_M\},\] and set
 $D=D_\alpha\cup D_\beta.$
Let $D^*$ denote the set of finite words in $D$.
Consider the monoid with zero, with $2M$ generators in $D$ with relations 
\[\alpha_i\cdot\beta_j=\delta_{ij},\
0\cdot 0=0\ \text{ for } i,j\in\{1,\ldots,M\},\] \[\gamma\cdot 1= 1\cdot\gamma=\gamma,\
 \gamma\cdot 0=0\cdot\gamma=0\ 
\text{ for }\gamma\in D^*\cup\{ 1\},\]
where $\delta_{ij}$ denotes Kronecker's delta.
Define a map ${\rm red}\colon D^*\to D^*\cup\{0,1\}$ by
%For $n\in\mathbb N$ and $\gamma_1\cdots\gamma_n\in D^*$ 
%let
\[{\rm red}(\gamma_1\cdots\gamma_n)=\prod_{i=1}^n\gamma_i.\]
The subshift
\[
\Sigma_{D}=\{x=(x_i)_{i\in \mathbb Z}\in D^{\mathbb Z}\colon {\rm red}(x_j\cdots x_k)\neq0\ \text{ for all }j,k\in\mathbb Z\text{ with }j<k\}.\]
%Let $\sigma$ denote the left shift acting on $D^\mathbb Z$. Clearly we have $\sigma(\Sigma_D)=\Sigma_D$. For simplicity, we still use the letter $\sigma$ to denote the restriction of the left shift to $\Sigma_D$.
 is called the {\it Dyck shift}. If we interpret $D$ 
as a collection of $M$ brackets, $\alpha_i$ left and $\beta_i$ right in pair, then  %\cite{HI05,Kri74},
$\Sigma_D$ is the subshift whose admissible words are legally aligned brackets. The Dyck shift is a representative of property A subshifts \cite{Kri00}. % introduced by Krieger.

Dynamical properties of the Dyck shift are not well-understood yet. 
As a counterexample to the conjecture of Weiss \cite{W70}, 
Krieger \cite{Kri74} proved that there exist exactly two ergodic measures of maximal entropy $\log(M+1)$, which are fully supported and Bernoulli.
Meyerovitch \cite{Mey08} identified tail invariant measures for both one- and two-sided Dyck shifts. For each of the two ergodic maximal entropy measures, exponential decay of correlations for H\"older continuous observables was proved in \cite{T23}. % and hence the central limit theorem holds.
For periodic points of the Dyck shift,
Keller \cite{Kel91} investigated dynamical zeta functions. Hamachi and Inoue \cite{HI05} provided necessary and sufficient conditions for the existence of a proper embedding of an irreducible subshift of finite type into the Dyck shift in terms of periodic points and topological entropy.

\subsection{Level-2 LDPs}
Let $\sigma$ denote the left shift acting on $\Sigma_D$.
 For each $n\in\mathbb N$ let
\[{\rm Per}_n(\sigma)=\{x\in\Sigma_D\colon\sigma^nx=x\},\]
%Elements of ${\rm Per}_n(\sigma)$ are called {\it periodic points} of period $n$ of $\sigma$.
and define a Borel probability measure $\widetilde\mu_{n}$  on $\mathcal M(\Sigma_D)$ by 
\[\widetilde\mu_{n}(\cdot)=\frac{\#\{x\in {\rm Per}_{n}(\sigma )\colon V_{n}(\sigma,x)\in \cdot \}}{\#{\rm Per}_{n}(\sigma )  }.\]
Our main result is stated as follows.
 \begin{thm}[the level-2 LDP]\label{break-Dyck}
The LDP holds for $(\widetilde\mu_n)_{n=1}^\infty$.
The rate function $I\colon\mathcal M(\Sigma_D)\to[0,\infty]$ is not convex. The minimizers of $I$ are the two ergodic maximal entropy measures.
\end{thm}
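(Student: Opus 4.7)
My plan is to adapt the thermodynamic-formalism approach of Kifer \cite{Kif94} for topologically mixing Markov shifts to the non-mixing setting of the Dyck shift, exploiting the fact that periodic orbits decompose into disjoint classes whose analysis can be carried out separately and then assembled. Specifically, each $x\in{\rm Per}_n(\sigma)$ determines a period word $w=x_0\cdots x_{n-1}$ whose reduction in the Dyck monoid takes the form ${\rm red}(w)=\beta_{j_1}\cdots\beta_{j_p}\alpha_{i_1}\cdots\alpha_{i_q}$, with the matching $i_k=j_k$ for $1\le k\le\min(p,q)$ forced by the validity of $(w)^\infty$. The sign of $q-p$ partitions ${\rm Per}_n(\sigma)$ into three classes ${\mathcal P}_n^+$, ${\mathcal P}_n^-$ and ${\mathcal P}_n^0$. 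A ballot-type count yields $(1/n)\log\#{\mathcal P}_n^{\pm}\to\log(M+1)=:h_*$, dominated by orbits whose empirical measures concentrate near $\mu^{\pm}$ respectively, while $(1/n)\log\#{\mathcal P}_n^0\to h_0<h_*$.

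\emph{Class-wise LDP and assembly.} For each class $s\in\{+,-,0\}$ I would establish a conditional LDP for the empirical measure under the uniform distribution on ${\mathcal P}_n^s$, with a rate function $I^s\ge 0$ vanishing precisely at $\mu^s$. The upper bound follows from the variational principle applied to the subshift structure supporting class $s$, and the lower bound from a restricted specification argument exploiting the closure of each class under concatenation of period words. Weighting each class by its asymptotic proportion in ${\rm Per}_n(\sigma)$ assembles these into
\[
I(\mu)=\min\{I^+(\mu),\,I^-(\mu),\,I^0(\mu)+(h_*-h_0)\}.
\]
The minimizers of $I$ are then the unique minimizers of $I^\pm$, namely $\mu^+$ and $\mu^-$; the zero of $I^0$ is lifted by the positive offset $h_*-h_0$ and is not a minimizer of $I$.

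\emph{Non-convexity.} A convex combination $\nu_t=t\mu^++(1-t)\mu^-$ with $t\in(0,1)$ satisfies $h(\nu_t)=h_*$ by the affinity of Kolmogorov-Sinai entropy, but $\nu_t$ differs from both $\mu^\pm$. Since each $I^\pm$ has its unique minimizer at $\mu^\pm$, one has $I^\pm(\nu_t)>0$ and hence $I(\nu_t)>0$. Thus $I$ is strictly positive on the open segment between $\mu^+$ and $\mu^-$ in $\mathcal M(\Sigma_D)$, simultaneously establishing non-convexity of $I$ and the characterization of its minimizers as exactly the two ergodic MEMs.

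\emph{Main obstacle.} I expect the principal technical difficulty to be proving that $I^\pm$ has $\mu^\pm$ as its unique minimizer---that is, the class-$\pm$ conditional LDP concentrates on $\mu^\pm$ even among measures with entropy equal to $h_*$. Heuristically, a class-$\pm$ orbit approximating a convex combination $\nu_t$ would require ``switching'' between $\mu^+$-typical and $\mu^-$-typical regions, an operation that carries a positive entropy cost in $\Sigma_D$ due to the bracket-matching constraints. Quantifying this cost requires an analysis of the symbolic transitions between the two ergodic components, presumably via Krieger's property A structure \cite{Kri00} and the existing thermodynamic theory on $\Sigma_D$. A subsidiary obstacle is the restricted specification needed for the class-wise lower bounds, since $\Sigma_D$ itself lacks full topological specification.
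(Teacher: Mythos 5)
Your decomposition of ${\rm Per}_n(\sigma)$ by the sign of the multiplier $q-p$ (equivalently $H_n$), your ballot-type asymptotics, and your non-convexity argument via the segment $\nu_t=t\mu^++(1-t)\mu^-$ are all on the mark and very close to the paper's own: the paper groups ${\rm Per}_{\alpha,n}\cup{\rm Per}_{0,n}$ with ${\rm Per}_{\beta,n}$ into two (not three) weighted conditional LDPs and then assembles, and its non-convexity argument likewise uses the fact that $I$ jumps to $\infty$ on interior points of such a segment. However, the heart of your proof -- the class-wise LDP -- rests on two premises that do not hold and that the paper goes out of its way to circumvent.

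First, you treat the class supporting $\mathcal P_n^{s}$ as though it carried ``subshift structure'' on which a variational principle and a ``restricted specification by concatenation of period words'' can be applied. The sets $A_\alpha$, $A_\beta$, $A_0$ are shift-invariant Borel sets but not closed, hence not subshifts, and the Dyck shift, and a fortiori each of these class constraints, does not satisfy specification: concatenating two $\alpha$-multiplier period words $w_1$, $w_2$ produces a word whose reduction ${\rm red}(w_1w_2)$ requires nontrivial bracket matching across the boundary and can be inadmissible. This is not a ``subsidiary obstacle''; it is the central difficulty, and no simple patch is apparent. Second, your heuristic for the minimizer characterization -- that an orbit approximating $\nu_t$ must ``switch'' at positive entropy cost -- undersells what is actually true: the cost is infinite, because $\alpha$-multiplier periodic orbits lie in the set $B_\alpha$ of sequences with every right bracket closed, and any weak* limit of their empirical measures $\nu$ satisfies $\nu\circ\psi_\alpha^{-1}([D_\beta])\le 1/2$ (the paper's Lemma~\ref{lem-half}), which forces $\nu$ to charge $A_\alpha\cup A_0$ only. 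So empirical measures of the $\alpha$-class simply cannot accumulate on $\nu_t$ with $\nu_t(A_\beta)>0$, and symmetrically for the $\beta$-class.

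The missing idea that resolves both points is Krieger's Borel embedding: the shift-commuting homeomorphisms $\psi_\alpha\colon K_\alpha\to B_\alpha$ and $\psi_\beta\colon K_\beta\to B_\beta$, where $K_\gamma$ is a dense, shift-invariant, proper Borel subset of a \emph{full shift} on $M+1$ symbols. The paper first proves the LDP on $\mathcal M(K_\alpha)$ and $\mathcal M(K_\beta)$ by extending Takahashi's argument; since specification is freely available in the ambient full shift, the only issue is that the relevant measures must charge $K_\gamma$, i.e.\ must satisfy $\nu([\beta])<1/2$ (resp.\ $\nu([\alpha])<1/2$). This is handled not by specification but by entropy density of the full shift (Lemma~\ref{e-dense-lem}): one dilutes a boundary measure by a tiny amount of $\delta_{\alpha_1^\infty}$, approximates by a nearby ergodic measure with almost the same entropy via EKW, and deduces $\nu([\beta])<1/2$ strictly, hence $\nu(K_\alpha)=1$ by Lemma~\ref{gyak-lem}. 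Non-compactness of $\mathcal M(K_\gamma)$ is then handled by showing $J_\gamma$ is a good rate function via Lemma~\ref{closed}, and the Contraction Principle pushes the LDPs forward to $\mathcal M(\Sigma_D)$. If you want to salvage your intrinsic approach, you need a substitute for this embedding mechanism; a purely specification-based route inside $\Sigma_D$ seems blocked for exactly the reasons the paper cites.
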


%\begin{thm}The sequence $(\widetilde\mu_n)_{n=1}^\infty$ converges in the weak* topology to $1/2\cdot\delta_{\mu_\alpha}+1/2\cdot\delta_{\mu_\beta}$. \end{thm}

%For each $n\in\mathbb N$ define $\mu_n\in \mathcal M(\Sigma_D,\sigma)$ by \[\mu_n(\cdot)=\frac{\#\{x\in{\rm Per}_n(\sigma)\colon V_n(\sigma,x)\in\cdot\} }{\#{\rm Per}_{n}(\sigma)}.\]

A major difficulty in the ergodic theory of non-Markov subshifts is the lack of the specification property.
 This property, introduced by Bowen \cite{Bow71}, allows us to glue together a given collection of orbit segments appropriately to form one orbit. The specification property is useful in the construction of 
 invariant measures with prescribed properties. 
 For more details, see \cite{You90}.
 
 The specification property is rarely expected for non-Markov shifts: Schmeling \cite{Sch97} proved that almost every $\beta$-shift
does not have the specification property.
A breakthrough was made by Pfister and Sullivan \cite{PS05}, who introduced a certain weak form of specification that holds for any $\beta$-shift, and with this property established the level-2 LDP with a convex rate function for distributions of empirical measures with respect to the unique maximal entropy measure of any $\beta$-shift. 
%For subsequent developments, % on the level-2 LDP for subshifts with weak specification properties, 
%see \cite{CTY}. %In a broader context, various weak specification properties have been introduced so far and their consequences have been investigated.
%For a panorama of a line of this development, see \cite{KLO16}. 
%The Dyck shift \cite{Kri74} has its origin in the theory of languages \cite{AU68}. 
%There is a universal formal language due to W. Dyck. The Dyck shift is the symbolic dynamics generated by that language.
%For the subshifts in \cite{} with weak specification, the measure of maximal entropy is unique and the rate function is convex. 
The non-convexity of the rate function in Theorem~\ref{break-Dyck} is %an indication 
a manifestation of a severe lack of specification for the Dyck shift.
For topologically mixing Markov shifts, the level-2 rate function for periodic points is convex \cite{Kif94}. 
%any argument relying on weak specification properties cannot be directly applied.
%does not apply to the Dyck shift.
%an introduction of new ideas is necessary.

To provide a formula for the rate function $I$ in Theorem~\ref{break-Dyck}, 
% we need to delve a little into the structure of the Dyck shift. 
for each $i\in\mathbb Z$ define a function $H_i\colon \Sigma_D\to\mathbb Z$ by
      \[H_i(x)=\begin{cases}\sum_{j=0}^{i-1} \sum_{k=1}^{M}(\delta_{{\alpha_k},x_j}-\delta_{\beta_k,x_j})&\text{ for }  i\geq1,\\\sum_{j=i}^{-1} \sum_{k=1}^{M}(\delta_{{\beta_k},x_j}-\delta_{\alpha_k,x_j})&\text{ for } i\leq -1,\\    0&\text{ for }i=0.\end{cases}\]
%    Note that $H_i(x)$ for $i\geq1$ equals
 %     the difference of the number of symbols in $D_\alpha$ and that in $D_\beta$ in the sequence $x_0x_1\cdots x_{i-1}$.
For $i$, $j\in\mathbb Z$ distinct,  let \[\{H_i=H_j\}=\{x\in\Sigma_D\colon H_i(x)=H_j(x)\},\] and define
    \[\label{3-sets}\begin{split}
A_\alpha&=\left\{x\in\Sigma_D\colon
\lim_{i\to\infty}H_i(x)
=\infty\ \text{ and } \ \lim_{i\to-\infty}H_i(x)=-\infty\right\};\\
A_\beta&=\left\{x\in\Sigma_D\colon
\lim_{i\to\infty}H_i(x)=-\infty\ \text{ and } \
\lim_{i\to-\infty}H_i(x)=\infty\right\};\\
A_0&=\bigcap_{i=-\infty }^{\infty}\left(\left(\bigcup_{j=1}^\infty\{ H_{i+j}=H_i\}\right)\cap\left(\bigcup_{j=1}^\infty\{ H_{i-j}=H_i\}\right)\right).\end{split}\]
It is not hard to show that all these three sets are pairwise disjoint, shift-invariant, dense 
Borel subsets of $\Sigma_D$. Each ergodic element of $\mathcal M(\Sigma_D,\sigma)$ gives measure $1$ to one of these three sets (see Lemma~\ref{trichotomy}).   
One of the two ergodic maximal entropy measures gives measure $1$ to $A_\alpha$, and the other gives measure $1$ to $A_\beta$. We denote these two measures by $\mu_\alpha$ and $\mu_\beta$ so that
$\mu_\alpha(A_\alpha)=1$ and $\mu_\beta(A_\beta)=1$ (see $\S$\ref{MME-sec} for details).
For each $\gamma\in\{\alpha,\beta\}$
let
\[\mathcal M_{\gamma0}(\Sigma_D,\sigma)=\{\mu\in \mathcal M(\Sigma_D,\sigma)\colon \mu(A_\gamma\cup A_0)=1 \}.\]
 %For each $\gamma\in\{\alpha,\beta,0\}$ let \[\mathcal M_{\gamma}(\Sigma_D,\sigma)=\{\mu\in \mathcal M(\Sigma_D,\sigma)\colon \mu(A_\gamma)=1 \}.\]
%For each $\gamma\in\{\alpha,\beta\}$ we put \[\mathcal M_{\gamma0}(\Sigma_D,\sigma)=\mathcal M_{\gamma}(\Sigma_D,\sigma)\cup\mathcal M_{0}(\Sigma_D,\sigma).\]
The next theorem complements Theorem~\ref{break-Dyck}.

 \begin{thm}\label{rate-Dyck}
The rate function $I\colon \mathcal M(\Sigma_D)\to[0,\infty]$
of the LDP for $(\widetilde\mu_n)_{n=1}^\infty$ in Theorem~\ref{break-Dyck}
is  given by \[I(\mu)=\begin{cases}\log(M+1)-h(\mu)&\text{ if }\mu\in \mathcal M_{\alpha0}(\Sigma_D,\sigma)\cup\mathcal M_{\beta0}(\Sigma_D,\sigma),\\
    \infty&\text{ otherwise}.\end{cases}\]
\end{thm}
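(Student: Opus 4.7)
The plan is to use the uniqueness of the rate function granted by Theorem~\ref{break-Dyck} (available since $\mathcal M(\Sigma_D)$ is metrizable) and verify that the formula in the statement fulfils the LDP bounds for $(\widetilde\mu_n)_{n=1}^\infty$. Combined with the standard asymptotic $\tfrac1n\log\#\mathrm{Per}_n(\sigma)\to\log(M+1)$, the core of the task reduces to the two-sided estimate
\[\lim_{\mathcal G\downarrow\{\mu\}}\limsup_{n\to\infty}\frac{1}{n}\log\#\{x\in\mathrm{Per}_n(\sigma):V_n(\sigma,x)\in\mathcal G\}=\begin{cases}h(\mu)&\text{if }\mu\in\mathcal M_{\alpha0}\cup\mathcal M_{\beta0},\\-\infty&\text{otherwise,}\end{cases}\]
where $\mathcal G$ ranges over weak* neighborhoods, together with its $\liminf$ counterpart.

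A trichotomy for periodic orbits, complementing Lemma~\ref{trichotomy}, is the starting point: for each $x\in\mathrm{Per}_n(\sigma)$ the piecewise linear behavior of $i\mapsto H_i(x)$ forced by periodicity places the entire bi-infinite orbit in $A_\alpha$, $A_\beta$ or $A_0$ according as $H_n(x)$ is positive, negative or zero. Hence every $V_n(\sigma,x)$ belongs to $\mathcal M_{\alpha0}\cup\mathcal M_{\beta0}$. To derive $I(\mu)=\infty$ for $\mu$ outside this union, I aim to produce, for each such $\mu$, a pair of bounded continuous observables (built from truncations of $H_{\pm i}$ for large $i$, smoothed so as to be essentially constant on orbits of pure $A_\alpha$- or $A_\beta$-type) whose joint $\mu$-integrals lie strictly outside the set of values attained on $\mathcal M_{\alpha0}\cup\mathcal M_{\beta0}$. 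This furnishes a weak* neighborhood of $\mu$ disjoint from the support of every $\widetilde\mu_n$, whence $I(\mu)=\infty$ via the LDP. The reverse inequality $I(\mu)\geq\log(M+1)-h(\mu)$ for $\mu\in\mathcal M_{\alpha0}\cup\mathcal M_{\beta0}$ then follows from the standard upper bound
\[\limsup_n\frac{1}{n}\log\#\{x\in\mathrm{Per}_n(\sigma):V_n(\sigma,x)\in\mathcal G\}\leq\sup_{\nu\in\overline{\mathcal G}\cap\mathcal M(\Sigma_D,\sigma)}h(\nu)\]
on expansive systems, combined with upper semicontinuity of the entropy on subshifts as $\mathcal G$ shrinks to $\{\mu\}$.

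The principal obstacle is the matching lower bound: for every $\mu\in\mathcal M_{\alpha0}$ (the $\beta$ case being symmetric), every neighborhood $\mathcal G$ of $\mu$ and every $\epsilon>0$, one must exhibit at least $e^{n(h(\mu)-\epsilon)}$ periodic points of period $n$ with empirical measure in $\mathcal G$. Since $\sigma$ lacks Bowen's specification on $\Sigma_D$, one cannot glue arbitrary orbit pieces. My plan is to decompose $\mu$ into ergodic components, each supported in $A_\alpha\cup A_0$ by hypothesis, approximate each of them by exponentially many $(n,\epsilon)$-separated orbit segments via Katok's formula, and concatenate these segments into a single admissible $n$-periodic block. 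Admissibility of the concatenations is preserved by inserting short padding strings consisting only of $\alpha_i$-symbols, which never create a forbidden $\beta_j\alpha_k$-mismatch in the reduction and force the resulting orbit to remain in $A_\alpha\cup A_0$; the total padding length can be kept sublinear in $n$, so that the exponential count $e^{n(h(\mu)-\epsilon)}$ survives. Engineering the padding so that the resulting empirical measures actually fall in $\mathcal G$, and closing each orbit up periodically with only a bounded number of extra symbols, is the technical heart of the argument.
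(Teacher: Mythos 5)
Your overall architecture (reduce to a two-sided count of periodic points with prescribed empirical measure, split into $\mathcal M_{\alpha0}\cup\mathcal M_{\beta0}$ via the sign of $H_n$, use closedness of $\mathcal M_{\gamma0}$ for the infinite-rate part and an entropy/expansiveness bound for the upper bound) is reasonable and the periodic-orbit trichotomy is correct. But your approach is genuinely different from the paper's, and the lower bound sketch has a real gap.

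The paper does \emph{not} glue orbit pieces inside $\Sigma_D$ at all. It transfers the whole problem to the two \emph{full} shifts $\Sigma_\alpha,\Sigma_\beta$ via Krieger's Borel conjugacies $\psi_\gamma\colon K_\gamma\to B_\gamma$, proves the LDP there (Proposition~\ref{LDP-rest0}) where admissibility of concatenations is automatic, handles the non-compactness of $K_\gamma$ by entropy-density (Lemma~\ref{e-dense-lem}) plus a tilt that forces $\nu([\beta])<1/2$ so that Lemma~\ref{gyak-lem} applies, and finally pushes the LDPs back to $\mathcal M(\Sigma_D)$ by the Contraction Principle; Theorem~\ref{rate-Dyck} then follows from Theorem~\ref{theorema} using the decomposition~\eqref{mu-tik}. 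The entire point of that route is to avoid ever having to concatenate two $\Sigma_D$-admissible blocks.

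The gap in your argument is exactly at that concatenation. It is \emph{false} that interposing a block of left brackets ``never creates a forbidden $\beta_j\alpha_k$-mismatch.'' If $u$ is an admissible block and $v$ is an admissible block whose reduced form begins with $\beta_2$, then $u\,\alpha_1\,v$ is \emph{not} admissible: the leading $\beta_2$ of $v$ must close against the nearest unmatched left bracket, which is the inserted $\alpha_1$, and $\alpha_1\cdot\beta_2=0$. So a \emph{fixed} left-bracket padding string does nothing to guarantee admissibility; the padding must be \emph{adapted} to the unmatched right-bracket prefix of the next block (and at the wrap-around, to that of the first block), and its length must control the size of that prefix, which you have not estimated. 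You also have not argued why the resulting \emph{bi-infinite periodic extension} lies in $\Sigma_D$ --- for a word $w$ with $H_{|w|}(w)>0$, closing up periodically requires the excess right brackets of each copy to match the correct labels of the excess left brackets of the previous copy across arbitrarily many periods, which is a constraint on the labelling, not merely on the height profile. These are precisely the specification-type obstructions that the Dyck shift is known to exhibit, and exactly what Krieger's coding into $\Sigma_\alpha$ and $\Sigma_\beta$ circumvents. As written, your lower-bound step does not go through.

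A smaller point: the ``bounded continuous observables built from truncations of $H_{\pm i}$'' step is unnecessary once you know $\mathcal M_{\alpha0}\cup\mathcal M_{\beta0}$ is weak*-closed (the paper proves this in Lemma~\ref{closed} using only that $[D_\beta]$ and $[D_\alpha]$ are clopen); combined with your correct observation that all $V_n(\sigma,x)$ for periodic $x$ land in this set, the $I=\infty$ assertion is immediate.
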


We prove Theorems~\ref{break-Dyck} and \ref{rate-Dyck} together, %by establishing two LDPs for different sequences of Borel probability measures on $\mathcal M(\Sigma_D)$ constructed from subsets of $\bigcup_{n=1}^\infty{\rm Per}_n(\sigma)$, and unifying them at the end.
by unifying two LDPs for different sequences of Borel probability measures on $\mathcal M(\Sigma_D)$ constructed from subsets of $\bigcup_{n=1}^\infty{\rm Per}_n(\sigma)$.
     For each $n\in\mathbb N$ we decompose ${\rm Per}_n(\sigma)$ into three subsets:
\[\begin{split}
{\rm Per}_{\alpha,n}(\sigma)&=\{x\in {\rm Per}_n(\sigma)\colon  H_n(x)>0\};\\
{\rm Per}_{\beta,n}(\sigma)&=\{x\in {\rm Per}_n(\sigma)\colon  H_n(x)<0\};\\
{\rm Per}_{0,n}(\sigma)&=\{x\in {\rm Per}_n(\sigma) \colon H_n(x)=0\}.
\end{split}\]
%A zeta function defined by these periodic points was considered in \cite{Kel91}.
%Following Hamachi and Inoue \cite{HI05},
%we say periodic points in $\bigcup_{n\in\mathbb N}{\rm Per}_{\alpha,n}(\sigma)$ (resp. $\bigcup_{n\in\mathbb N}{\rm Per}_{\beta,n}(\sigma)$) have {\it negative multipliers} (resp. {\it positive multipliers}), and call periodic points in 
%  $\bigcup_{n\in\mathbb N}{\rm Per}_{0,n}(\sigma)$ {\it neutral}. 
%We exclude from further consideration all neutral periodic points.
%A zeta function defined by these periodic points was considered in \cite{Kel91}.
Following Hamachi and Inoue \cite{HI05}, we say periodic points in $\bigcup_{n=1}^\infty{\rm Per}_{\alpha,n}(\sigma)$ (resp. $\bigcup_{n=1}^\infty{\rm Per}_{\beta,n}(\sigma)$) have {\it negative} (resp. {\it positive}) {\it multiplier},
and say periodic points in 
  $\bigcup_{n=1}^\infty{\rm Per}_{0,n}(\sigma)$ are {\it neutral}. 
The first two tie in with the two ergodic maximal entropy measures: 
%Init was proved that 
periodic points with negative (resp. positive) multiplier are distributed with respect to $\mu_\alpha$ (resp. $\mu_\beta$) as their periods tend to infinity \cite{T24}.

For each $n\in\mathbb N$,
define Borel probability measures  $\widetilde\mu_{\alpha0,n}$ and $\widetilde\mu_{\beta,n}$ on $\mathcal M(\Sigma_D)$ by 
\[\widetilde\mu_{\alpha0,n}(\cdot)=
\frac{\#\{{x\in {\rm Per}_{\alpha0,n}(\sigma ) \colon V_{n}(\sigma,x)}\in\cdot\}}{\#{\rm Per}_{\alpha0,n}(\sigma ) }\]
and
\[\widetilde\mu_{\beta,n}(\cdot)=
\frac{\#\{{x\in {\rm Per}_{\beta,n}(\sigma )\colon V_{n}(\sigma,x)}\in\cdot\}}{\#{\rm Per}_{\beta,n}(\sigma ) }\]
respectively, 
where
${\rm Per}_{\alpha0,n}(\sigma )={\rm Per}_{\alpha,n}(\sigma )\cup{\rm Per}_{0,n}(\sigma )$.

%We define an involution $\iota\colon D\to D$ by $\iota (\alpha_k)=\beta_k$ and $\iota(\beta_k)=\alpha_k$ for $k=1,\ldots,M$, and $\iota_D\colon \Sigma_D\to D^\mathbb Z$ by $\iota_D((\omega_i)_i)=(\rho(\omega_{-i}))_i.$ $\iota_D$ is a homeomorphism on $\Sigma_D$. $\iota_D(A_\alpha)=A_\beta$, $\iota_D(A_0)=A_0$, etc. $\iota_D^2={\rm Id}_{\Sigma_D}$ $\iota_D\circ\sigma=\sigma^{-1}\circ\iota_D$

 \begin{thm}\label{theorema}  %For each $\gamma\in\{\alpha,\beta\}$,  
 The LDP holds for  $(\widetilde\mu_{\alpha0,n})_{n=1 }^\infty$ and $(\widetilde\mu_{\beta,n})_{n=1 }^\infty$ %with the common rate function $I_\gamma\colon \mathcal M(\Sigma_D)\to[0,\infty]$  
 with the rate functions $I_\alpha$ and $I_\beta$ respectively given by \[I_\alpha(\mu)=\begin{cases}\log(M+1)-h(\mu)&\text{ if }\mu\in \mathcal M_{\alpha0}(\Sigma_D,\sigma),\\
    \infty&\text{ otherwise}\end{cases}\]
    and
    \[I_\beta(\mu)=\begin{cases}\log(M+1)-h(\mu)&\text{ if }\mu\in \mathcal M_{\beta0}(\Sigma_D,\sigma),\\
    \infty&\text{ otherwise.}\end{cases}\]
 \end{thm}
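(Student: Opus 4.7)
The plan is to prove the LDP for $(\widetilde\mu_{\alpha0,n})_{n=1}^\infty$; the argument for $(\widetilde\mu_{\beta,n})_{n=1}^\infty$ is analogous, by the $\alpha$--$\beta$ symmetry of $\Sigma_D$ together with the observation that the neutral orbits form an exponentially negligible discrepancy between the two setups. Throughout I would rely on the key structural fact that for each $x\in{\rm Per}_{\alpha0,n}(\sigma)$, $H_n$ is constant along the orbit of $x$ and non-negative, forcing the entire orbit into $A_\alpha\cup A_0$, so that $V_n(\sigma,x)\in\mathcal M_{\alpha0}(\Sigma_D,\sigma)$.

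As a preliminary I would establish the asymptotic $\frac{1}{n}\log\#{\rm Per}_{\alpha0,n}(\sigma)\to\log(M+1)$, which supplies the normalization in the rate function. For the upper bound I would reduce by compactness of $\mathcal M(\Sigma_D)$ to local estimates
\[
\limsup_{n\to\infty}\frac{1}{n}\log\#\{x\in{\rm Per}_{\alpha0,n}(\sigma)\colon V_n(\sigma,x)\in U\}\le h(\mu)+\varepsilon
\]
for sufficiently small open $U\ni\mu$ when $\mu\in\mathcal M_{\alpha0}(\Sigma_D,\sigma)$: this is the standard Katok-type counting by cylinder frequencies and Stirling. For $\mu\notin\mathcal M_{\alpha0}(\Sigma_D,\sigma)$ I would first rule out measures with negative $\alpha$--$\beta$ drift via the continuous observable $\phi$ with $\phi(x)=1$ on $D_\alpha$ and $\phi(x)=-1$ on $D_\beta$, using $\int\phi\,dV_n(\sigma,x)=H_n(x)/n\ge0$ on ${\rm Per}_{\alpha0,n}(\sigma)$. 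The remaining residual measures (zero drift but positive mass on $A_\beta$) would be handled by a finer observable detecting the running minimum of $H_i$ on windows of bounded length, yielding either eventual emptiness of $U$ or a genuine entropy deficit.

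The lower bound is the principal technical step. Given an ergodic $\mu\in\mathcal M_{\alpha0}(\Sigma_D,\sigma)$ and a neighborhood $U\ni\mu$, I would construct at least $\exp(n(h(\mu)-o(1)))$ distinct periodic orbits in ${\rm Per}_{\alpha0,n}(\sigma)$ whose empirical measures lie in $U$. The idea is to lift $\mu$ to a transitive countable Markov extension of $\Sigma_D$ arising from a Krieger-type cover restricted to the $\alpha0$-side; this cover enjoys the specification property, so a classical closing argument produces many closed orbits there approximating the lifted measure at the exponential rate $h(\mu)$. Projecting back to $\Sigma_D$ and verifying directly from the structure of the gluing that the projected periodic orbits carry $H_n\ge0$ yields the required orbits in ${\rm Per}_{\alpha0,n}(\sigma)$. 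A routine convex-approximation step then extends the bound from ergodic $\mu$ to all of $\mathcal M_{\alpha0}(\Sigma_D,\sigma)$, and combining upper and lower bounds identifies the rate function as $I_\alpha$.

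The main obstacle will be this lower-bound construction under the multiplier constraint. Specification fails on $\Sigma_D$ itself, and while the Krieger-type cover restores it, one must coordinate both the empirical distribution and the $H_n$-sign simultaneously: gluings on the cover have to occur along transitions whose projections preserve the $\alpha$-$\beta$ balance, and the count of admissible concatenations must still attain the exponential rate $h(\mu)$ after this side-constraint is imposed. A further delicate point is the case $\mu\in\mathcal M_0\subset\mathcal M_{\alpha0}$, where the approximating periodic orbits must be built so that $H_n$ returns exactly to a non-negative value at period $n$; this forces the use of closed loops on the cover whose projections reside on the balanced stratum, and is where careful combinatorial bookkeeping will be required.
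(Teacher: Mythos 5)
Your strategy differs fundamentally from the paper's, and the difference matters. The paper does not attempt to verify an LDP directly inside $\Sigma_D$; instead it transports everything to two full shifts $\Sigma_\alpha$, $\Sigma_\beta$ on $M+1$ symbols via Krieger's shift-commuting homeomorphisms $\psi_\gamma\colon K_\gamma\to B_\gamma\subset\Sigma_D$. It proves an LDP for empirical-measure distributions on $\mathcal M(K_\gamma)$ (Proposition~\ref{LDP-rest0}), where the ambient dynamics is a full shift so that cylinder counting, entropy density (Lemma~\ref{e-dense-lem}), and Shannon--McMillan--Breiman are all available for free, and then pushes forward via the Contraction Principle. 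Crucially, the periodic points with non-negative multiplier are exactly the $\phi_\alpha$-images of full-shift periodic points with $\nu([\beta])\le 1/2$, and the difficulty you flag --- coordinating the empirical distribution with the sign constraint on $H_n$ --- is resolved not by any closing argument on $\Sigma_D$ but by a padding trick: appending $\lfloor 3\varepsilon n\rfloor$ copies of $\alpha_1$ to each admissible word pushes $\nu([\beta])$ strictly below $1/2$, so Lemma~\ref{gyak-lem} forces membership in $K_\alpha$ at an $O(\varepsilon)$ cost in entropy rate.

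The lower bound is where your plan has a genuine gap. You propose lifting $\mu$ to a ``transitive countable Markov extension of $\Sigma_D$ arising from a Krieger-type cover restricted to the $\alpha0$-side'' and running a specification/closing argument there. The Dyck shift is not sofic, so such a cover is necessarily countably infinite; you give no construction, no reason it should satisfy specification (countable-state Markov shifts generally do not), and no closing lemma. You yourself name the coordination of $H_n\ge 0$ with the closing loops as ``the main obstacle'' and ``where careful combinatorial bookkeeping will be required,'' but you do not carry it out --- this is precisely the content, not the residue, of the proof. The paper avoids the cul-de-sac entirely by pulling periodic points through $\phi_\alpha$ into a genuine full shift, where closing is trivial; that reduction is the structural insight your proposal is missing.

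A secondary gap lies in the upper bound. The drift observable $\int\phi\,dV_n(\sigma,x)=H_n(x)/n\ge 0$ only excludes limits with strictly negative drift. You then gesture at ``a finer observable detecting the running minimum of $H_i$ on windows of bounded length'' for the remaining measures, but no bounded-window observable detects membership in $A_\beta$, which is an asymptotic condition, and you give no argument that an entropy deficit must ensue. In the paper this point is handled by the closedness of $\mathcal M_{\gamma0}(\Sigma_D,\sigma)$ (Lemma~\ref{closed}) together with goodness of the rate functions $J_\gamma$ and the Contraction Principle.
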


In \cite{Kri74}, Krieger constructed the maximal entropy measures $\mu_\alpha$, $\mu_\beta$ by
embedding into $\Sigma_D$ two full shifts $\Sigma_\alpha$, $\Sigma_\beta$ 
over different alphabets consisting of $M+1$ symbols. 
%The two MMEs are obtained as the embedded images of the Bernoulli measures on each of the full shifts associated with the probability vector $(1/(M+1),\ldots,1/(M+1))$. 
The point is that 
Krieger's embeddings are not proper.
A proper embedding of a subshift into another means a continuous, one-to-one, not onto map
commuting with the shifts.
By the result of Hamachi and Inoue \cite{HI05}, the full shift on $M+1$ symbols cannot be properly embedded into $\Sigma_D$.

Critical structures %components 
in Krieger's construction are shift-invariant dense Borel subsets $K_\alpha\subsetneq\Sigma_\alpha$, 
$K_\beta\subsetneq\Sigma_\beta$ and shift-commuting homeomorphisms\footnote{As shown in Lemma~\ref{no-extension}, these homeomorphisms cannot be extended to continuous maps on the full shift spaces.} $K_\alpha\hookrightarrow\Sigma_D$, 
$K_\beta\hookrightarrow\Sigma_D$ (see $\S$\ref{DM-str}).
%The Bernoulli measures give measure $1$ to $K_\alpha$ and $K_\beta$ respectively.
We exploit these structures
to prove Theorem~\ref{theorema}.
We first establish the LDP with good rate functions on the spaces $\mathcal M(K_\alpha)$ and $\mathcal M(K_\beta)$ (see Proposition~\ref{LDP-rest0})
by extending Takahashi's argument for Markov shifts \cite{Tak84}. At this point we must work out the non-compactness 
of 
$\mathcal M(K_\alpha)$ and $\mathcal M(K_\beta)$, but no tightness argument on sequences of probability measures is necessary. We use the following simple argument that relies on the {\it entropy density} of the full shift (see \cite{EKW94} and Lemma~\ref{e-dense-lem}):
for each $\gamma\in\{\alpha,\beta\}$, any shift-invariant measure in $\mathcal M(\Sigma_\gamma)\setminus\mathcal M(K_\gamma)$  arising in our constructions can be approximated in the weak* topology by ergodic elements of $\mathcal M(K_\gamma)$ with similar entropy.
We then apply the Contraction Principle to transfer the LDPs on $\mathcal M(K_\alpha)$ and $\mathcal M(K_\beta)$
to that on $\mathcal M(\Sigma_D)$ via Krieger's homeomorphisms to deduce Theorem~\ref{theorema}.
Unifying the two LDPs in Theorem~\ref{theorema} we obtain Theorems~\ref{break-Dyck} and \ref{rate-Dyck}.

\subsection{Level-1 LDPs}
Applying
the Contraction Principle (see Lemma~\ref{CP}) to the level-2 LDP in Theorem~\ref{break-Dyck}, one obtains level-1 LDPs with various continuous observables. 
Precise statements require additional definitions. %that reflect the symmetry in the Dyck shift. 
Given a continuous function $f\colon\Sigma_D\to\mathbb R$, for each $\gamma\in\{\alpha,\beta\}$
define $I_{f,\gamma}\colon\mathbb R\to[0,\infty]$ by 
\[I_{f,\gamma}(t)=\inf\left(\left\{I(\mu)\colon\mu\in\mathcal 
 M_{\gamma0}(\Sigma_D,\sigma),\ \int f{\rm d}\mu=t \right\}\cup\{\infty\}\right),\]
%\[I_{f,\alpha}(t)=\inf\left\{I(\mu)\colon\mu\in\mathcal   M(\Sigma_D)\setminus\mathcal M_{\beta}(\Sigma_D,\sigma),\ \int f{\rm d}\mu=t \right\},\] \[I_{f,\beta}(t)=\inf\left\{I(\mu)\colon\mu\in\mathcal  M(\Sigma_D)\setminus\mathcal M_{\alpha}(\Sigma_D,\sigma),\ \int f{\rm d}\mu=t \right\},\]
 and we put
\[L_{f,\gamma}=\left\{t\in\mathbb R\colon \int f{\rm d}\mu=t\text{ for some }\mu\in\mathcal M_{\gamma0}(\Sigma_D,\sigma)\right\}.\]
Note that: $L_{f,\gamma}$ is connected and contains $\int f{\rm d}\mu_\gamma$;
 $I_{f,\gamma}(t)$ is finite if and only if $t\in L_{f,\gamma}$;
 $I_{f,\gamma}(t)=0$ if and only if $t=\int f{\rm d}\mu_\gamma$.

Besides merely continuous ones, 
 we consider observables that do not distinguish
symbols in $D_\alpha$ and do not distinguish symbols in $D_\beta$ either.
Let $\pi_D\colon x\in\Sigma_D\mapsto\pi_D(x)\in\{\alpha,\beta\}^\mathbb Z$ denote the natural projection: $\pi_D(x)$ is obtained by replacing all $\alpha_k$, $1\leq k\leq M$ in $x$ by $\alpha$ and $\beta_k$, $1\leq k\leq M$ in $x$ by $\beta$.
Let $\mathscr{F}$ denote the set of continuous functions $f\colon\Sigma_D\to\mathbb R$ for which 
there exists a H\"older continuous function $\bar f\colon\{\alpha,\beta\}^\mathbb Z\to\mathbb R$ with $f=\bar f\circ\pi_D$.
Note that elements of $\mathscr{F}$ are H\"older continuous.
%In terms of the bracket representation of the Dyck shift,
%Functions in $\mathscr{F}$ do not distinguish symbols in $D_\alpha$ and do not distinguish symbols in $D_\beta$ either.
%left and right brackets. 
%and they only depend on 
%The Contraction Principle (see Lemma~\ref{CP}) applied to the level-2 LDP in Theorem~\ref{break-Dyck} and a little further analysis yield the following statement.
\begin{thm}[the level-1 LDP]
\label{level1-thm}For any continuous function $f\colon\Sigma_D\to\mathbb R$, the LDP holds for  $(\#\left\{x\in{\rm Per}_{n}(\sigma)\colon (1/n)S_nf(x)\in \cdot\right\}/\#{\rm Per}_{n}(\sigma))_{n=1 }^\infty$.
The rate function $I_f\colon\mathbb R\to[0,\infty]$ is given by
\[I_{f}(t)=\inf\left\{I(\mu)\colon\mu\in\mathcal M(\Sigma_D),\ \int f{\rm d}\mu=t\right\}=\min\{I_{f,\alpha}(t),I_{f,\beta}(t)\}.\]
Moreover the following statements hold:
\begin{itemize}
    \item[(a)] for each $\gamma\in\{\alpha,\beta\}$, 
$I_{f,\gamma}$ is convex and continuous on $L_{f,\gamma}$;
%and $I_{f,\gamma}(t)=0$ if and only if $t=\int f{\rm d}\mu_\gamma$;
\item[(b)]
if $f\in\mathscr{F}$, $\gamma\in\{\alpha,\beta\}$ and $L_{\gamma,f}$ is not a singleton, then %$I_{f,\gamma}$ 
$I_f$ is real-analytic and strictly convex 
on an open interval containing $\int f{\rm d}\mu_\gamma$. 
%the interior of $L_{f,\gamma}$, and
%$I_{f,\gamma}''(t)>0$ $I_{f,\gamma}'(t)\searrow-\infty$ or   $I_{f,\gamma}'(t)\searrow+\infty$  according as $t$ approaches the left or the right boundary point of $L_{f,t}$.
\end{itemize}\end{thm}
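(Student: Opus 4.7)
The level-1 LDP itself follows by applying the Contraction Principle (Lemma~\ref{CP}) to the level-2 LDP of Theorem~\ref{break-Dyck}. The map $\Phi_f\colon\mathcal M(\Sigma_D)\to\mathbb R$, $\Phi_f(\mu)=\int f\,{\rm d}\mu$, is weak* continuous, and the push-forward $(\Phi_f)_*\widetilde\mu_n$ is precisely the law of $(1/n)S_nf$ under the uniform measure on ${\rm Per}_n(\sigma)$. The Contraction Principle produces the LDP with rate function $I_f(t)=\inf\{I(\mu)\colon\int f\,{\rm d}\mu=t\}$, which by Theorem~\ref{rate-Dyck} collapses to $\min\{I_{f,\alpha}(t),I_{f,\beta}(t)\}$ because $I\equiv\infty$ off $\mathcal M_{\alpha 0}(\Sigma_D,\sigma)\cup\mathcal M_{\beta 0}(\Sigma_D,\sigma)$.

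For part (a), I would observe that $\mathcal M_{\gamma 0}(\Sigma_D,\sigma)$ is convex (the condition $\mu(A_\gamma\cup A_0)=1$ is linear), and that $\mu\mapsto I(\mu)=\log(M+1)-h(\mu)$ is affine on it because the Kolmogorov--Sinai entropy is affine on shift-invariant measures via the ergodic decomposition. Mixing near-minimizers of $I_{f,\gamma}(t_1)$ and $I_{f,\gamma}(t_2)$ then yields $I_{f,\gamma}(\lambda t_1+(1-\lambda)t_2)\leq\lambda I_{f,\gamma}(t_1)+(1-\lambda)I_{f,\gamma}(t_2)$. Lower semicontinuity of $I_{f,\gamma}$ is obtained by applying the Contraction Principle separately to the two LDPs of Theorem~\ref{theorema}, and any convex lower semicontinuous function on $\mathbb R$ is continuous on its effective domain (interior points by convexity; boundary points by combining lsc with the convex majorant bound coming from a secant line into the domain).

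For part (b), assume $\gamma=\alpha$; the other case is symmetric. I would transfer the variational problem defining $I_{f,\alpha}$ to the full shift $\Sigma_\alpha$ via Krieger's shift-commuting homeomorphism $\iota_\alpha\colon K_\alpha\hookrightarrow\Sigma_D$. The point special to $f\in\mathscr F$ is that $\pi_D\circ\iota_\alpha\colon K_\alpha\to\{\alpha,\beta\}^{\mathbb Z}$ extends to a continuous, shift-commuting one-block factor map $\bar\pi_\alpha\colon\Sigma_\alpha\to\{\alpha,\beta\}^{\mathbb Z}$, because the $\alpha/\beta$ coloring of each image symbol is local and does not require the look-ahead needed to label the $\alpha_k$. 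Hence $g:=\bar f\circ\bar\pi_\alpha$ is H\"older continuous on the $(M+1)$-symbol full shift $\Sigma_\alpha$ and agrees with $f\circ\iota_\alpha$ on $K_\alpha$. Classical thermodynamic formalism then supplies a real-analytic, strictly convex pressure $\psi(q):=P_{\Sigma_\alpha}(qg)$, with unique Gibbs equilibrium states and $\psi'(0)=\int f\,{\rm d}\mu_\alpha$; strict convexity holds because the assumption that $L_{f,\alpha}$ is not a singleton rules out $g$ being cohomologous to a constant. The map $t\mapsto\log(M+1)+\sup_q(qt-\psi(q))$ is therefore real-analytic and strictly convex on the open interval $\{\psi'(q)\colon q\in\mathbb R\}\ni\int f\,{\rm d}\mu_\alpha$, and the entropy-density argument from Proposition~\ref{LDP-rest0} (cf.\ \cite{EKW94}) shows it coincides with $I_{f,\alpha}$ on that interval. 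Since strict convexity forces $I_{f,\alpha}$ to have a unique zero at $\int f\,{\rm d}\mu_\alpha$ in a small neighborhood, while $I_{f,\beta}>0$ there in the generic case $\int f\,{\rm d}\mu_\alpha\neq\int f\,{\rm d}\mu_\beta$ (and a comparison of analytic germs handles the coincident case, noting that the swap symmetry $\alpha\leftrightarrow\beta$ of the Dyck shift makes $I_{f,\alpha}=I_{f,\beta}$ whenever $\bar f$ is swap-symmetric), one concludes $I_f=I_{f,\alpha}$ locally and the stated regularity is inherited.

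\textbf{Main obstacle.} The principal technical difficulty is matching the clean thermodynamic formalism on the full shift $\Sigma_\alpha$ with the restricted variational problem defining $I_{f,\alpha}$ on the non-compact class $\mathcal M_{\alpha 0}(\Sigma_D,\sigma)$: because Krieger's homeomorphism $\iota_\alpha$ is defined only on the dense Borel subset $K_\alpha\subsetneq\Sigma_\alpha$, establishing that enlarging from $\iota_\alpha$-liftable measures to all of $\mathcal M_{\alpha 0}(\Sigma_D,\sigma)$ (equivalently, passing from $\mathcal M(K_\alpha,\sigma)$ to $\mathcal M(\Sigma_\alpha,\sigma)$) leaves the variational infimum unchanged relies crucially on the entropy density of ergodic full-shift measures in the spirit of Proposition~\ref{LDP-rest0}. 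A secondary nuisance is verifying that $g=\bar f\circ\bar\pi_\alpha$ is non-cohomologous to a constant on $\Sigma_\alpha$ exactly when $L_{f,\alpha}$ is not a singleton, which is needed for the strict convexity of $\psi$.
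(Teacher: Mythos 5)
Your overall strategy matches the paper's: the level-1 LDP itself from the Contraction Principle, part~(a) from affinity of entropy together with convexity and lower semicontinuity, and part~(b) from the thermodynamic formalism on the embedded full shifts $\Sigma_\gamma$, exploiting that for $f\in\mathscr F$ the observable transfers to a H\"older function $f_\gamma=\bar f\circ\pi_\gamma$ on $\Sigma_\gamma$. Two genuine differences are worth noting. First, for strict convexity you invoke the classical equivalence between $f_\gamma$ not being cohomologous to a constant and the pressure $\psi(q)=P(qf_\gamma)$ being strictly convex, whereas the paper bypasses cohomology entirely and uses the more elementary observation (Lemma~\ref{convex-lem}) that a convex analytic function which is not affine has strictly positive second derivative. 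Both work; the paper's route avoids having to verify the non-cohomology hypothesis, while yours makes the dynamical mechanism more transparent. Second, you parametrise the spectrum by the Legendre transform of pressure, whereas the paper uses a normalised spectrum $b_\gamma(t)$; these are essentially equivalent reformulations.

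However, there is a precision gap in your treatment of part~(b). You assert that the Legendre dual $\log(M+1)+\sup_q(qt-\psi(q))$ coincides with $I_{f,\alpha}$ on the \emph{whole} open interval $\{\psi'(q)\colon q\in\mathbb R\}$, via entropy density. This is not correct: the supremum defining $I_{f,\alpha}$ runs only over measures $\nu\in\mathcal M(\Sigma_\alpha,\sigma_\alpha)$ with $\nu([\beta])<1/2$ (equivalently measures giving full mass to $K_\alpha$), and this constraint genuinely cuts down the supremum for $t$ away from $\int f\,{\rm d}\mu_\alpha$. Entropy density alone does not make the constraint disappear; it only lets you perturb into the region $\nu([\beta])<1/2$ when the equality $\nu([\beta])=1/2$ is not forced by the constraint $\langle f_\alpha,\nu\rangle=t$. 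The paper's Lemmas~\ref{open-lem} and~\ref{U-lemma} isolate exactly the set $U_\gamma$ of $t$ for which the constrained and unconstrained suprema agree, and show $U_\gamma$ contains a neighborhood of $\int f\,{\rm d}\mu_\gamma$; this is precisely why the theorem is stated as a \emph{local} regularity statement. Your ``main obstacle'' paragraph gestures at this but conflates it with the non-compactness of $K_\gamma$, which is the separate issue already solved by Proposition~\ref{LDP-rest0}. A secondary loose end is your final step showing $I_f=I_{f,\alpha}$ near $\int f\,{\rm d}\mu_\alpha$: the ``comparison of analytic germs'' for the coincident case $\int f\,{\rm d}\mu_\alpha=\int f\,{\rm d}\mu_\beta$ is left unjustified (swap-symmetry of $\bar f$ is not implied by the coincidence of means), although admittedly the paper itself is terse on the same point.
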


The assumption $f\in\mathscr{F}$ in Theorem~\ref{level1-thm}(b) enables us to partially relate the two functions $I_{f,\alpha}$, $I_{f,\beta}$ to certain multifractal spectra of $(\Sigma_\alpha,\sigma_\alpha)$ and 
$(\Sigma_\beta,\sigma_\beta)$ respectively
(see Lemma~\ref{U-lemma}). We use the thermodynamic formalism \cite{Rue04} to show the analyticity of these two spectra.
Part of the spectra are transferred back to the Dyck shift via Krieger's homeomorphisms, and related to $I_f$. The asserted strict convexity follows from a general power series argument (see Lemma~\ref{convex-lem}) building on the analyticity.

%In thermodynamics, 
%the lack of smoothness in large deviations rate functions is related to phase transitions \cite{Ell85}. %deserve special attention.
In the dynamical systems setup, the lack of smoothness in large deviations rate functions should reflect singularities of the underlying dynamical system. %Manneville-Pomeau
Since $I_f$ is a superposition of the convex continuous functions $I_{f,\alpha}$ and $I_{f,\beta}$ as in  Theorem~\ref{level1-thm}, it can have points of non-differentiability
when $f\in\mathscr{F}$.
As an illustration, consider $f\in\mathscr{F}$ given by \begin{equation}\label{phi-0}f(x)=\begin{cases}0&\text{ if }x_0\in D_\alpha,\\ 1&\text{ if }x_0\in D_\beta.\end{cases}\end{equation}
%which well captures the symmetry in the Dyck shift.
%\begin{prop}\label{ratef-1}
%For $f$ in \eqref{phi-0},
The corresponding level-1 rate function $I_{f}\colon\mathbb R\to[0,\infty]$ is given by
\begin{equation}\label{ratef-1}I_f(t)=\begin{cases}t\log t+(1-t)\log(1-t)+\log((M+1)/M^{1-t})  & \text{if }t\in[0,1/2],\\
t\log t+(1-t)\log(1-t)+\log((M+1)/M^t) &\text{if }t\in(1/2, 1],\\
\infty & \text{if }t\in\mathbb R\setminus[0,1].\end{cases}\end{equation}
%\end{prop}
See \textsc{Figure}~\ref{fig1} for the graph of $I_f$ on its effective domain. At $1/2$, $I_f$ is not differentiable.
In contrast, 
 level-1 rate functions for H\"older continuous observables on topologically mixing Markov shifts are globally analytic\footnote{
Although we could not locate any literature,  this assertion can be proved, for instance using the thermodynamic formalism as we do in $\S$\ref{last-sec}.}.
A proof of \eqref{ratef-1} is given in $\S$\ref{ratepf}.

\begin{figure}%[b]
\begin{center}
\includegraphics[height=16.5cm,width=10cm]{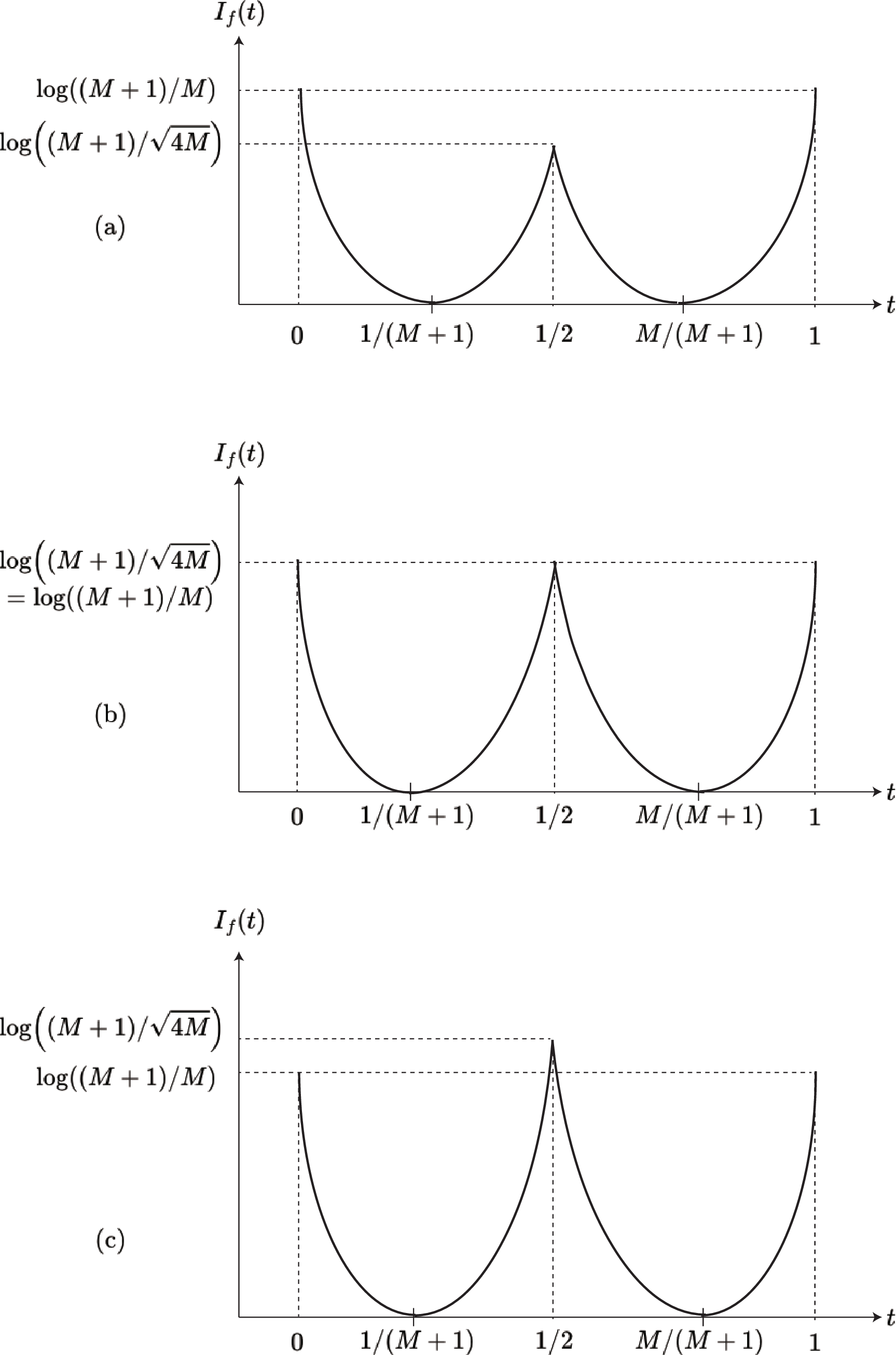}
\caption
{ The graph of the rate function $I_f$ on its effective domain $[0,1]$ for $f$ in \eqref{phi-0}; (a) $M\leq 3$; (b) $M=4$; (c) $M\geq5$.}\label{fig1}
\end{center}
\end{figure}

 \subsection{Generalizations and organization of the paper}
To keep the reasonable length of this paper we have omitted generalizations to other subshifts. 
The main results can be generalized with no essential difficulty to include the Motzkin shift, another representative of property A subshifts whose admissible words are legally aligned brackets with interspersed unit elements. For a precise definition of the Motzkin shift, see \cite{M04}.

The rest of this paper consists of two sections and one appendix. In $\S$2 we  summarize  preliminary materials including known results on the Dyck shift in \cite{HI05,Kri74}. In $\S$3 we prove the main results stated above. In Appendix~A we prove a lemma on the Dyck shift that motivates the above simple approximation argument.

 \section{Preliminaries}
This section summarizes preliminary materials needed for the proofs of the main results.
%The first two subsections are on generalities.
In $\S$\ref{weak*} we review the notion of weak* topology. In $\S$\ref{notation} we introduce basic notations on subshifts that permeate the paper, and prove a lemma on variations of Birkhoff averages of continuous functions.
The remaining subsections 
summarize known results on the Dyck shift, with main references to \cite{HI05} and \cite{Kri74}.

\subsection{Weak* topology}\label{weak*}
Let $X$ be a topological space and let $C(X)$ denote the space of real-valued continuous functions on $X$ endowed with the supremum norm $\|\cdot\|$. For $f\in C(X)$
and $\mu\in \mathcal M(X)$ write $\langle f,\mu\rangle=\int f{\rm d}\mu$.
The {\it weak* topology} on $\mathcal M(X)$ is the coarsest topology that makes  $\mu\in\mathcal M(X)\to\langle f,\mu\rangle$ continuous for any  $f\in C(X)$. In this topology, a sequence $(\mu_n)_{n=1}^\infty$ of elements of $\mathcal M(X)$ converges to $\mu\in\mathcal M(X)$ if and only if $\lim_{n\to\infty}\langle f,\mu_n\rangle=\langle f,\mu\rangle$ holds for any $f\in C(X)$. This condition is equivalent to $\lim_{n\to\infty}\langle f,\mu_n\rangle=\langle f,\mu\rangle$ for any $f\in C(X)$ that is uniformly continuous  (see \cite[Chapter~9]{Str25}).
The relative topology on $\mathcal M(Y)$
with respect to the weak* topology on $\mathcal M(X)$ coincides with the coarsest topology that makes
$\mu\in\mathcal M(Y)\to\langle f,\mu\rangle$ continuous for any $f\in C(X)$.

\begin{lemma}\label{top-lem}Let $X$ be a metric space and $Y$ a Borel subset of $X$. The weak* topology on $\mathcal M(Y)$ coincides with the relative topology on $\mathcal M(Y)$ with respect to the weak* topology on $\mathcal M(X)$.\end{lemma}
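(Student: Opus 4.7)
The plan is to show the two topologies on $\mathcal M(Y)$ coincide by checking that each contains all generators of the other.

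First I would verify the easy inclusion of the relative topology into the weak* topology on $\mathcal M(Y)$. For any $f\in C(X)$ the restriction $f|_Y$ lies in $C(Y)$, and for $\mu\in\mathcal M(Y)$ viewed as an element of $\mathcal M(X)$ supported in $Y$ one has $\la f,\mu\ra=\la f|_Y,\mu\ra$. So each generator $\mu\mapsto\la f,\mu\ra$ of the relative topology is already continuous in the weak* topology on $\mathcal M(Y)$, and the minimality clause in the definition of the weak* topology gives this inclusion.

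For the reverse inclusion I need $\phi_g\colon\mu\mapsto\la g,\mu\ra$ to be continuous in the relative topology for every $g\in C(Y)$. My plan is a two-step reduction. First, by the characterization of weak* convergence via bounded uniformly continuous functions recalled just before the lemma, the weak* topology on $\mathcal M(Y)$ is already generated by the $\phi_g$ with $g$ bounded uniformly continuous on $Y$; and any such $g$ is a uniform limit of bounded Lipschitz functions on the metric space $Y$ --- for instance the inf-convolutions $g_n(y)=\inf_{y'\in Y}(g(y')+nd(y,y'))$ are $n$-Lipschitz with $\|g-g_n\|\to 0$. The uniform estimate $|\la g,\mu\ra-\la g_n,\mu\ra|\le\|g-g_n\|$ then implies that bounded Lipschitz functions on $Y$ already generate the weak* topology on $\mathcal M(Y)$. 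Second, given a bounded $L$-Lipschitz $g$ on $Y$, the McShane formula $\tilde g(x)=\inf_{y\in Y}(g(y)+Ld(x,y))$, truncated to $[-\|g\|,\|g\|]$, produces a bounded $L$-Lipschitz extension $\tilde g\in C(X)$ with $\tilde g|_Y=g$. Hence $\phi_g=\phi_{\tilde g}|_{\mathcal M(Y)}$ is continuous in the relative topology, finishing the proof.

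The hard part, and thus the \emph{main obstacle} of the plan, is the first step of the reverse direction: using uniform density of bounded Lipschitz in bounded uniformly continuous to show these two classes generate the same topology on $\mathcal M(Y)$. This is where the metric hypothesis on $X$ is essential; the McShane extension itself is a routine construction.
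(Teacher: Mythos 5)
Your argument is correct, but it follows a different route from the paper's. Both proofs start from the same preliminary reduction recalled just before the lemma — the weak* topology on $\mathcal M(Y)$ is determined by testing against bounded \emph{uniformly} continuous functions on $Y$ — and both then extend such test functions to $X$. The paper does this in two immediate steps: a bounded uniformly continuous function on $Y$ extends uniquely to a bounded continuous function on the closure $\overline Y$ (uniform continuity into the complete space $\mathbb R$), and Tietze's theorem then extends it from the closed set $\overline Y$ to all of $X$. You instead interpose a Lipschitz-approximation step (inf-convolutions) so that you can invoke the McShane/Whitney formula, which extends Lipschitz functions from arbitrary subsets of a metric space directly, with no detour through the closure. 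Your version is slightly longer but buys you a Lipschitz (in particular uniformly continuous) extension rather than a merely continuous one, and the extension step becomes a one-liner. One small remark: the density-of-Lipschitz step you flag as the ``main obstacle'' is in fact routine (the uniform bound $|\la g,\mu\ra-\la g_n,\mu\ra|\le\|g-g_n\|$ does all the work), and is altogether bypassed by the paper's Tietze argument; if anything, the only hypothesis that is truly load-bearing in either proof is that $X$ is metric, which is used for the uniform-continuity characterization of weak* convergence and for normality (Tietze) or for McShane, respectively.
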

\begin{proof}Any bounded uniformly continuous function $Y$ can be extended (uniquely) to a continuous function on the closure of $Y$. By Tietze's theorem, this function can be extended to a bounded continuous function on $X$. \end{proof}

\subsection{Words, cylinders, bounded  variations of Birkhoff averages}\label{notation}
Let $S$ be a finite alphabet.
A finite string of elements of $S$ is called a {\it word} in $S$.
   Let $\Sigma$ be a subshift over $S$.
   Let $L(\Sigma)$ denote the set of words in $S$ that appear in some elements of $\Sigma$.
   Elements of $L(\Sigma)$ are called {\it admissible words} for $\Sigma$. For each $n\in\mathbb N$ let $L_n(\Sigma)$ denote the set of admissible words for $\Sigma$ with word length $n$.
   For $j\in\mathbb Z$, $n\in\mathbb N$, $\omega=\omega_1\cdots\omega_n\in L_n(\Sigma)$, define a {\it cylinder set}, or {\it $n$-cylinders} by
 \[\Sigma(j,\omega)=\{(x_i)_{i\in\mathbb Z}\in\Sigma\colon x_i=\omega_{i-j+1}\text{ for }j\leq i\leq  j+n-1\}.\]
 For simplicity, we write $\Sigma(0,\omega)=[\omega]$ when the context is clear.
 For each $i\in\mathbb Z$ and $A\subset L(\Sigma)$, we set \[\Sigma(i,A)=\bigcup_{a\in A}\Sigma(i,a).\]

%Let $\xi_\alpha$, $\xi_\beta$ denote the  Bernoulli measures on  $\Sigma_{\alpha}$, $\Sigma_{\beta}$ respectively associated with the probability vector  $(\frac{1}{M+1},\ldots,\frac{1}{M+1})$.
%For readability, 
%we reserve the letter $\mu$ (with subscripts) to denote elements of $M([0,1]^3)$,
%and 
%we use the letters $\omega$ and $\zeta$ to denote points in $\Sigma_D$ and $\Sigma_\gamma$ $(\gamma\in\{\alpha,\beta\})$ respectively.
%The letters  
%$\nu$ and $\xi$ %(possibly with subscripts) 
%are used to denote elements of  $M(\Sigma_D)$ and $M(\Sigma_\gamma)$ $(\gamma\in\{\alpha,\beta\})$ respectively. Let $C(\Sigma_\gamma)$ denote the spaces of real-valued continuous functions on  $\Sigma_\gamma$ respectively endowed with the supremum norm.

The next lemma is an easy consequence of the topology on subshifts, but we include a proof for completeness.
\begin{lemma}\label{variation}
Let $\Sigma$ be a subshift and let $\tau\colon\Sigma\to\Sigma$ be the left shift.
For any $f\in C(\Sigma)$ and any $\varepsilon>0$ there exists $N\in\mathbb N$ such that
for every integer $n\geq N$ and every $\omega\in L_n(\Sigma)$, we have
\[\max_{x,y\in[\omega]}\frac{1}{n}\left|\sum_{i=0}^{n-1}f(\tau^{i}x)-\sum_{i=0}^{n-1}f(\tau^{i}y)\right|< \varepsilon.\]
\end{lemma}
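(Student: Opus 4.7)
The plan is to use the standard fact that a subshift $\Sigma$, being a closed subset of the compact space $S^{\mathbb Z}$, is compact metric, so any $f \in C(\Sigma)$ is uniformly continuous and bounded. I would fix a compatible metric on $S^{\mathbb Z}$ for which agreement of two sequences on a long central window of coordinates forces them to be close, e.g.\ $d(x,y) = 2^{-k}$ where $k$ is the largest integer such that $x_i = y_i$ for all $|i| \leq k$ (with suitable conventions).

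Given $\varepsilon > 0$, first choose $\delta \in (0, \varepsilon/2)$ by uniform continuity: there exists $k \in \mathbb N$ such that whenever $x, y \in \Sigma$ satisfy $x_i = y_i$ for all $-k \leq i \leq k$, one has $|f(x) - f(y)| < \delta$. Now take any $n \geq 2k+1$, any $\omega \in L_n(\Sigma)$, and any $x, y \in [\omega]$. For each index $i$ with $k \leq i \leq n - 1 - k$, the shifted points $\tau^i x$ and $\tau^i y$ agree on coordinates $\omega_{i-k+1} \cdots \omega_{i+k+1}$, hence on coordinates $-k$ through $k$, so $|f(\tau^i x) - f(\tau^i y)| < \delta$. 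For the remaining at most $2k$ values of $i$ in $\{0,\ldots,n-1\}$, I simply bound the difference by $2\|f\|$.

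Combining these contributions gives
\[
\left|\sum_{i=0}^{n-1} f(\tau^i x) - \sum_{i=0}^{n-1} f(\tau^i y)\right| \leq (n - 2k)\delta + 2k \cdot 2\|f\| \leq n\delta + 4k\|f\|.
\]
Dividing by $n$ yields the bound $\delta + 4k\|f\|/n < \varepsilon/2 + 4k\|f\|/n$. It then suffices to choose $N \geq \max\{2k+1,\; 8k\|f\|/\varepsilon\}$ so that the second term is below $\varepsilon/2$ for all $n \geq N$; this is uniform in $\omega \in L_n(\Sigma)$ and in $x,y \in [\omega]$, completing the proof.

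There is no real obstacle here: the statement is the standard observation that the Birkhoff sum of a continuous function has bounded variation (in the sense of $o(n)$) on cylinders of length $n$, and the only small point to watch is the separation of boundary indices from interior indices, which is handled by the crude bound $2\|f\|$ on at most $2k$ terms.
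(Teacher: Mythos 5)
Your proof is correct and follows essentially the same approach as the paper: use uniform continuity to get a window length $k$, split the Birkhoff sum into $\leq 2k$ boundary terms (crudely bounded by $2\|f\|$ each) and $n-2k$ interior terms (each small because the shifted points agree on a central window of radius $k$), then take $n$ large enough to swamp the boundary contribution. The only difference is cosmetic: you make the choice of $N$ explicit, while the paper simply notes the inequality holds for $n$ sufficiently large.
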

\begin{proof}
The topology on $\Sigma$ is metrizable by the metric given by $d(x,y)=2^{-s(x,y)}$, $s(x,y)=\min\{|i|\colon i\in\mathbb Z,\ x_i\neq y_i\}$ for distinct points $x,y\in\Sigma$.
Since $f$ is uniformly continuous, for any $\varepsilon>0$ there exists $\delta>0$ such that if $x,y\in\Sigma$ and $d(x,y)<\delta$
then $|f(x)-f(y)|<\varepsilon/2$.
Fix $k\in\mathbb N$ with $2^{-k}<\delta$. Let $n\geq 2k$ be an integer and let $\omega\in L_n(\Sigma)$. 
 For all $x,y\in[\omega]$ we have 
\[\begin{split}\frac{1}{n}\left|\sum_{i=0}^{n-1}f(\tau^{i}x)-\sum_{i=0}^{n-1}f(\tau^{i}y)\right|\leq&\frac{1}{n}\sum_{i=0}^{k-1}|f(\tau^{i}x)-f(\tau^{i}y)|\\
&+\frac{1}{n}\sum_{i=k}^{n-k-1}|f(\tau^{i}x)-f(\tau^{i}y)|\\
&+\frac{1}{n}\sum_{i=n-k}^{n-1}|f(\tau^{i}x)-f(\tau^{i}y)|\\
\leq& \frac{4k\|f\|}{n}+\frac{n-2k}{2n}\varepsilon< \varepsilon,\end{split}\]where the last inequality holds if $n$ is sufficiently large. \end{proof}

\subsection{Borel embeddings of full shifts into the Dyck shift}\label{DM-str}

%\subsection{Classification of ergodic measures }\label{iso-sec}

Following \cite{Kri74}
we delve into the structure of the Dyck shift $\Sigma_D$.
Let
\[\begin{split}
B_\alpha&=\bigcap_{i=-\infty}^\infty\left(
\Sigma_D(i,D_\alpha)\cup\left(
\Sigma_D(i,D_\beta)\cap\bigcup_{j=1}^\infty \{H_{i-j+1}=H_{i+1}\}\right)\right)\end{split}\]
and
\[\begin{split}
B_\beta&=\bigcap_{i=-\infty}^\infty\left(\left(\Sigma_D(i,D_\alpha)\cap \bigcup_{j=1}^\infty \{H_i=H_{i+j}\}\right)\cup\Sigma_{D}(i,D_\beta)\right).\end{split}\]
%\[\begin{split}B_\alpha&=\bigcap_{i=-\infty}^\infty\bigcup_{k=1}^M\left(\Sigma_{D}(i,\alpha_k)\cup\left(\Sigma_{D}(i,\beta_k)\cap\bigcup_{j=1}^\infty\{ H_{i-j+1}=H_{i+1}\}\right)\right);\\ B_\beta&=\bigcap_{i=-\infty}^\infty\bigcup_{k=1}^M\left(\Sigma_{D}(i,\beta_k)\cup\left(\Sigma_{D}(i,\alpha_k)\cap \bigcup_{j=1}^\infty\{H_{i+j}=H_i\}\right)\right).\end{split}\]
 The set $B_\alpha$ (resp. $B_\beta$)
is precisely the set of sequences in $\Sigma_D$ such that any right (resp. left) bracket in the sequence is closed. %In particular we have $B_\alpha\supset (D_\alpha\cup D_0)^\mathbb Z$ and $B_\beta\supset\{\beta_1,\ldots,\beta_M,\varsigma_1,\ldots,\varsigma_N\}^\mathbb Z$.
One can check that %they are sense subsets of $\Sigma_D$, 
they are shift-invariant and satisfy
%$\sigma(B_\alpha)=B_\alpha$, 
%$\sigma(B_\beta)=B_\beta$ and
\begin{equation}\label{subset-AB}A_\alpha\cup A_0\subset B_\alpha\ \text{ and }\ A_\beta\cup A_0\subset B_\beta,\  A_\alpha\cap B_\beta=\emptyset \ \text{ and } \ A_\beta\cap B_\alpha=\emptyset.\end{equation}

 We introduce two full shifts over different alphabets consisting of $M+1$ symbols:
 \[\Sigma_\alpha=(D_\alpha\cup\{\beta\})^{\mathbb Z}\quad\text{and}\quad\Sigma_\beta=(\{\alpha\}\cup D_\beta)^{\mathbb Z}.\]
 Let $\sigma_\alpha$, $\sigma_\beta$ denote the left shifts acting on $\Sigma_\alpha$, $\Sigma_\beta$ respectively.
Define 
 $\phi_\alpha\colon B_\alpha\to \Sigma_\alpha$ 
by
\[(\phi_\alpha(x))_i=\begin{cases}
  \beta&\text{ if }x_i\in D_\beta,\\
 x_i &\text{ otherwise.}
\end{cases}\]
In other words, $\phi_\alpha(x)$ is obtained by replacing all $\beta_k$, $1\leq k\leq M$ in $x$ by $\beta$.
Clearly $\phi_\alpha$ is continuous.
Similarly, define $\phi_\beta\colon B_\beta\to \Sigma_\beta$ by
\[(\phi_\beta(x))_i=\begin{cases}
    \alpha&\text{ if }x_i\in D_\alpha,\\
    x_i&\text{ otherwise.}
\end{cases}\]
In other words, $\phi_\beta(x)$ is obtained by replacing all $\alpha_k$, $1\leq k\leq M$ in $x$ by $\alpha$.
Clearly $\phi_\beta$ is continuous too. We set
\[K_\alpha=\phi_\alpha(B_\alpha)\ \text{ and }\ K_\beta=\phi_\beta(B_\beta).\]

    For each $i\in\mathbb Z$ define $ H_{\alpha,i}\colon \Sigma_\alpha\to\mathbb Z$ by       
      \[\begin{split}H_{\alpha,i}(y)&=\begin{cases}\sum_{j=0}^{i-1} \sum_{k=1}^{M}(\delta_{{\alpha_k},y_j}-\delta_{\beta,y_j})&\text{ for }  i\geq1,\\\sum_{j=i}^{-1} \sum_{k=1}^{M}(\delta_{{\beta},y_j}-\delta_{\alpha_k,y_j})&\text{ for } i\leq -1,\\
      0&\text{ for }i=0.\end{cases}\end{split}\]
We now define $\psi_\alpha\colon K_\alpha\to D^\mathbb Z$ by
\[(\psi_\alpha(y))_i=\begin{cases}
  \beta_k&\text{ if }y_i=\beta,\ y_{s_\alpha(i,y)}=
  \alpha_k,\ 1\leq k\leq M,\\
   y_i&\text{ otherwise,}
\end{cases}\]
where \[s_\alpha(i,y)=\max\{j<i+1\colon  H_{\alpha,j}(y)= H_{\alpha,i+1}(y)\}.\]
Clearly $\psi_\alpha$ is continuous. Similarly, for each $i\in\mathbb Z$ we define 
$H_{\beta,i}\colon \Sigma_\beta\to\mathbb Z$ by 
      \[\begin{split}
            H_{\beta,i}(y)&=\begin{cases}\sum_{j=0}^{i-1} \sum_{k=1}^{M}(\delta_{{\alpha},y_j}-\delta_{\beta_k,y_j})&\text{ for }  i\geq1,\\\sum_{j=i}^{-1} \sum_{k=1}^{M}(\delta_{{\beta_k},y_j}-\delta_{\alpha,y_j})&\text{ for } i\leq -1,\\
      0&\text{ for }i=0.\end{cases}\end{split}\]
We also define $\psi_\beta\colon K_\beta\to D^\mathbb Z$ by
\[(\psi_\beta(y))_i=\begin{cases}
   \alpha_k&\text{ if }y_i=\alpha,\ y_{s_\beta(i,y)}=\beta_k,\ 1\leq k\leq M,\\
    y_i&\text{ otherwise, }
\end{cases}\]
where \[s_\beta(i,y)=\min\{j>i\colon  H_{\beta,j}(y)= H_{\beta,i}(y)\}-1.\]
Clearly $\psi_\beta$ is continuous too.

Let us summarize the objects introduced above in the following diagram:
 \[\begin{split}
 %\label{diagram}
  \xymatrix{
    & B_\alpha\cup   B_\beta \ar@<-0.8ex>[ld]_{\phi_\alpha } 
 \ar@<0.8ex>[rd]^{\phi_\beta } &  \\
 \Sigma_\alpha\supsetneq
 K_\alpha \ar[ru]_{\psi_\alpha}\ \ \   &     &  \ \ \   
 \ar[lu]^{\psi_\beta} 
 K_\beta\subsetneq
 \Sigma_\beta.}\end{split}\]
 For each $\gamma\in\{\alpha,\beta\}$,
$\psi_\gamma(K_\gamma)=B_\gamma$, $\psi_\gamma$ is a homeomorphism whose inverse is $\phi_\gamma$, and
% $\phi_\gamma|_{B_\gamma}\colon B_\gamma\to K_\gamma$ is a homeomorphism whose inverse is $\psi_\gamma$.
$\phi_\gamma\circ\sigma|B_\gamma=\sigma_\gamma\circ\phi_\gamma$. 
%and $\sigma^{-1}\circ\psi_\gamma=\psi_\gamma\circ\sigma_\gamma^{-1}|K_\gamma$.
 Since there is no proper embedding of the full shift on $M+1$ symbols into the Dyck shift 
\cite[Theorem~5.3]{HI05}, $K_\gamma$ is a proper subset of $\Sigma_\gamma$.
Since the Bernoulli measure on $\Sigma_\alpha$ (resp. $\Sigma_\beta$) associated with the probability vector $(1/(M+1),\ldots,1/(M+1))$ gives measure $1$ to $[\beta]$ (resp. $[\alpha]$) by Lemma~\ref{gyak-lem} below,
$K_\alpha$ (resp. $K_\beta$) is dense in $\Sigma_\alpha$ (resp. $\Sigma_\beta$).

\begin{lemma}[\cite{Kri74}, pp.102--103]
\label{trichotomy}
If $\mu\in \mathcal M(\Sigma_D,\sigma)$ is ergodic, then either 
$\mu(A_\alpha)=1$,  $\mu(A_\beta)=1$ or $\mu(A_0)=1$. 
\end{lemma}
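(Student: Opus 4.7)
The plan is to reduce the trichotomy to Birkhoff's ergodic theorem combined with Atkinson's recurrence theorem for zero-mean ergodic cocycles. Set
\[\tilde H(x) = \sum_{k=1}^M\left(\delta_{\alpha_k, x_0} - \delta_{\beta_k, x_0}\right)\in\{-1,+1\},\]
so that $H_n(x) = \sum_{j=0}^{n-1}\tilde H(\sigma^j x)$ for $n \geq 1$ and $H_{-n}(x) = -\sum_{j=-n}^{-1}\tilde H(\sigma^j x)$ for $n \geq 1$; in particular the cocycle identity $H_{a+b}(x) = H_a(x) + H_b(\sigma^a x)$ holds for all $a, b \in \mathbb Z$.

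Let $\mu\in\mathcal M(\Sigma_D,\sigma)$ be ergodic and put $\bar h := \int \tilde H \, d\mu$. By Birkhoff's ergodic theorem applied in forward and backward time, $H_n(x)/n \to \bar h$ and $H_{-n}(x)/n \to -\bar h$ for $\mu$-a.e. $x$. If $\bar h > 0$ then $H_n(x) \to +\infty$ and $H_{-n}(x) \to -\infty$ hold $\mu$-a.e., whence $\mu(A_\alpha) = 1$. The case $\bar h < 0$ is symmetric and yields $\mu(A_\beta) = 1$.

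The main obstacle is the critical case $\bar h = 0$, in which Birkhoff alone only gives $H_n(x) = o(n)$ and does not rule out a sublinear drift $|H_n(x)|\to\infty$. To close this gap I would invoke Atkinson's recurrence theorem: for any ergodic measure-preserving system $(X, T, \mu)$ and any $f \in L^1(\mu)$ with $\int f \, d\mu = 0$, one has $\liminf_{n \to \infty}|S_n f(x)| = 0$ for $\mu$-a.e. $x$. Since $\tilde H$ is integer-valued, applying this to $(\sigma, \tilde H)$ yields a full-measure Borel set on which $H_n(x) = 0$ for infinitely many $n > 0$; applying it to $(\sigma^{-1}, \tilde H \circ \sigma^{-1})$ yields another full-measure Borel set on which $H_{-n}(x) = 0$ for infinitely many $n > 0$. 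Let $N$ be their intersection, so $\mu(N) = 1$; then $\bigcap_{i \in \mathbb Z}\sigma^{-i} N$ remains of full $\mu$-measure. For $x$ in this intersection and any $i \in \mathbb Z$, the cocycle identity rewrites the returns at $\sigma^i x$ as $H_{i+j}(x) = H_i(x)$ and $H_{i-j}(x) = H_i(x)$ for infinitely many $j \geq 1$, which is precisely the condition $x \in A_0$. Hence $\mu(A_0) = 1$.

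The hard part is thus the $\bar h = 0$ case; once Atkinson's theorem is granted, the trichotomy is immediate since the three cases $\bar h>0$, $\bar h<0$, $\bar h=0$ exhaust all possibilities and $A_\alpha, A_\beta, A_0$ are disjoint. As a possible alternative, one could instead appeal to the conservativity of the $\mathbb Z$-skew product $(x, k) \mapsto (\sigma x, k + \tilde H(x))$ on $\Sigma_D \times \mathbb Z$ equipped with $\mu\times(\text{counting})$, which encodes the same recurrence property.
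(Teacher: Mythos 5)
The paper does not supply its own proof of this lemma; it simply cites Krieger \cite{Kri74}, pp.~102--103. Your attempt is therefore an independent proof, and it is correct.

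Concretely: you have correctly identified that $H$ is the cocycle generated by the $\pm1$-valued observable $\tilde H$, and your cocycle identity $H_{a+b}=H_a+H_b\circ\sigma^a$ (for all $a,b\in\mathbb Z$) follows from $H_{i+1}(x)-H_i(x)=\tilde H(\sigma^i x)$, which is routine to check in each sign regime of $i$. The drift cases $\bar h>0$ and $\bar h<0$ are handled by Birkhoff's theorem in both time directions (valid because ergodicity of $\sigma$ is equivalent to that of $\sigma^{-1}$). The delicate case is $\bar h=0$, and your invocation of Atkinson's recurrence theorem is exactly what is needed: $\tilde H\in L^1$ with zero mean gives $\liminf_n|H_n(x)|=0$ a.e.\ in forward time (and, after passing to $\sigma^{-1}$, in backward time as well), and integrality of $H_n$ upgrades this to $H_n(x)=0$ for infinitely many $n$. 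Saturating the resulting full-measure set under all powers of $\sigma$ and applying the cocycle identity then produces, for every $i$, infinitely many $j\geq1$ with $H_{i+j}(x)=H_i(x)$ and infinitely many $j\geq1$ with $H_{i-j}(x)=H_i(x)$, which is precisely membership in $A_0$. This yields $\mu(A_0)=1$.

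One contextual remark: Atkinson's theorem (1976) postdates Krieger's 1974 paper, so Krieger's original argument cannot have run through Atkinson verbatim; he would have established the needed recurrence of the zero-mean cocycle by a more hands-on argument (equivalently, conservativity of the $\mathbb Z$-extension, as you note in your closing aside). Your version has the advantage of isolating the recurrence ingredient in a clean, general-purpose black box; the price is reliance on a nontrivial external theorem. Either route works, and the structure of the trichotomy (drift in each direction vs.\ zero-drift recurrence) is the same.
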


%\subsection{Transport of ergodic measures}
Elements of $\mathcal M(\Sigma_\gamma)$ that give measure $1$
to $K_\gamma$ can be transported via $\psi_\gamma$ 
to elements of $\mathcal M(\Sigma_D)$.
 The next lemma, proved in \cite[Section~4]{Kri74},  gives a sufficient condition for ergodic elements of $\mathcal M(\Sigma_\gamma,\sigma_\gamma)$ to be transported
to elements of $\mathcal M(\Sigma_D)$.
For completeless
we include a proof.
%Let $\mathbbm{1}_{(\cdot)}$ denote the indicator function for a set.
%and define $E_\gamma\colon\Sigma_\gamma\to\mathbb R$ by
%\[E_\gamma=2\cdot \mathbbm{1}_{\Sigma_\gamma(0;D_\gamma)},\]
%Note that $E_\gamma$ is continuous.  If $\xi\in M(\Sigma_\alpha,\sigma_\alpha)$ and   $\int E_\alpha{\rm d}\xi>1$     (resp. $\xi\in M(\Sigma_\beta,\sigma_\beta)$ and      $\int E_\beta{\rm d}\xi>1$), then $\xi$ gives more mass to $\Sigma_\alpha(0;D_\alpha)$ (resp. $\Sigma_\beta(0;D_\beta)$) than $\Sigma_\alpha(0;\beta)$ (resp. $\Sigma_\beta(0;\alpha)$).

\begin{lemma}%[\cite{STYY}, Lemma~3.3]
\label{gyak-lem}
\
\begin{itemize}
    \item[(a)] If $\nu\in \mathcal M(\Sigma_\alpha,\sigma_\alpha)$ is ergodic and $\nu([\beta])<1/2$ then $\nu(K_\alpha)=1$.
    \item[(b)] If $\nu\in \mathcal M(\Sigma_\beta,\sigma_\beta)$ is ergodic and $\nu([\alpha])<1/2$ then $\nu(K_\beta)=1$.
%    \item[(b)] $K_\gamma$ is a dense subset of $\Sigma_\gamma$
\end{itemize}
\end{lemma}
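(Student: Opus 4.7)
The plan is to apply the Birkhoff ergodic theorem to the one-step increment of $H_{\alpha,\cdot}$ and its analogue for $H_{\beta,\cdot}$, and use the resulting drift to show that for $\nu$-almost every $y$ every ``closing'' symbol in $y$ admits a match, which places $y$ in $K_\alpha$ (resp.\ $K_\beta$).

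For part (a), I would let $f\in C(\Sigma_\alpha)$ be the observable with $f(y)=H_{\alpha,1}(y)$, so that $\int f\,\mathrm{d}\nu=1-2\nu([\beta])>0$ under the hypothesis. The Birkhoff sums of $f$ recover $H_{\alpha,i}(y)$ both for $i\geq 1$ and, via time reversal, for $i\leq -1$, so ergodicity of $\nu$ gives for $\nu$-a.e.\ $y$ that $H_{\alpha,i}(y)/i\to 1-2\nu([\beta])$ as $|i|\to\infty$; in particular $H_{\alpha,i}(y)\to+\infty$ as $i\to+\infty$ and $H_{\alpha,i}(y)\to-\infty$ as $i\to-\infty$. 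I would then show that any such $y$ lies in $K_\alpha$. Fix $i\in\mathbb Z$ with $y_i=\beta$; then $H_{\alpha,i+1}(y)=H_{\alpha,i}(y)-1$, and the limit at $-\infty$ yields some $j_0<i+1$ with $H_{\alpha,j_0}(y)<H_{\alpha,i+1}(y)$. Since $H_{\alpha,\cdot}(y)$ is integer-valued with $\pm 1$ increments and $H_{\alpha,i}(y)>H_{\alpha,i+1}(y)>H_{\alpha,j_0}(y)$, a discrete intermediate value argument produces $j\in[j_0,i)$ with $H_{\alpha,j}(y)=H_{\alpha,i+1}(y)$, so $s_\alpha(i,y)$ is well defined. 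Defining $x\in D^{\mathbb Z}$ by $x_i=y_i$ when $y_i\in D_\alpha$ and $x_i=\beta_k$ with $y_{s_\alpha(i,y)}=\alpha_k$ otherwise then gives $x\in B_\alpha$ with $\phi_\alpha(x)=y$, whence $y\in K_\alpha$.

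Part (b) is entirely symmetric: under $\nu([\alpha])<1/2$ the walk $H_{\beta,\cdot}$ has $\nu$-a.s.\ strictly negative drift and tends to $-\infty$ as $i\to+\infty$ and to $+\infty$ as $i\to-\infty$, and the same argument matches each $\alpha$ in $y$ with a $\beta_k$ to its right via $s_\beta$, producing an element of $B_\beta$ mapping to $y$. I do not foresee a genuine obstacle; the subtlest step is verifying that pointwise existence of the matching indices $s_\alpha(i,y)$ yields a Dyck-legal $x\in\Sigma_D$ (and hence $x\in B_\alpha$) rather than merely a compatible relabelling, but this follows from the LIFO nature of the matching encoded in the rule $s_\alpha(i,y)=\max\{j<i+1:H_{\alpha,j}(y)=H_{\alpha,i+1}(y)\}$.
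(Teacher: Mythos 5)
Your argument is essentially the same as the paper's, phrased in the direct rather than the contrapositive form. The paper rewrites $\Sigma_\alpha\setminus K_\alpha$ as the set of $y$ admitting a position $i$ with $y_i=\beta$ that has no leftward match, shows by the $\pm1$-step structure of $H_{\alpha,\cdot}$ that the backward frequency of $\beta$ from such an $i$ is $\geq 1/2$, and concludes via Birkhoff's theorem for $\sigma_\alpha^{-1}$ that this event is $\nu$-null when $\nu([\beta])<1/2$. You instead show that for $\nu$-a.e.\ $y$ the walk $H_{\alpha,i}(y)$ has positive drift in both time directions, so every $\beta$-position has a match by discrete IVT, placing $y$ in $K_\alpha$. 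These are two sides of the same coin; the analytic input (Birkhoff applied forward and backward under the drift condition $\nu([\beta])<1/2$) and the combinatorial input (the $\pm1$-increment walk interpretation of $H_{\alpha,\cdot}$) are identical. The subtlety you flag (that existence of all the matching indices $s_\alpha(i,y)$ really does give a Dyck-legal lift $x\in B_\alpha\subset\Sigma_D$) is also where the paper is terse: it asserts the complement formula for $\Sigma_\alpha\setminus K_\alpha$ ``by the definition of $B_\alpha$ and that of $\phi_\alpha$'' without spelling out the Dyck-legality of the lift, so your treatment is at the same level of rigor as the paper's.
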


\begin{proof}
By the definition of $B_\alpha$ and that of $\phi_\alpha$,
we have
\[\Sigma_\alpha\setminus K_\alpha=\bigcup_{i=-\infty}^\infty\bigcap_{j=1}^\infty(\Sigma_\alpha\setminus\{ H_{\alpha,i-j+1}= H_{\alpha,i+1}\})\]
where $\{ H_{\alpha,i-j+1}=H_{\alpha,i+1}\}=\{x\in\Sigma_\alpha\colon H_{\alpha,i-j+1}(x)=H_{\alpha,i+1}(x)\}$.
For each $x\in \Sigma_\alpha\setminus
K_\alpha$, 
there exists $i\in\mathbb Z$ such that
$x_i=\beta$ and
$ H_{\alpha,i-j+1}(x)\neq H_{\alpha,i+1}(x)$ for every $j\geq1$.
%Then we have $\zeta_{i-1}=\beta$, for otherwise  $ H_{\alpha,i-1}(\zeta)=H_{\alpha,i+1}(\zeta)$. If $\zeta_{i-2}\neq\beta$ then we have $\zeta_{i-3}=\beta$, for otherwise $H_{\alpha,i-3}(\zeta)= H_{\alpha,i+1}(\zeta)$. Iterating this argument gives
By induction, for every $j\geq1$ we have
\[\#\{i-j+1\leq k\leq i\colon x_k=\beta\}>\#\{i-j+1\leq k\leq i\colon x_k\neq\beta\},\]
and so
\[\liminf_{j\to\infty}\frac{1}{j}\cdot\#\{i-j+1\leq k\leq i\colon x_k=\beta\}\geq\frac{1}{2}.\]
If $\nu\in \mathcal M(\Sigma_\alpha,\sigma_\alpha)$ is ergodic and satisfies $\nu([\beta])<1/2$, then
Birkhoff's ergodic theorem applied to $(\sigma_\alpha^{-1},\nu)$ yields $\nu(K_\alpha)=1$, as required in (a). A proof of (b) is analogous. 
%Since $\int E_\gamma{\rm d}\nu_\gamma>1$, we have $\nu_\gamma(K_\gamma)=1$ by (a). Since $\nu_\gamma$ is fully supported on $\Sigma_\gamma$, $K_\gamma$ is a dense subset of $\Sigma_\gamma$.
\end{proof}

\begin{remark}\label{erg-rem}
The assumption of ergodicity cannot be dropped from Lemma~\ref{gyak-lem}.
For example, take $\nu_0\in\mathcal M(\Sigma_\alpha,\sigma_\alpha)$ 
that gives measure $1$ to $D_\alpha^\mathbb Z$.
Let $\beta^\infty$ denote the fixed point of $\sigma_\alpha$ in the cylinder set $[\beta]$. Let $0<t<1/2$ and
consider the measure $\nu=(1-t)\cdot\nu_0+t\cdot\nu_{\beta^\infty}$.
Then $\nu([\beta])=t<1/2$ and $\nu(K_\alpha)=(1-t)<1$. \end{remark}

\begin{remark}\label{view-rem}
Let $\gamma\in\{\alpha,\beta\}$.
Since $\sigma_\gamma(K_\gamma)=K_\gamma$, 
we may identify elements of $\mathcal M(K_\gamma,\sigma_\gamma|K_\gamma)$
as elements of $\mathcal M(\Sigma_\gamma,\sigma_\gamma)$.
This identification preserves entropy and ergodicity of invariant measures.
\end{remark}

\begin{lemma}\label{lem-half}\
\begin{itemize}\item[(a)] For any $\nu\in \mathcal M(K_\alpha,\sigma_\alpha|K_\alpha)$, we have $\nu([\beta])\leq1/2.$
\item[(b)] For any $\nu\in \mathcal M(K_\beta,\sigma_\beta|K_\beta)$, we have $\nu([\alpha])\leq1/2.$
\end{itemize}\end{lemma}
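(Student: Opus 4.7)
The plan is to base the proof on a structural monotonicity of the height function restricted to $K_\alpha$: for every $y\in K_\alpha$ the past infimum $i\mapsto \inf_{j\le i}H_{\alpha,j}(y)$ is independent of $i\in\mathbb Z$. Given this, I will derive (a) by applying Birkhoff's ergodic theorem to $\sigma_\alpha$ and to $\sigma_\alpha^{-1}$ simultaneously and using the resulting finiteness of the past infimum to contradict any forward drift of the walk $(H_{\alpha,n}(y))_{n\ge 0}$.

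To establish the monotonicity I will check that $\inf_{j\le i+1}H_{\alpha,j}(y)=\inf_{j\le i}H_{\alpha,j}(y)$ for each $i\in\mathbb Z$ and each $y\in K_\alpha$. If $y_i\in D_\alpha$ the walk takes an upward step, so $H_{\alpha,i+1}(y)=H_{\alpha,i}(y)+1>\inf_{j\le i}H_{\alpha,j}(y)$ and the claim is trivial; if $y_i=\beta$, then the defining condition of $K_\alpha=\phi_\alpha(B_\alpha)$ furnishes some $j\le i$ with $H_{\alpha,j}(y)=H_{\alpha,i+1}(y)$, giving $H_{\alpha,i+1}(y)\ge\inf_{j\le i}H_{\alpha,j}(y)$ in this case as well. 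This is precisely where the Dyck-type matching built into $B_\alpha$ enters the argument.

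For the main deduction I will set $\eta:=\mathbf{1}_{[D_\alpha]}-\mathbf{1}_{[\beta]}$, so that $H_{\alpha,n}(y)=\sum_{j=0}^{n-1}\eta(\sigma_\alpha^{j}y)$ and $H_{\alpha,-n}(y)=-\sum_{k=1}^{n}\eta(\sigma_\alpha^{-k}y)$ for $n\ge 1$. Birkhoff's theorem applied both to $\sigma_\alpha$ and to $\sigma_\alpha^{-1}$ produces a $\sigma_\alpha$-invariant $\bar\eta\in L^1(\nu)$ with $\int\bar\eta\,{\rm d}\nu=1-2\nu([\beta])$ and with $(1/n)H_{\alpha,n}(y)\to\bar\eta(y)$ and $(1/n)H_{\alpha,-n}(y)\to-\bar\eta(y)$ for $\nu$-a.e. $y$. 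If $\nu([\beta])>1/2$ then $\int\bar\eta\,{\rm d}\nu<0$, so I can pick $y\in K_\alpha$ with $\bar\eta(y)<0$: the backward limit forces $H_{\alpha,-n}(y)\to+\infty$, making $m^{*}(y):=\inf_{j\le 0}H_{\alpha,j}(y)$ a finite integer, while the forward limit forces $H_{\alpha,n}(y)\to-\infty$, so $H_{\alpha,n}(y)<m^{*}(y)$ for all sufficiently large $n$. This contradicts the monotonicity established above, and hence $\nu([\beta])\le 1/2$.

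Part (b) will follow by the mirror-image argument on $K_\beta$: the defining condition of $K_\beta$ (every $\alpha$ has a matching $\beta_k$ to its right) yields, by the same case analysis, that $i\mapsto\inf_{j\ge i}H_{\beta,j}(y)$ is constant for every $y\in K_\beta$, and the two-sided Birkhoff contradiction transfers verbatim with $(\sigma_\beta,\mathbf{1}_{[\alpha]}-\mathbf{1}_{[D_\beta]})$ in place of $(\sigma_\alpha,\eta)$. The main obstacle is isolating the correct monotonicity in the first step; a purely forward Birkhoff estimate does not close the proof, since without the finiteness of the past (respectively future) infimum furnished by the backward (respectively forward) Birkhoff limit one cannot contradict the forward (respectively backward) drift.
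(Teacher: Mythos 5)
Your proof is correct, and it takes a genuinely different route from the paper's. The paper pushes $\nu$ forward to $\Sigma_D$ via $\psi_\alpha$, reduces to the ergodic case by ergodic decomposition, and then invokes Krieger's trichotomy (Lemma~\ref{trichotomy}) together with the inclusions $A_\alpha\cup A_0\subset B_\alpha$, $A_\beta\cap B_\alpha=\emptyset$ from~\eqref{subset-AB}: drift from $\nu([\beta])>1/2$ would force $\nu\circ\psi_\alpha^{-1}(A_\beta)=1$, which the trichotomy rules out. You instead stay entirely inside $\Sigma_\alpha$ and isolate the relevant combinatorial content of $K_\alpha$ as the constancy of the running minimum $i\mapsto\inf_{j\le i}H_{\alpha,j}(y)$, which you verify directly from the defining matched-bracket condition of $B_\alpha$; you then obtain the contradiction from a two-sided Birkhoff argument, using the standard fact that the forward and backward Birkhoff limits agree a.e.\ (so no ergodic decomposition is needed either). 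Both proofs exploit the same underlying one-sidedness of $B_\alpha$ and both use Birkhoff in both time directions, but yours is more self-contained -- it bypasses the trichotomy lemma, the map $\psi_\alpha$, and the ergodic decomposition -- at the modest cost of proving the monotonicity lemma explicitly. The one cosmetic caveat is that the displayed formula for $H_{\alpha,i}$ in the paper has the factor $\delta_{\beta,y_j}$ written inside the $\sum_{k=1}^M$; you are (correctly) reading it in the intended way, as the image of $H_i$ under $\phi_\alpha$, so that each symbol contributes $\pm1$, not $\pm M$.
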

\begin{proof} Suppose $\nu\in \mathcal M(K_\alpha,\sigma_\alpha|K_\alpha)$ is ergodic. Since $A_\alpha\cup A_0\subset B_\alpha$ and $A_\beta\cap B_\alpha=\emptyset$ as in \eqref{subset-AB}, the ergodic measure $\nu\circ\psi_\alpha^{-1}$ gives measure $1$ to either $A_\alpha$ or $A_0$ by Lemma~\ref{trichotomy}.
We claim $\nu\circ\psi_\alpha^{-1}([D_\beta])\leq 1/2$, for otherwise
 by Birkhoff's ergodic theorem we would have $\lim_{i\to\infty}H_i(x)\to-\infty$ and $\lim_{i\to-\infty}H_i(x)\to\infty$ for $\nu\circ\psi_\alpha^{-1}$-a.e., namely $\nu\circ\psi_\alpha^{-1}(A_\beta)=1$.
 Hence we obtain $\nu([\beta])=\nu\circ\psi_\alpha^{-1}([D_\beta])\leq1/2.$
 If $\nu\in \mathcal M(K_\alpha,\sigma_\alpha|K_\alpha)$ is not ergodic, then the ergodic decomposition theorem yields $\nu([\beta])\leq1/2$ as required in (a). 
  A proof of (b) is analogous. \end{proof}

\subsection{Maximal entropy measures for the Dyck shift}\label{MME-sec} 
For each $\gamma\in\{\alpha,\beta\}$,
let $\nu_\gamma$ denote the Bernoulli measure on $\Sigma_\gamma$ associated with the probability vector $(1/(M+1),\ldots,1/(M+1))$. Since $\nu_\alpha([\beta])=1/(M+1)<1/2$, Lemma~\ref{gyak-lem}(a) gives $\nu_\alpha(K_\alpha)=1$. Since $\nu_\beta([\alpha])=1/(M+1)<1/2$, Lemma~\ref{gyak-lem}(b) gives $\nu_\beta(K_\beta)=1$.
Hence, the measures  \begin{equation}\label{ergmme}\mu_{\alpha}=\nu_\alpha\circ\psi_\alpha^{-1}\ \text{ and } \ \mu_\beta=\nu_\beta\circ\psi_\beta^{-1}\end{equation}
are in $\mathcal M(\Sigma_D,\sigma)$
and of entropy $\log(M+1)$.
%By Lemma~\ref{dense-lem}, they charge any non-empty open subset of $\Sigma_D$.
 From direct calculations based on \eqref{ergmme}, 
we deduce the following identities for $1\leq k\leq M$:
 \begin{equation}\begin{split}\label{balance2}
 \mu_\alpha
\left([\alpha_k]\right)&=\mu_\alpha
\left([D_\beta]\right)=\frac{1}{M+1};\\
\mu_\beta
\left([D_\alpha]\right)&=
\mu_\beta\left([\beta_k]\right)=\frac{1}{M+1 }.\end{split}\end{equation}
In particular, $\mu_\alpha\neq\mu_\beta$ holds.
By virtue of the trichotomy in Lemma~\ref{trichotomy}, the ergodicity of $\mu_\alpha$, $\mu_\beta$ and \eqref{balance2}  altogether imply
 \begin{equation}\label{measure-nu}\mu_\alpha(A_\alpha )=1\ \text{ and }\ 
 \mu_\beta(A_\beta )=1.\end{equation}
 From \eqref{subset-AB}, Lemma~\ref{trichotomy} and \eqref{measure-nu}
 it follows that $\mu_\alpha$, $\mu_\beta$ are precisely the ergodic measures of maximal entropy $\log(M+1)$.
 %and that there is no other ergodic MME. 
% We have outlined the proof of the following theorem due to Krieger.

 %\begin{thm}[\cite{Kri74}]\label{k-thm}
%For each integer $M\geq2$ 
%There exist exactly two shift-invariant ergodic Borel probability measures of maximal entropy $\log(M+1)$ for $(\Sigma_D,\sigma)$. 
%They are %$(\frac{1}{M+1},\ldots,\frac{1}{M+1})$ 
%Bernoulli and 
 %charge any non-empty open subset of  $\Sigma_D$.
%\end{thm}

\subsection{Estimates of numbers of periodic points}\label{count-sec}
Hamachi and Inoue \cite{HI05} obtained  exact formulas for numbers of periodic points of the Dyck shift.
 The next two lemmas essentially follow from their results in \cite{HI05}.
\begin{lemma}[\cite{T24}, Lemma~2.6]\label{per-lem} For each $\gamma\in\{\alpha,\beta\}$ we have \[\frac{1}{3}(M+1)^n\leq\#{\rm Per}_{\gamma,n}(\sigma )<(M+1)^n\]
for all sufficiently large integer $n\geq1$.\end{lemma}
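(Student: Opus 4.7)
The plan is to exploit Krieger's homeomorphism $\psi_\alpha\colon K_\alpha\to B_\alpha$ from $\S$\ref{DM-str} to translate the counting of ${\rm Per}_{\alpha,n}(\sigma)$ into a counting of periodic points of the full shift $\Sigma_\alpha$ on $M+1$ symbols subject to a simple combinatorial constraint, after which a routine binomial estimate finishes the argument. By the obvious left-right symmetry that swaps the roles of $\alpha$ and $\beta$ and reverses time, the case $\gamma=\beta$ reduces to the case $\gamma=\alpha$, so I focus on $\gamma=\alpha$.

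First I would set up the bijective correspondence
\[{\rm Per}_{\alpha,n}(\sigma)\ \longleftrightarrow\ \left\{y\in\Sigma_\alpha\colon\sigma_\alpha^ny=y\ \text{and}\ H_{\alpha,n}(y)>0\right\}\]
implemented by $\phi_\alpha$ and $\psi_\alpha$. For $x\in{\rm Per}_{\alpha,n}(\sigma)$, the condition $H_n(x)>0$ together with $\sigma^nx=x$ forces $H_j(x)\to\infty$ as $j\to\infty$ and $H_j(x)\to-\infty$ as $j\to-\infty$, so $x\in A_\alpha\subset B_\alpha$ by \eqref{subset-AB}; then $y:=\phi_\alpha(x)\in K_\alpha$ is periodic of period $n$ with $H_{\alpha,n}(y)=H_n(x)>0$. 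Conversely, given a periodic $y\in\Sigma_\alpha$ of period $n$ with $H_{\alpha,n}(y)>0$, I would verify $y\in K_\alpha$ and then produce $\psi_\alpha(y)\in B_\alpha\subset\Sigma_D$ as the desired element of ${\rm Per}_{\alpha,n}(\sigma)$. Membership in $K_\alpha$ reduces to checking that for every $i$ with $y_i=\beta$ there exists $k\leq i$ with $H_{\alpha,k}(y)=H_{\alpha,i+1}(y)$, and this follows from two observations: periodicity together with $H_{\alpha,n}(y)>0$ forces $H_{\alpha,j}(y)\to-\infty$ as $j\to-\infty$, and $j\mapsto H_{\alpha,j}(y)$ has $\pm 1$ increments, so no integer value can be skipped on the way down.

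Next, since $\Sigma_\alpha$ is the full shift on the $M+1$ symbols $D_\alpha\cup\{\beta\}$, periodic points of period $n$ correspond bijectively to words in $(D_\alpha\cup\{\beta\})^n$, and the condition $H_{\alpha,n}(y)>0$ becomes the requirement that the word contain fewer $\beta$'s than symbols from $D_\alpha$. Grouping by the number $k$ of $\beta$'s gives
\[\#{\rm Per}_{\alpha,n}(\sigma)=\sum_{k=0}^{\lfloor(n-1)/2\rfloor}\binom{n}{k}M^{n-k}.\]
The upper bound $\#{\rm Per}_{\alpha,n}(\sigma)<(M+1)^n$ is immediate from $(M+1)^n=\sum_{k=0}^n\binom{n}{k}M^{n-k}$ and the fact that the omitted tail $k\geq\lceil n/2\rceil$ is nonempty and contributes strictly positively. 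For the lower bound I would view $\binom{n}{k}M^{n-k}/(M+1)^n$ as the mass at $k$ of a $\mathrm{Binomial}(n,1/(M+1))$ law: since $M\geq 2$ its mean $n/(M+1)\leq n/3$ lies strictly below $n/2$, so Chebyshev's inequality gives $\sum_{k=0}^{\lfloor(n-1)/2\rfloor}\binom{n}{k}M^{n-k}/(M+1)^n\to 1$, which in particular exceeds $1/3$ for all sufficiently large $n$.

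The only step requiring any real care is the verification that a periodic $y\in\Sigma_\alpha$ with $H_{\alpha,n}(y)>0$ automatically belongs to $K_\alpha$; once that structural point is in place, the remainder is bookkeeping with the definitions of $\psi_\alpha$ and $H_{\alpha,n}$ together with a standard binomial tail estimate.
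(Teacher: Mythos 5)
The paper does not prove this lemma in the text: it simply cites \cite[Lemma~2.6]{T24}, noting that the underlying exact counts are due to Hamachi and Inoue \cite{HI05}. So there is no in-paper proof to compare line by line; the only question is whether your independent argument is correct.

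It is. The core of your argument is the bijection between $\mathrm{Per}_{\alpha,n}(\sigma)$ and $\{y\in\Sigma_\alpha\colon\sigma_\alpha^ny=y,\ H_{\alpha,n}(y)>0\}$ via $\phi_\alpha$ and $\psi_\alpha$, then a binomial computation. Both directions of the bijection check out: periodicity with $H_n(x)>0$ forces $H_j(x)\to\pm\infty$ at the appropriate ends, so $x\in A_\alpha\subset B_\alpha$; and conversely periodicity with $H_{\alpha,n}(y)>0$ forces $H_{\alpha,j}(y)\to-\infty$ as $j\to-\infty$, and the $\pm1$ increments (no value skipped) give the matching-bracket condition defining $K_\alpha$. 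The resulting exact identity $\#\mathrm{Per}_{\alpha,n}(\sigma)=\sum_{k=0}^{\lfloor(n-1)/2\rfloor}\binom{n}{k}M^{n-k}$ then yields the upper bound immediately, and the lower bound from the law of large numbers for a $\mathrm{Binomial}(n,1/(M+1))$ variable since $1/(M+1)\le1/3<1/2$; in fact this shows the ratio tends to $1$, so the constant $1/3$ is far from sharp. Your argument is in the same combinatorial spirit as the paper's own proof of Lemma~\ref{per-eq0}, and essentially re-derives the Hamachi--Inoue formula.

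One point worth flagging explicitly: your argument silently uses that $H_{\alpha,i}$ has $\pm1$ increments ($+1$ on $D_\alpha$-symbols, $-1$ on $\beta$), which is what makes $H_{\alpha,n}(\phi_\alpha(x))=H_n(x)$ and makes the intermediate-value ``no value skipped'' step work. Read literally, the displayed formula in the paper places $\delta_{\beta,y_j}$ inside $\sum_{k=1}^M$, which would give increments $+1$ and $-M$; that would break both of these steps (in particular $H_{\alpha,n}\circ\phi_\alpha\ne H_n$ for $M\ge2$). But this must be a typo: the proof of Lemma~\ref{gyak-lem} in the paper, which deduces $\#\{k\colon x_k=\beta\}>\#\{k\colon x_k\ne\beta\}$ from $H_{\alpha,i-j+1}\neq H_{\alpha,i+1}$, only makes sense with $\pm1$ increments, and the whole $\alpha/\beta$-height interpretation requires it. So your reading is the intended one, but since you lean on it twice, it would be prudent to say so.
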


\begin{lemma}\label{per-eq0}
For every $n\in\mathbb N$ we have
\[\#{\rm Per}_{0,n}(\sigma )=\begin{cases}\vspace{1mm}0&\ \text{ if $n$ is odd,}\\\displaystyle{\binom{n}{n/2 }M^{n/2}}\leq (2\sqrt{M})^n&\ \text{ if $n$ is even.}\end{cases}\]\end{lemma}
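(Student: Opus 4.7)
The plan is to handle the parity of $n$ separately and, in the even case, set up an explicit bijection that encodes the Dyck bracket-matching structure of a periodic word.

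For odd $n$, $H_n(x)$ is a sum of $n$ terms from $\{+1,-1\}$, so $H_n(x)\equiv n\pmod 2$; hence $H_n(x)=0$ is impossible and $\#{\rm Per}_{0,n}(\sigma)=0$. For even $n=2m$, let $\Omega$ be the set of binary strings of length $n$ with exactly $m$ entries marked $D_\alpha$-type (so $|\Omega|=\binom{n}{m}$), and consider the map $\Phi\colon{\rm Per}_{0,n}(\sigma)\to\Omega\times\{1,\ldots,M\}^m$ sending $x$ to $(\omega(x),c(x))$, where $\omega(x)$ records the type ($D_\alpha$ versus $D_\beta$) of each of $x_0,\ldots,x_{n-1}$ and $c(x)$ records the indices of the $\alpha$-type entries, read left to right. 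The condition $H_n(x)=0$ provides exactly $m$ entries of each type, so $\omega(x)\in\Omega$.

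To prove $\Phi$ is a bijection, I would exploit the walk $(H_i(x))_{i\in\mathbb{Z}}$: since $x$ is $n$-periodic and $H_n(x)=0$, one has $H_{i+n}(x)=H_i(x)$, so the walk is bounded, and standard bracket matching assigns to each $j\in\mathbb{Z}$ with $x_j\in D_\beta$ a unique $i(j)<j$ with $x_{i(j)}\in D_\alpha$, $H_{i(j)}(x)=H_{j+1}(x)$, and $H_k(x)>H_{i(j)}(x)$ for all $i(j)<k\leq j$. The Dyck relation $\alpha_i\cdot\beta_j=\delta_{ij}$ makes the defining condition $x\in\Sigma_D$ equivalent to requiring each matched pair $(x_{i(j)},x_j)$ to carry the same index. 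Hence, once $\omega$ and the $\alpha$-indices are prescribed, the $\beta$-indices are uniquely forced by the matching (together with the periodicity $x_{i(j)}=x_{i(j)\bmod n}$), which gives both injectivity and surjectivity of $\Phi$.

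The main obstacle is the surjectivity direction, namely checking that after filling in the $\beta$-indices from an arbitrary $(\omega,c)\in\Omega\times\{1,\ldots,M\}^m$ via this global matching, every finite subword $x_j\cdots x_k$ still has nonzero reduction. This reduces to the observation that the greedy reduction modulo the Dyck relations pairs each $\beta$ in the window with the same $\alpha$ as the global walk-matching does whenever both partners lie in $[j,k]$; the remaining unmatched entries survive the reduction as a block of $\beta$'s followed by $\alpha$'s, which is never zero. Granting this, the count is immediate: $\#{\rm Per}_{0,n}(\sigma)=\binom{n}{m}M^m=\binom{n}{n/2}M^{n/2}$, and the crude estimate $\binom{n}{n/2}\leq 2^n$ yields $\binom{n}{n/2}M^{n/2}\leq(2\sqrt{M})^n$ as claimed.
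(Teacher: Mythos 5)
Your proof is correct and follows essentially the same counting argument as the paper: choose the $\binom{n}{n/2}$ positions to hold $D_\alpha$-symbols, choose their indices in $M^{n/2}$ ways, and observe that the $D_\beta$-indices are then uniquely forced. The paper states the forcing of the $\beta$-indices (and the admissibility of the resulting sequence) without justification, whereas you make the mechanism explicit via the bounded periodic height walk and the induced bracket matching, which is a genuine improvement in rigour; your only remaining gap is the phrase ``Granting this'' in the surjectivity direction, but the sketch you give (inner matches in any window agree with the global matching, so no $\alpha_i\beta_j$ with $i\neq j$ ever appears in the greedy reduction) is the right argument and can be completed by an easy induction.
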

\begin{proof} For each $n\in\mathbb N$, there is a one-to-one correspondence between ${\rm Per}_{0,n}(\sigma)$ and  $\{\omega\in L_n(\Sigma_D)\colon{\rm red}(\omega)=1\}$. If $n$ is odd, then these sets are empty. If $n$ is even, then there are $\binom{n}{n/2 }$ number of ways to distribute positions of symbols in $D_\alpha$
in $n$ strings. For each such a way, there are $(\#D_\alpha)^{n/2}$ number of ways to distribute symbols in $D_\alpha$. For each such a way, the distribution of symbols in $D_\beta$ in free positions is determined uniquely. 
This yields the desired equality.\end{proof}

 \section{The LDP for periodic points of the Dyck shift}
In this section we complete the proofs of the main results. They 
rely on the level LDP on the spaces $\mathcal M(K_\alpha)$ and $\mathcal M(K_\beta)$ stated as
 Proposition~\ref{LDP-rest0} in $\S$\ref{restricted}. In $\S$\ref{lower-sec} and $\S$\ref{upper-sec} we derive intermediate large deviations bounds, and use them to prove Proposition~\ref{LDP-rest0} in $\S$\ref{pfprop}.
  We prove Theorem~\ref{theorema} in $\S$\ref{pfthmc}, Theorems~\ref{break-Dyck} and \ref{rate-Dyck} in $\S$\ref{pf-final}, Theorem~\ref{level1-thm} in $\S$\ref{last-sec}, and \eqref{ratef-1} in $\S$\ref{ratepf}.

%\begin{lemma}\label{support-lem}Let $(\mu_n)_{n=1}^\infty$ be a sequence of Borel probability measures on a metric space $\mathcal X$ for which the LDP holds with the rate function $I$. Then, the support of any accumulation point of  $(\mu_n)_{n=1}^\infty$ in the weak* topology on $\mathcal M(\mathcal X)$ is contained in the set of minimizers of $I$. \end{lemma} \begin{proof}Let $\{\mu_{n_k}\}_{k\in\mathbb N}$ be a convergent subsequence of $\{\widetilde\mu_n\}$ in the weak* topology and let $\widetilde\mu_\infty\in M(\mathcal X)$ denote the limit point. Let $x\in\mathcal X$ be out of the set of minimizers. Let $B(x)$ be an open ball in $\mathcal X$ of radius $x$ about $\delta$ not intersecting the set of minimizers. Since the closure of $B(x)$ does not intersect that set of minimizers,   $\inf_{B} I>0$. $\limsup_{k\to\infty}n_k^{-1}\log\widetilde\mu_{n_k}(B)\leq-\inf_{B}I$, and so $\lim_{k\to\infty}\widetilde\mu_{n_k}(B)=0.$ Hence $\widetilde\mu_\infty(B)=0$. Hence, $x$ is not contained in the support of $\widetilde\mu_\infty$. Since $x$ is an arbitrary point outside of $I^{-1}(0)$, we obtain \end{proof}

 \subsection{The level-2 LDP on two embedded full shifts}\label{restricted}
 For each $n\in\mathbb N$, define a Borel probability measure
  $\widetilde\nu_{\alpha0,n}$ on $\mathcal M(K_\alpha)$ by 
\[\widetilde\nu_{\alpha0,n}(\cdot)=
\frac{\#\{{x\in \phi_\alpha({\rm Per}_{\alpha0,n}(\sigma )) \colon V_{n}(\sigma_\alpha,x)}\in\cdot\}}{\#\phi_\alpha({\rm Per}_{\alpha0,n}(\sigma ) )}.\]
We also define a Borel probability measure
  $\widetilde\nu_{\beta,n}$ on $\mathcal M(K_\beta)$ by 
\[\widetilde\nu_{\beta,n}(\cdot)=
\frac{\#\{{x\in \phi_\beta({\rm Per}_{\beta,n}(\sigma ))\colon V_{n}(\sigma_\beta,x)}\in\cdot\}}{\#\phi_\beta({\rm Per}_{\beta,n}(\sigma ) )}\]

\begin{prop}\label{LDP-rest0} The LDP holds for 
$(\widetilde\nu_{\alpha0,n}  )_{n=1}^\infty$ and $(\widetilde\nu_{\beta,n} )_{n=1}^\infty$ with 
the good rate functions
  $J_\alpha\colon \mathcal M(K_\alpha)\to [0,\infty]$ and $J_\beta\colon \mathcal M(K_\beta)\to [0,\infty]$ respectively given by
\[J_\alpha(\nu)=\begin{cases}\log(M+1)-h(\nu)\ &\text{ if $\nu\in \mathcal M(K_\alpha,\sigma_\alpha|K_\alpha)$},\\
\infty&\text{ otherwise,}\end{cases}\]
and
\[J_\beta(\nu)=\begin{cases}\log(M+1)-h(\nu)\ &\text{ if $\nu\in \mathcal M(K_\beta,\sigma_\beta|K_\beta)$},\\
\infty&\text{ otherwise.}\end{cases}\]
\end{prop}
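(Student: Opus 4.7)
I would prove the LDP for $(\widetilde\nu_{\alpha 0,n})_{n=1}^\infty$; the case of $(\widetilde\nu_{\beta,n})_{n=1}^\infty$ follows by the mirror argument. The plan is to adapt Takahashi's combinatorial counting argument for topologically mixing Markov shifts \cite{Tak84} to the present setting, which has two new difficulties: we count only the $\Sigma_\alpha$-periodic points lying in $\phi_\alpha({\rm Per}_{\alpha 0,n}(\sigma))$, and $\mathcal M(K_\alpha)$ is non-compact since $K_\alpha$ is a proper dense Borel subset of $\Sigma_\alpha$.

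First I would prove matching local large-deviations estimates at each $\nu \in \mathcal M(K_\alpha, \sigma_\alpha|K_\alpha)$ with $\nu([\beta]) < 1/2$: for every $\epsilon > 0$ there is a weak* neighborhood $U$ of $\nu$ such that both $\liminf_n -\tfrac{1}{n}\log \widetilde\nu_{\alpha 0,n}(U)$ and $\limsup_n -\tfrac{1}{n}\log \widetilde\nu_{\alpha 0, n}(U)$ lie within $\epsilon$ of $\log(M+1) - h(\nu)$. For the lower bound I would use the Shannon--McMillan--Breiman theorem and Lemma \ref{variation} to exhibit at least $e^{n(h(\nu) - \epsilon)}$ admissible $n$-words $\omega$ for which every $x \in [\omega]$ has $V_n(\sigma_\alpha,x) \in U$; each such $\omega$ determines a unique $\sigma_\alpha$-periodic point in $[\omega]$, and the strict inequality $\nu([\beta]) < 1/2$ gives $H_{\alpha,n}(\omega) > 0$ for large $n$, placing the corresponding periodic orbit in $B_\alpha$ into ${\rm Per}_{\alpha, n}(\sigma) \subset {\rm Per}_{\alpha 0, n}(\sigma)$. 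Dividing by $\#\phi_\alpha({\rm Per}_{\alpha 0, n}(\sigma)) \asymp (M+1)^n$ from Lemma \ref{per-lem} yields the bound. The matching upper bound follows from the standard McMillan--Breiman estimate, bounding the number of $n$-cylinders charged by a measure close to $\nu$ by $e^{n(h(\nu) + \epsilon)}$.

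Second, I would globalize these bounds and remove the restriction $\nu([\beta]) < 1/2$ by entropy-density approximation. For arbitrary $\nu \in \mathcal M(\Sigma_\alpha, \sigma_\alpha)$ and $\delta \in (0,1)$, the convex combination $\nu_\delta = (1 - \delta)\nu + \delta \nu_\alpha$ with the Bernoulli MME satisfies $\nu_\delta([\beta]) < 1/2$, $h(\nu_\delta) \geq (1 - \delta)h(\nu)$ by affinity of entropy, and $\nu_\delta \to \nu$ weakly as $\delta \to 0$. By the entropy density of the full shift (Lemma \ref{e-dense-lem}) together with Lemma \ref{gyak-lem}(a), $\nu_\delta$ is weak*-approximated by ergodic elements of $\mathcal M(K_\alpha, \sigma_\alpha|K_\alpha)$ with near-equal entropy, so the local lower bound extends to all $\nu \in \mathcal M(K_\alpha, \sigma_\alpha|K_\alpha)$. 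For the global upper bound over a closed $\mathcal C \subset \mathcal M(K_\alpha)$, I would cover the compact set $\bar{\mathcal C} \cap \mathcal M(\Sigma_\alpha, \sigma_\alpha)$ (closure taken in $\mathcal M(\Sigma_\alpha)$) by finitely many local-upper-bound neighborhoods, and use the same approximation on $\epsilon$-enlargements of $\mathcal C$ to identify the infimum of $\log(M+1) - h$ over $\bar{\mathcal C} \cap \mathcal M(\Sigma_\alpha, \sigma_\alpha)$ with $\inf_{\mathcal C} J_\alpha$ in the limit.

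The hard part will be the goodness of $J_\alpha$, namely that $\{J_\alpha \leq c\}$ is compact in $\mathcal M(K_\alpha)$ and not merely in the larger space $\mathcal M(\Sigma_\alpha)$. In the latter space the sublevel set $\{\nu \in \mathcal M(\Sigma_\alpha, \sigma_\alpha): h(\nu) \geq \log(M+1) - c\}$ is compact by upper semicontinuity of entropy, and any weak* limit of a sequence in $\{J_\alpha \leq c\}$ satisfies $\nu([\beta]) \leq 1/2$ by Lemma \ref{lem-half}; what must be excluded is positive mass on $\Sigma_\alpha \setminus K_\alpha$ in the limit. The entropy-density approximation (the content of Lemma \ref{e-dense-lem} and the author's remark surrounding it) is precisely the tool designed to close this gap, letting one analyze the ergodic decomposition of the limit measure and conclude that its weight on ergodic components with $[\beta]$-frequency at least $1/2$ is forced to be zero.
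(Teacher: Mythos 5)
Your plan for the large-deviation lower and upper bounds is essentially the same as the paper's.  The paper proves these bounds in Lemmas~\ref{lowper0}--\ref{uplem0'}: Shannon--McMillan--Breiman and Birkhoff for an ergodic $\nu$ with $\nu([\beta])<1/2$, Lemma~\ref{variation} to control Birkhoff-average variations, Lemma~\ref{gyak-lem} and Lemma~\ref{lem-half} for the $[\beta]$-frequency, and then entropy density (Lemma~\ref{e-dense-lem}) combined with a perturbation by a point mass (the paper uses $\delta_{\alpha_1^\infty}$, your choice $\nu_\alpha$ works equally well) to treat non-ergodic measures and the borderline case $\nu([\beta])=1/2$, followed by a compactness/covering step for the global upper bound.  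In the upper bound you do need to manufacture a measure actually lying in $\mathcal M(K_\alpha,\sigma_\alpha|K_\alpha)$ (not merely in $\mathcal M(\Sigma_\alpha,\sigma_\alpha)$) whose entropy controls the count; the paper constructs the maximal-entropy measure of a full shift over the set $G_n$ of admissible $n$-words and then perturbs and re-ergodizes, and your ``standard McMillan--Breiman'' phrase elides this, but the intended shape matches.

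The genuine gap is in your goodness argument, and the objection is concrete.  You write that the entropy-density approximation ``is precisely the tool designed to close this gap'' and that the ergodic-decomposition weight on components with $[\beta]$-frequency at least $1/2$ ``is forced to be zero.''  Neither claim holds.  Entropy density is a statement about replacing a non-ergodic invariant measure by a nearby \emph{ergodic} one of nearly the same entropy; it says nothing about the ergodic decomposition of a weak* \emph{limit} of a sequence nor about its mass on $K_\alpha$.  And $\mathcal M(K_\alpha,\sigma_\alpha|K_\alpha)$ is genuinely not closed in $\mathcal M(\Sigma_\alpha)$: the uniform measures on the $\sigma_\alpha$-orbits of the periodic points $(\alpha_1^{2n}\beta^{2n-1})^\infty$ lie in $\mathcal M(K_\alpha,\sigma_\alpha|K_\alpha)$ (every $\beta$ in the period is closed, since the per-period drift is $+1$), yet they converge weak* to $\tfrac{1}{2}\delta_{\alpha_1^\infty}+\tfrac{1}{2}\delta_{\beta^\infty}$, which gives mass $1/2$ to $K_\alpha$ but charges the bad ergodic component $\delta_{\beta^\infty}$ with weight $1/2$.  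So mass of the limit \emph{can} escape $K_\alpha$; nothing in your sketch controls this, and it cannot be controlled purely by ``analyzing the ergodic decomposition'' once one imposes an entropy lower bound — a structural input is required.

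The paper proves goodness by a fundamentally different route that does not stay inside $\mathcal M(\Sigma_\alpha)$ at all: it pushes the sublevel sequence $(\nu_n)$ forward via $\psi_\alpha^{-1}$ to $\mathcal M(\Sigma_D)$, lands in $\mathcal M_{\alpha0}(\Sigma_D,\sigma)$ using the inclusions \eqref{subset-AB}, invokes compactness of $\mathcal M(\Sigma_D)$ together with the closedness of $\mathcal M_{\alpha0}(\Sigma_D,\sigma)$ (Lemma~\ref{closed}, which is a statement about the Dyck-shift trichotomy of Lemma~\ref{trichotomy}), and then pulls a convergent subsequence back via $\phi_\alpha$, whose continuous extendability to all of $\Sigma_D$ is what makes the push-forward of measures continuous and the argument close; the asymmetry with $\psi_\alpha$, which by Lemma~\ref{no-extension} has \emph{no} continuous extension to $\Sigma_\alpha$, is the load-bearing structural fact.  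Your sketch does not use any of this, and it is precisely what you would need to supply to repair the goodness step.
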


We will deduce the lower and upper  bounds of the LDPs in Proposition~\ref{LDP-rest0} from the corresponding intermediate lower and upper bounds for particular open and closed sets.
%obtained respectively in $\S$\ref{lower-sec} and $\S$\ref{upper-sec}.  

%To reduce our consideration to ergodic measures only, we approximate non-ergodic measures by ergodic ones. 
A common key ingredient in the deduction of both bounds is the following notion.
We say a subshift $\Sigma$ is {\it entropy dense} if
for any shift-invariant measure $\mu\in\mathcal M(\Sigma)$ that is not ergodic, any $\varepsilon>0$ and any open subset $\mathcal G$ of $\mathcal M(\Sigma)$ that contains $\mu$, there exists a shift-invariant measure $\mu'\in \mathcal G$ such that 
$h(\mu')>h(\mu)-\varepsilon$.
%Pfister and Sullivan proved that any $\beta$-shift is entropy dense \cite{PS05}. The Dyck shift is not entropy dense.
%By virtue of the next lemma, for each $\gamma\in\{\alpha,\beta\}$, any non-ergodic measure in $\mathcal M(\Sigma_\gamma,\sigma_\gamma)$ can be approximated in the weak* topology by ergodic measures with similar entropy.
Eizenberg et al. proved the entropy density in a great generality
\cite[Proposition~6.1]{EKW94}. 
The next lemma is a particular case of their result.
\begin{lemma}\label{e-dense-lem} 
The full shift is entropy dense.
\end{lemma}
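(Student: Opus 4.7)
The plan is to exploit the fact that in the full shift $\Sigma = S^{\mathbb{Z}}$ arbitrary concatenation of words is admissible, and use this freedom to glue together ``good'' blocks from different ergodic components of $\mu$ into a single ergodic measure of nearly the same entropy. The first step is a reduction to a finite convex combination of ergodic measures: given non-ergodic $\mu \in \mathcal{M}(\Sigma, \sigma)$ with ergodic decomposition $\mu = \int \nu_\omega \, d\pi(\omega)$, affinity of the Kolmogorov--Sinai entropy gives $h(\mu) = \int h(\nu_\omega) \, d\pi(\omega)$, and a standard weak*-approximation argument then furnishes a convex combination $\hat\mu = \sum_{i=1}^{k} p_i \nu_i$ with each $\nu_i$ ergodic, $p_i > 0$, $\hat\mu \in \mathcal{G}$, and $h(\hat\mu) > h(\mu) - \varepsilon/2$. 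It therefore suffices to find an ergodic $\mu' \in \mathcal{G}$ with $h(\mu') > h(\hat\mu) - \varepsilon/2$.

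Next, for each $i$ and a small $\delta > 0$, I would invoke the Shannon--McMillan--Breiman theorem, the Birkhoff ergodic theorem and Lemma~\ref{variation} to extract, for all sufficiently large $N$, a family $\mathcal{W}_i \subset L_N(\Sigma)$ with $\#\mathcal{W}_i \geq \exp(N(h(\nu_i) - \delta))$ such that, for every $f$ in a fixed finite family of continuous test functions generating a weak*-neighborhood $\mathcal{U} \subset \mathcal{G}$ of $\hat\mu$, every $w \in \mathcal{W}_i$ and every $x \in [w]$, the Birkhoff average $(1/N) S_N f(x)$ lies within $\delta$ of $\int f \, d\nu_i$.

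Setting $\mathcal{W} = \bigsqcup_{i=1}^k \mathcal{W}_i$, I would then introduce the Bernoulli shift $(\mathcal{W}^{\mathbb{Z}}, \bar\sigma, \bar\pi)$ that assigns each $w \in \mathcal{W}_i$ the probability $p_i / \#\mathcal{W}_i$. The concatenation map $\Phi \colon \mathcal{W}^{\mathbb{Z}} \to \Sigma$ sending a bi-infinite block sequence to its symbol-by-symbol concatenation with the $0$-th block occupying positions $0, \ldots, N-1$, is everywhere defined thanks to the full-shift hypothesis and intertwines $\bar\sigma$ with $\sigma^N$; hence $\Phi_* \bar\pi$ is $\sigma^N$-invariant and $\sigma^N$-ergodic. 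The phase-average $\mu' = (1/N) \sum_{j=0}^{N-1} \sigma^j_* \Phi_* \bar\pi$ is therefore $\sigma$-invariant, and a direct computation gives $h_\sigma(\mu') = (1/N)\bigl[\sum_i p_i \log \#\mathcal{W}_i + H(p)\bigr] \geq h(\hat\mu) - \delta$ along with $|\int f \, d\mu' - \int f \, d\hat\mu| \leq \delta$ for the test functions, so that $\mu' \in \mathcal{G}$ when $\delta$ is sufficiently small.

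The main obstacle is the ergodicity of $\mu'$, and this is precisely where the full-shift hypothesis is essential: it is the absence of transition constraints that allows $\mathcal{W}$ to carry a genuine Bernoulli measure in the first place. To verify ergodicity I would argue that any $\sigma$-invariant set $E$ with $\mu'(E) > 0$ is also $\sigma^N$-invariant and satisfies $\Phi_* \bar\pi(E) > 0$, whence $\sigma^N$-ergodicity of $\Phi_* \bar\pi$ forces $\Phi_* \bar\pi(E) = 1$; by $\sigma$-invariance of $E$ the same conclusion holds for each $\sigma^j_* \Phi_* \bar\pi$, so $\mu'(E) = 1$ and $\mu'$ is $\sigma$-ergodic. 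Taking $\delta$ small then yields the desired ergodic measure, completing the proof.
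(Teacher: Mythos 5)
Your construction is correct in outline and is essentially the standard block-splicing proof of entropy density for subshifts with specification, specialized to the full shift. The paper, by contrast, does not prove this lemma at all: it cites it as a special case of \cite[Proposition~6.1]{EKW94}, where entropy density is proved for general $\mathbb{Z}^d$-actions under a weak specification hypothesis. So you have supplied a self-contained one-dimensional version of what the paper delegates to the literature, and your use of the full-shift hypothesis (every concatenation of $N$-words is admissible, so the Bernoulli blocking measure on $\mathcal{W}^{\mathbb Z}$ pushes forward to a genuine $\sigma^N$-invariant measure) is exactly where specification enters.

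One point in your write-up needs to be tightened. The identity $h_\sigma(\mu')=(1/N)\bigl(\sum_i p_i\log\#\mathcal{W}_i + H(p)\bigr)$ requires the concatenation map $\Phi$ to be injective, i.e.\ the families $\mathcal{W}_1,\ldots,\mathcal{W}_k$ to be \emph{genuinely} pairwise disjoint as subsets of $L_N(\Sigma)$. A formal disjoint union does not suffice, since $\Phi$ forgets the label, and on a collision letter the push-forward is a Bernoulli measure with merged probabilities, whose entropy can be strictly smaller. The fix is easy and worth stating: since $\nu_1,\ldots,\nu_k$ are distinct ergodic measures, choose a continuous $g$ with $\int g\,\mathrm{d}\nu_i$ pairwise distinct, include $g$ among the test functions generating the neighborhood $\mathcal U$, and take $\delta$ smaller than half the minimal gap of the values $\int g\,\mathrm{d}\nu_i$. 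Then the empirical $g$-average constraint forces $\mathcal{W}_i\cap\mathcal{W}_j=\emptyset$ for $i\ne j$, $\Phi$ is injective, and your entropy computation, the weak* estimate $\bigl|\int f\,\mathrm{d}\mu'-\int f\,\mathrm{d}\hat\mu\bigr|\le\delta$, and the ergodicity argument via $\sigma^N$-ergodicity of $\Phi_*\bar\pi$ all go through exactly as you indicate.
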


To proceed we need some notation. %Let $C(\Sigma_\alpha)$ denote the space of real-valued continuous functions on $\Sigma_\alpha$ endowed with the supremum norm $\|\cdot\|$.
Let $\gamma\in\{\alpha,\beta\}$.
For $\ell\in\mathbb N$ let
 \[C(\Sigma_\gamma)^\ell=\{\vec F=(f_1,\ldots,f_\ell)\colon f_j\in C(\Sigma_\gamma)\ \text{ for } 1\leq j\leq\ell\}.\]
 For $\vec F=(f_1,\ldots,f_\ell)\in C(\Sigma_\gamma)^\ell$, $\vec \Lambda=(\lambda_1,\ldots,\lambda_\ell)\in\mathbb R^\ell$
and $\nu\in \mathcal M(\Sigma_\gamma)$, the expression 
$\langle\vec F,\nu\rangle>\vec{\Lambda}$ indicates that $\langle f_j,\nu\rangle>\lambda_j$ holds for every $ 1\leq j\leq\ell$. Write
%$\|f_j\|=\max_{x\in\Sigma_\alpha}|f_j(x)|$ and
$\|\vec F\|=\max_{1\leq j\leq \ell}\|f_j\|$. 
For $\varepsilon>0$ write $\vec \varepsilon=(\varepsilon,\ldots,\varepsilon)\in\mathbb R^\ell.$
%For $\omega\in L(\Sigma_\gamma)$ put \[[\omega]= \Sigma_\gamma(0,\omega).\]
For a subset $\Omega$ of $L(\Sigma_\gamma)$, let
\[[\Omega]=\bigcup_{\omega\in\Omega}[\omega].\]

\subsection{Intermediate lower bounds}\label{lower-sec} The next two lemmas give large deviations lower bounds for particular open sets.
\begin{lemma}\label{lowper0}
Let $\ell\in\mathbb N$, $\vec{F}\in C(\Sigma_\alpha)^\ell$, $\vec \Lambda\in\mathbb R^\ell$,
and let $\mathcal G$ be a non-empty open subset of $\mathcal M(K_\alpha)$ of the form
\[\mathcal G=\{\nu\in  \mathcal M(K_\alpha)\colon \langle\vec F,\nu\rangle>\vec \Lambda\}.\]
For any $\nu\in \mathcal G$ we have 
\[\liminf_{n\to\infty}\frac{1}{n}\log\widetilde\nu_{\alpha0,n}( \mathcal G)\geq -J_\alpha(\nu).\]
 \end{lemma}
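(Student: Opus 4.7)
The case $J_\alpha(\nu)=\infty$ is vacuous, so I assume $\nu\in \mathcal M(K_\alpha,\sigma_\alpha|K_\alpha)\cap\mathcal G$. My plan is a Katok-style counting argument for ergodic invariant measures, adapted to the embedded full shift $K_\alpha$. I first reduce to the case that $\nu$ is ergodic with $\nu([\beta])<1/2$. Lemma~\ref{lem-half}(a) gives $\nu([\beta])\leq 1/2$; in the borderline case $\nu([\beta])=1/2$, I replace $\nu$ by the convex combination $\nu_t=(1-t)\nu+t\nu_\alpha$ for small $t>0$, which satisfies $\nu_t([\beta])<1/2$, $h(\nu_t)\geq h(\nu)$ by affinity of Kolmogorov--Sinai entropy, and $\nu_t\in\mathcal G$ for $t$ sufficiently small. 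Applying the entropy density of $\Sigma_\alpha$ (Lemma~\ref{e-dense-lem}) then furnishes, for any $\varepsilon>0$, an ergodic $\nu'\in\mathcal G\cap\mathcal M(\Sigma_\alpha,\sigma_\alpha)$ with $h(\nu')>h(\nu)-\varepsilon$ and, because $[\beta]$ is clopen, $\nu'([\beta])<1/2$. Lemma~\ref{gyak-lem}(a) then places $\nu'\in\mathcal M(K_\alpha,\sigma_\alpha|K_\alpha)$, so it suffices to prove the bound for such $\nu'$.

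Now assume $\nu$ itself is ergodic with $\nu([\beta])<1/2$. Fix a neighborhood $\mathcal G_0\subset\mathcal G$ of $\nu$ small enough that Lemma~\ref{variation} ensures $V_n(\sigma_\alpha,x)\in\mathcal G_0\Rightarrow V_n(\sigma_\alpha,y)\in\mathcal G$ for all $y$ in the $n$-cylinder through $x$, provided $n$ is large. I combine three ingredients, each of which fails on a set of $\nu$-measure at most $\varepsilon$ when $n$ is large: the Shannon--McMillan--Breiman theorem gives $\nu([x_0\cdots x_{n-1}])\leq e^{-n(h(\nu)-\varepsilon)}$; Birkhoff's ergodic theorem applied to the components of $\vec F$ gives $V_n(\sigma_\alpha,x)\in\mathcal G_0$; and Birkhoff applied to $\mathbf 1_{[D_\alpha]}-\mathbf 1_{[\beta]}$ gives $H_{\alpha,n}(x)>0$, where I use $\nu([\beta])<1/2$ to make the limiting average strictly positive. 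The set of $x$ satisfying all three conditions has $\nu$-measure at least $1-3\varepsilon$, so the associated family $\Omega_n\subset L_n(\Sigma_\alpha)$ of $n$-cylinders has cardinality at least $(1-3\varepsilon)\,e^{n(h(\nu)-\varepsilon)}$. For each $\omega\in\Omega_n$, the positive net bracket count forces the periodic extension $\omega^\infty$ to lie in $K_\alpha$, and $\psi_\alpha(\omega^\infty)\in{\rm Per}_{\alpha,n}(\sigma)\subset{\rm Per}_{\alpha 0,n}(\sigma)$; moreover $V_n(\sigma_\alpha,\omega^\infty)\in\mathcal G$ by the choice of $\mathcal G_0$. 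Hence $\omega\in\phi_\alpha({\rm Per}_{\alpha0,n}(\sigma))$ and contributes to the numerator of $\widetilde\nu_{\alpha0,n}(\mathcal G)$; distinct $\omega$'s give distinct $\omega^\infty$'s.

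Combining with the upper bound $\#{\rm Per}_{\alpha0,n}(\sigma)\leq\#{\rm Per}_{\alpha,n}(\sigma)+\#{\rm Per}_{0,n}(\sigma)\leq 2(M+1)^n$ from Lemmas~\ref{per-lem} and \ref{per-eq0} (valid for large $n$ since $(M+1)^2>4M$), I obtain
\[
\widetilde\nu_{\alpha0,n}(\mathcal G)\geq \frac{(1-3\varepsilon)\,e^{n(h(\nu)-\varepsilon)}}{2(M+1)^n},
\]
whence $\liminf_{n\to\infty}(1/n)\log\widetilde\nu_{\alpha0,n}(\mathcal G)\geq h(\nu)-\log(M+1)-\varepsilon=-J_\alpha(\nu)-\varepsilon$. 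Letting $\varepsilon\to 0$ yields the conclusion.

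The main obstacle I anticipate is the careful coordination of the three ergodic estimates with the periodic-extension constraint: the same word $\omega$ must be SMB-typical, empirically typical in a weak* neighborhood fine enough that passing from a generic $x\in[\omega]$ to the periodic extension $\omega^\infty$ preserves membership in $\mathcal G$, and simultaneously have strictly positive net bracket count. Lemma~\ref{variation} is tailor-made for the passage $x\mapsto\omega^\infty$, and the positive-net-count condition is precisely the bridge from $\Sigma_\alpha$ to the embedded structure $K_\alpha$ via $\psi_\alpha$. A secondary subtlety is the degenerate case $\nu([\beta])=1/2$ in Lemma~\ref{lem-half}(a), handled above by a convex perturbation towards $\nu_\alpha$ before invoking entropy density.
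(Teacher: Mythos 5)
Your proof is correct, and it takes a genuinely different route through the counting step than the paper does. Both proofs share the same skeleton (reduce to an ergodic measure via entropy density, then run a Katok-type count using SMB and Birkhoff, then interpret the resulting words as periodic points of $\Sigma_D$), but the mechanism by which you force the periodic extension into $K_\alpha$ is different. You first sharpen the reduction to an ergodic $\nu'$ with the \emph{strict} bound $\nu'([\beta])<1/2$---handling the borderline $\nu([\beta])=1/2$ by a convex perturbation toward $\nu_\alpha$ before invoking entropy density---and then add a third Birkhoff typicality condition, $H_{\alpha,n}(x)>0$, to the definition of $\Omega_n$; this net-positive bracket count at time $n$ is exactly what guarantees $\omega^\infty\in K_\alpha$ (the one-sided cumulative count diverges to $-\infty$ on the left and $+\infty$ on the right, so every $\beta$ in $\omega^\infty$ is matched). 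The paper instead allows $\nu([\beta])=1/2$ in the ergodic case and \emph{appends} a run $\alpha_1^{\lfloor 3\varepsilon n\rfloor}$ to each SMB-typical word so that the empirical frequency of $\beta$ drops strictly below $1/2$, at the cost of the period length mismatch $n\mapsto n+\lfloor 3\varepsilon n\rfloor$, which it then has to repair at the very end of the proof. Your approach avoids that mismatch and the associated bookkeeping; the paper's approach avoids needing to separately sharpen $\nu([\beta])\leq 1/2$ to a strict inequality before the count. Both yield $\liminf_n\frac1n\log\widetilde\nu_{\alpha0,n}(\mathcal G)\geq h(\nu)-\log(M+1)=-J_\alpha(\nu)$ after sending $\varepsilon\to0$. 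Two small points worth making explicit when writing this up: (i) you should record why $H_{\alpha,n}(\omega^\infty)>0$ implies $\omega^\infty\in K_\alpha$ (the $\pm1$-step walk argument sketched above), since this is the linchpin of the step that bypasses the appending; and (ii) the open set $\mathcal G_0$ should be taken of the form $\{\langle\vec F,\cdot\rangle>\vec\Lambda+\vec\varepsilon_0\}$ with $\vec\varepsilon_0$ tied to Lemma \ref{variation}, so that the passage $x\mapsto\omega^\infty$ within the same $n$-cylinder keeps the empirical measure inside $\mathcal G$; you indicate this, and it is correct.
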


 \begin{lemma}\label{lowper0'}
Let $\ell\in\mathbb N$, $\vec{F}\in C(\Sigma_\beta)^\ell$, $\vec \Lambda\in\mathbb R^\ell$,
and let $\mathcal G$ be a non-empty open subset of $\mathcal M(K_\beta)$ of the form
\[\mathcal G=\{\nu\in  \mathcal M(K_\beta)\colon \langle\vec F, \nu\rangle>\vec \Lambda\}.\]
For any $\nu\in \mathcal G$ we have 
 \[
\liminf_{n\to\infty}\frac{1}{n}\log\widetilde\nu_{\beta,n}(\mathcal G)\geq -J_\beta(\nu).\]
 \end{lemma}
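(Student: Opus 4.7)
The plan is to mirror the proof of Lemma~\ref{lowper0}, interchanging the roles of $\alpha$ and $\beta$ throughout and invoking Lemma~\ref{gyak-lem}(b) and Lemma~\ref{lem-half}(b) in place of their (a) counterparts. If $J_\beta(\nu)=\infty$ the asserted bound is trivial, so I assume $\nu\in\mathcal M(K_\beta,\sigma_\beta|K_\beta)\cap\mathcal G$, viewed as an element of $\mathcal M(\Sigma_\beta,\sigma_\beta)$ via Remark~\ref{view-rem}.

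First I would reduce to the case that $\nu$ is ergodic with the strict inequality $\nu([\alpha])<1/2$. To bypass the boundary case $\nu([\alpha])=1/2$ (which is permitted by Lemma~\ref{lem-half}(b)), I would form the convex combination $\nu_s=(1-s)\nu+s\mu_\beta$ for small $s>0$. By affinity of the entropy, $h(\nu_s)\geq h(\nu)$; moreover $\nu_s([\alpha])=(1-s)\nu([\alpha])+s/(M+1)<1/2$, and $\nu_s\in\mathcal G$ for $s$ small. Applying the entropy density of the full shift $\Sigma_\beta$ (Lemma~\ref{e-dense-lem}) to $\nu_s$, I would obtain an ergodic $\nu'\in\mathcal G$ with $\nu'([\alpha])<1/2$ and $h(\nu')>h(\nu)-\varepsilon$; Lemma~\ref{gyak-lem}(b) then forces $\nu'\in\mathcal M(K_\beta,\sigma_\beta|K_\beta)$, so it suffices to establish the lower bound for such ergodic $\nu'$.

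For such a $\nu$ I would combine the Shannon--McMillan--Breiman theorem, Birkhoff's ergodic theorem applied to each $f_j$ and to the indicator of $[\alpha]$, and Lemma~\ref{variation} to extract a set $W_n\subset L_n(\Sigma_\beta)$ with $\#W_n\geq e^{n(h(\nu)-\varepsilon)}$ such that every $\omega\in W_n$ satisfies both (i) $[\omega]\subset\{x\in\Sigma_\beta\colon V_n(\sigma_\beta,x)\in\mathcal G\}$ and (ii) the number of $\alpha$'s among $\omega_0,\ldots,\omega_{n-1}$ is strictly less than $n/2$. Condition (ii) guarantees that the periodic extension $\omega^\infty$ lies in $K_\beta$, because the strict negative drift of $H_{\beta,\cdot}$ forces each $\alpha$ to find a matching $\beta_k$ within the next period, and further ensures that $\psi_\beta(\omega^\infty)\in{\rm Per}_{\beta,n}(\sigma)$; thus $\omega^\infty\in\phi_\beta({\rm Per}_{\beta,n}(\sigma))$. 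Combined with the upper bound $\#{\rm Per}_{\beta,n}(\sigma)<(M+1)^n$ from Lemma~\ref{per-lem}, this yields $\widetilde\nu_{\beta,n}(\mathcal G)\geq e^{n(h(\nu)-\varepsilon)}/(M+1)^n$, and letting $n\to\infty$ and then $\varepsilon\to 0$ delivers the required bound. The main obstacle I foresee is arranging (i) and (ii) simultaneously with no loss in the exponential rate: the $\omega$'s violating (ii) must be shown to be subexponentially few, which is exactly where the strict inequality $\nu([\alpha])<1/2$ enters via Birkhoff's theorem, and which is precisely what the preliminary perturbation by $\mu_\beta$ together with entropy density are designed to secure.
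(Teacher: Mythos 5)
Your proposal is correct, and most of the machinery (entropy density, Shannon--McMillan--Breiman, Birkhoff, Lemma~\ref{variation}, Lemma~\ref{gyak-lem}(b), the periodic-point count in Lemma~\ref{per-lem}) coincides with the paper's. The one genuine divergence is how you handle the critical frequency $1/2$: you perturb the measure up front. You mix $\nu$ with the maximal-entropy Bernoulli measure on $\Sigma_\beta$ (you write $\mu_\beta$, but you mean $\nu_\beta=\mu_\beta\circ\phi_\beta^{-1}$), re-apply entropy density, and thereby restrict attention to ergodic $\nu'$ with the \emph{strict} inequality $\nu'([\alpha])<1/2$. From there Birkhoff's theorem alone forces the shadowing words to have strictly fewer than $n/2$ symbols $\alpha$, hence strictly negative drift, and so their periodic continuations land in $\phi_\beta({\rm Per}_{\beta,n}(\sigma))$. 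The paper instead only reduces to the ergodic case (where $\nu([\alpha])$ may be exactly $1/2$), and then appends a block $\beta_1^{\lfloor 3\varepsilon n\rfloor}$ to each SMB-word to push the symbol frequency strictly past $1/2$ at the word level, at the cost of a $1/(1+3\varepsilon)$ factor that is subsequently killed by letting $\varepsilon\to 0$. Both routes are valid; yours front-loads the work into the measure-approximation phase and leaves the word-counting phase direct, while the paper's leaves the measure-approximation phase as in the non-ergodic reduction and does the correction at the word level.

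Two small points worth tidying if you write this up. First, your remark that "the $\omega$'s violating (ii) must be shown to be subexponentially few" is not quite the right framing: you do not need to control the \emph{count} of violating words at all; you only need that their \emph{$\nu'$-measure} tends to zero (by Birkhoff), so the SMB set minus those words still carries at least $e^{(h(\nu')-\varepsilon)n}$ cylinders. Second, be explicit that the approximating ergodic measure produced by Lemma~\ref{e-dense-lem} can be taken inside the open set $\{\nu\colon\langle\vec F,\nu\rangle>\vec\Lambda,\ \nu([\alpha])<1/2\}$ of $\mathcal M(\Sigma_\beta)$, since this is a weak*-open set defined by finitely many continuous test functions (including the indicator of the $1$-cylinder $[\alpha]$); that is what lets you combine "in $\mathcal G$" and the strict $1/2$-bound simultaneously.
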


Below we only give a proof of Lemma~\ref{lowper0}  since that of Lemma~\ref{lowper0'} is analogous.

\begin{proof}[Proof of Lemma~\ref{lowper0}]
%\noindent{\it Step 1: reduction to ergodic measures.}
First we show that it suffices to prove the desired inequality only for ergodic measures. Then we prove this inequality for each  ergodic measure. 

 Let $\nu\in \mathcal G$.  
 If $\nu\notin \mathcal M(K_\alpha,\sigma_\alpha|K_\alpha)$
  then $J_\alpha(\nu)=\infty$  and the desired inequality holds trivially.
 Suppose $\nu\in \mathcal M(K_\alpha,\sigma_\alpha|K_\alpha)$. As in Remark~\ref{view-rem}, we regard $\nu$ as an element of $\mathcal M(\Sigma_\alpha,\sigma_\alpha)$.
 Suppose that $\nu$ is not ergodic with respect to $\sigma_\alpha$. 
Since the indicator function of the cylinder set  
$[\beta]$ and all components of $\vec F$ are continuous functions, 
 by Lemma~\ref{e-dense-lem},
 for any $\varepsilon>0$
 there exists an ergodic measure $\nu'\in  \mathcal M(\Sigma_\alpha,\sigma_\alpha)$ such that \begin{equation}\label{Er-I}h(\nu')>h(\nu)-\varepsilon,\end{equation}
 \begin{equation}\label{Er-II}
  \nu'([\beta]) <\nu([\beta])+\varepsilon \ \text{ and }\ 
  \langle\vec F,\nu'\rangle>\langle \vec F,\nu\rangle-\vec\varepsilon.\end{equation}
The first inequality in \eqref{Er-II} and Lemma~\ref{lem-half} give
  \begin{equation}\label{Er-IV}\nu'([\beta]) <\frac{1}{2}+\varepsilon.\end{equation}
  In particular, $\nu'$ may give measure $1/2$ to $[\beta]$ and so may not give
   measure $1$ to $K_\alpha$. We construct another shift-invariant ergodic measure that gives measure $1$ to $K_\alpha$ and is contained in $\mathcal G$. 
Let 
$\alpha_1^\infty$ denote the fixed point of $\sigma_\alpha$ in the cylinder set $[\alpha_1]$. 
Consider the convex combination
\[\nu''=\frac{1}{1+3\varepsilon}\cdot\nu'+\frac{3\varepsilon}{1+3\varepsilon}\cdot\delta_{\alpha_1^\infty}\in \mathcal M(\Sigma_\alpha,\sigma_\alpha).\]
We have
\begin{equation}\label{erg-II''}h(\nu'')=\frac{1}{1+3\varepsilon}\cdot h(\nu),\end{equation}
and by \eqref{Er-IV},
\begin{equation}\label{erg-II'}\nu''([\beta])=\frac{1}{1+3\varepsilon}\cdot\nu'([\beta])\leq\frac{1}{1+3\varepsilon}\cdot\left(\frac{1}{2}+\varepsilon\right)<\frac{1}{2},\end{equation}
and by
 the second inequality in \eqref{Er-II}, 
\begin{equation}\label{erg-II} \begin{split}
 \langle\vec F,\nu''\rangle&\geq\frac{1}{1+3\varepsilon}\cdot\langle\vec F,\nu'\rangle-\frac{3\|\vec F\|\varepsilon}{1+3\varepsilon}\cdot\vec 1\\&\geq\frac{1}{1+3\varepsilon}\cdot\langle\vec F,\nu\rangle-\frac{3\| \vec F\|\varepsilon+\varepsilon}{1+3\varepsilon}\cdot\vec 1\\
 &\geq\langle \vec F,\nu\rangle-\frac{6\|\vec F\|\varepsilon+\varepsilon}{1+3\varepsilon}\cdot\vec 1.\end{split}\end{equation}
  If $\varepsilon>0$ is small enough so that 
 $\langle\vec F,\nu\rangle-\frac{6\|\vec F\|\varepsilon+\varepsilon}{1+3\varepsilon}\cdot\vec 1>\vec \Lambda$,
then by \eqref{erg-II''}, \eqref{erg-II'}, \eqref{erg-II} and Lemma~\ref{e-dense-lem} applied to $\nu''$, there exists an ergodic measure $\nu'''\in \mathcal M(\Sigma_\alpha,\sigma_\alpha)$ such that 
\begin{equation}\label{erg-IIIII'}h(\nu''')\geq\frac{1}{1+4\varepsilon}\cdot h(\nu),\end{equation}
 \begin{equation}\label{erg-III} \nu'''([\beta])<\frac{1}{2} \ \text{ and } \
 \langle\vec F,\nu'''\rangle>\vec \Lambda.\end{equation}
 By Lemma~\ref{gyak-lem} and
 the first inequality in \eqref{erg-III}
 we have $\nu'''(K_\alpha)=1$,  and so from the second inequality in \eqref{erg-III} we obtain
$\nu'''\in \mathcal G$.
Since $\varepsilon>0$ is arbitrary, by \eqref{erg-IIIII'}
  it suffices to show the desired inequality in Lemma~\ref{lowper0} for any element of $\mathcal G\cap\mathcal M(K_\alpha,\sigma_\alpha|K_\alpha)$ that is ergodic.
  %\medskip

%\noindent{\it Step 2: a lower bound for ergodic measures.}
Let
$\nu\in \mathcal G\cap\mathcal M(K_\alpha,\sigma_\alpha|K_\alpha)$ be ergodic, and 
let $\varepsilon>0$ satisfy
\begin{equation}\label{lower-e10}\langle\vec F,\nu\rangle-6\|\vec F\|\cdot\vec\varepsilon-\vec\varepsilon>\vec \Lambda.\end{equation}
As in Remark~\ref{view-rem}, we regard $\nu$ as an element of $\mathcal M(\Sigma_\alpha,\sigma_\alpha)$.
Since the collection of $1$-cylinders in $\Sigma_\alpha$ forms a generator, 
by Birkhoff's ergodic theorem and Shannon-McMillan-Breiman's theorem, for all sufficiently large $n>1/(3\varepsilon)$ there exists a subset $E_n$ of $L_n(\Sigma_\alpha)$
such that 
\begin{equation}\label{lower-e20}\#E_n\geq\exp((h(\nu)-\varepsilon)n),\end{equation}
and for all $x\in\bigcup_{\omega\in E_n}[\omega]$,
\begin{equation}\label{lower-e-13} V_n(\sigma_\alpha,x)([\beta])<
\nu([\beta])+\varepsilon\end{equation}
and
\begin{equation}\label{lower-e30}\langle \vec F,V_n(\sigma_\alpha,x)\rangle>
\langle \vec F,\nu\rangle-\vec\varepsilon.\end{equation} 

By \eqref{lower-e-13} and Lemma~\ref{lem-half}, for all $x\in\bigcup_{\omega\in E_n}[\omega]$ we have
\begin{equation}\label{lower-e-10} V_n(\sigma_\alpha,x)([\beta])<\frac{1}{2}+\varepsilon.\end{equation}
Since 
$V_n(\sigma_\alpha,x)$ may not belong to $\mathcal M(K_\alpha)$, we slightly reduce the cylinder sets spanned by elements of $E_n$ to decrease the relative frequency of the symbol $\beta$.
Let $\alpha_1^{\lfloor3\varepsilon n\rfloor}$ denote the $\lfloor3\varepsilon n\rfloor$-fold concatenation of $\alpha_1\in D_\alpha$ and
let \[E_n'=\{\omega\alpha_1^{\lfloor3\varepsilon n\rfloor}\colon\omega\in E_n\}.\] 
%where  $\lfloor3\varepsilon n\rfloor$ denotes the smallest positive integer not exceeding $3\varepsilon n$.
By \eqref{lower-e-10}, for any $x\in\bigcup_{\omega\in E_n'}[\omega]$ we have
\begin{equation}\label{lower-e50} V_{n+\lfloor3\varepsilon n\rfloor}(\sigma_\alpha,x)([\beta])<\frac{1}{2}\frac{(1+2\varepsilon)n}{n+\lfloor3\varepsilon n\rfloor}<
\frac{1}{2}.\end{equation}
For any $x\in\bigcup_{\omega\in E_n'}[\omega]$ and every $1\leq j\leq\ell$ we have 
\[\begin{split}\langle f_j,V_{n+\lfloor3\varepsilon n\rfloor}(\sigma_\alpha,x)\rangle=&\frac{1}{n+\lfloor3\varepsilon n\rfloor}\sum_{i=0}^{n+\lfloor3\varepsilon n\rfloor-1} f_j(\sigma_\alpha^{i}x)\\
=&\frac{1}{n}\sum_{i=0}^{n-1} f_j(\sigma_\alpha^{i}x)-\frac{\lfloor3\varepsilon n\rfloor}{n(n+\lfloor3\varepsilon n\rfloor)}\sum_{i=0}^{n-1} f_j(\sigma_\alpha^{i}x)\\
&+\frac{1}{n+\lfloor3\varepsilon n\rfloor}\sum_{i=n}^{n+\lfloor3\varepsilon n\rfloor-1} f_j(\sigma_\alpha^{i}x)\\
\geq& \frac{1}{n}\sum_{i=0}^{n-1} f_j(\sigma_\alpha^{i}x)
-6 \|f_j\|\varepsilon.\end{split}\]
For any $\omega\in E_n'$ and any $x\in[\omega]$ there exists 
$y(x)\in\Sigma_\alpha$ such that  $y(x)\in[x_0\cdots x_{n-1}]$. 
%$x\in[E_n]$ such that $x_i=x_i'$ for $0\leq i\leq n-1$.
%From this inequality and
By Lemma~\ref{variation} and 
\eqref{lower-e10},
for all sufficiently large $n>1/(3\varepsilon)$ we have
\begin{equation}\label{lower-e40}\begin{split}\langle \vec F,V_{n+\lfloor3\varepsilon n\rfloor}(\sigma_\alpha,x)\rangle&\geq\langle \vec F,V_{n}(\sigma_\alpha,x)\rangle-6\|\vec F\|\cdot\vec\varepsilon\\
&>
\langle \vec F,V_{n}(\sigma_\alpha,y(x))\rangle-6\|\vec F\|\cdot\vec\varepsilon-\vec\varepsilon>\vec \Lambda.\end{split}\end{equation}

By \eqref{lower-e50} and Lemma~\ref{gyak-lem}, for all $x\in\bigcup_{\omega\in E_n'}[\omega]$ we have
 $V_{n+\lfloor3\varepsilon n\rfloor}(\sigma_\alpha,x)(K_\alpha)=1$, and so
 $V_{n+\lfloor3\varepsilon n\rfloor}(\sigma_\alpha,x)\in \mathcal G$
by \eqref{lower-e40}.
Moreover, for each $\omega\in E_{n}'$ the cylinder $[\omega]$ contains a point from $\phi_{\alpha}({\rm Per}_{\alpha0,n+\lfloor3\varepsilon n\rfloor}(\sigma))$.
Together with \eqref{lower-e20} 
we obtain 
  \[\begin{split}\#\left\{x\in \phi_\alpha({\rm Per}_{\alpha0,n+\lfloor3\varepsilon n\rfloor}(\sigma))\colon          V_{n+\lfloor3\varepsilon n\rfloor}(\sigma_\alpha,x)\in \mathcal G\right\}&\geq\#E_n'\\&\geq\exp(
 (h(\nu)-\varepsilon)n).\end{split}\]
Taking logarithms, dividing by $n+\lfloor3\varepsilon n\rfloor$ and then letting $n\to\infty$  yields
 \[\begin{split}\liminf_{n\to\infty}\frac{1}{n+\lfloor3\varepsilon n\rfloor}\log\widetilde\nu_{\alpha0,n+\lfloor3\varepsilon n\rfloor}( \mathcal G)&\geq\\
 \liminf_{n\to\infty}\frac{1}{n+\lfloor3\varepsilon n\rfloor}\log\#&\left\{x\in \phi_\alpha({\rm Per}_{\alpha0,n+\lfloor3\varepsilon n\rfloor}(\sigma))\colon          V_{n+\lfloor3\varepsilon n\rfloor}(\sigma_\alpha,x)\in \mathcal G\right\}\\
 &\hspace{0.5cm} -\limsup_{n\to\infty}\frac{1}{n}\log\#{\rm Per}_{\alpha0,n+\lfloor3\varepsilon n\rfloor}(\sigma)\\
 &\geq \frac{1}{1+3\varepsilon  }\cdot h(\nu)-\varepsilon-\log(M+1).\end{split}\]
The first inequality follows from the injectivity of $\phi_\alpha$. To estimate the exponential growth rate of $\#{\rm Per}_{\alpha0,n+\lfloor3\varepsilon n\rfloor}(\sigma)$ we have used 
 Lemmas~\ref{per-lem} and \ref{per-eq0}.
 For each sufficiently large $m\in\mathbb N$ there exists 
 %unique positive integer 
 $n(m)\in\mathbb N$ such that $n(m)+\lfloor3\varepsilon n(m)\rfloor=m$. Hence we obtain
 \[\begin{split}\liminf_{m\to\infty}\frac{1}{m}\log\widetilde\nu_{\alpha0,m }( \mathcal G)\geq \frac{1}{1+3\varepsilon  }\cdot h(\nu)-\varepsilon-\log (M+1).\end{split}\]
Finally, letting $\varepsilon\to0$ yields the desired inequality in Lemma~\ref{lowper0}.
  \end{proof}

\subsection{Intermediate upper bounds}\label{upper-sec}
The next two lemmas give large deviations upper bounds for particular closed sets.
 \begin{lemma}\label{uplem0}
Let $\ell\in\mathbb N$, $\vec{F}\in C(\Sigma_\alpha)^\ell$, $\vec \Lambda\in\mathbb R^\ell$
and let $\mathcal C$ be a non-empty closed 
subset of $\mathcal M(K_\alpha)$ of the form
\[\mathcal C=\{\nu\in \mathcal M(K_\alpha)\colon \langle\vec F, \nu\rangle\geq\vec \Lambda\}.\]
For any $\varepsilon>0$ there exists $n(\varepsilon)\geq1$ such that if $n\geq n(\varepsilon)$ and $\widetilde\nu_{\alpha0,n}(\mathcal C)>0$ then there exists $\nu_n\in \mathcal M(K_\alpha,\sigma_\alpha|K_\alpha)$ such that
\[\langle\vec F,\nu_n\rangle> \vec{\Lambda}-\vec{\varepsilon}
\ \text{ and } \ 
\frac{1}{n}\log\widetilde\nu_{\alpha0,n}(\mathcal C)\leq -J_\alpha(\nu_n)+\varepsilon.\]
\end{lemma}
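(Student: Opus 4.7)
The plan is to adapt Takahashi's counting argument for Markov shifts~\cite{Tak84} to the restricted setting on $K_\alpha$, combined with the perturbation trick from the proof of Lemma~\ref{lowper0}. Fix $\varepsilon>0$ and a large $n$ with $\widetilde\nu_{\alpha0,n}(\mathcal C)>0$, and set $N=\widetilde\nu_{\alpha0,n}(\mathcal C)\cdot\#\phi_\alpha({\rm Per}_{\alpha0,n}(\sigma))$. Each of the $N$ points is an $n$-periodic point of $\sigma_\alpha$ and is determined by its $n$-word, so $N=|W_n|$ for some $W_n\subset L_n(\Sigma_\alpha)$. Fix a large auxiliary integer $k=k(\varepsilon)$ and partition $W_n$ by cyclic $(k+1)$-block empirical distribution (type); there are at most $(n+1)^{(M+1)^{k+1}}$ such types, so by pigeonhole some class $W_n^*$ satisfies $|W_n^*|\geq N(n+1)^{-(M+1)^{k+1}}$.

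Next I would associate to $W_n^*$ a smoothed invariant measure. Let $\eta_n\in\mathcal M(\Sigma_\alpha,\sigma_\alpha)$ be the $k$-step Markov measure whose $(k+1)$-block distribution is the common type of $W_n^*$. By the standard method-of-types (Shannon--Stirling) bound on the number of $n$-words with prescribed $(k+1)$-block type,
$\tfrac{1}{n}\log|W_n^*|\leq h(\eta_n)+\varepsilon/4$
for $k$ and $n$ large, hence $\tfrac{1}{n}\log N\leq h(\eta_n)+\varepsilon/3$. Set $\mu_n^*=|W_n^*|^{-1}\sum_{\omega\in W_n^*}V_n(\sigma_\alpha,x_\omega)\in\mathcal C\cap\mathcal M(K_\alpha,\sigma_\alpha|K_\alpha)$; this measure shares the $(k+1)$-block marginal of $\eta_n$. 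Uniform continuity of each component of $\vec F$ on the compact space $\Sigma_\alpha$ yields $|\langle\vec F,\eta_n\rangle-\langle\vec F,\mu_n^*\rangle|<\vec\varepsilon/4$ once $k$ is large, so $\langle\vec F,\eta_n\rangle>\vec\Lambda-\vec\varepsilon/4$. Lemma~\ref{lem-half} applied to $\mu_n^*$ gives $\mu_n^*([\beta])\leq1/2$, and therefore $\eta_n([\beta])\leq1/2$.

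Finally I would replay the perturbation from the proof of Lemma~\ref{lowper0} verbatim: pass first to $\eta_n'=(1-\tau)\eta_n+\tau\delta_{\alpha_1^\infty}$ for a small $\tau=\tau(\varepsilon)>0$ to force $\eta_n'([\beta])<1/2$ while losing only $\varepsilon/4$ in entropy and $\vec\varepsilon/4$ in the $\vec F$-integrals, then apply the entropy density of the full shift (Lemma~\ref{e-dense-lem}) to produce an ergodic $\nu_n\in\mathcal M(\Sigma_\alpha,\sigma_\alpha)$ with $\nu_n([\beta])<1/2$, $\langle\vec F,\nu_n\rangle>\vec\Lambda-\vec\varepsilon$, and $h(\nu_n)\geq h(\eta_n')-\varepsilon/4$. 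By Lemma~\ref{gyak-lem}(a), $\nu_n(K_\alpha)=1$, so $\nu_n\in\mathcal M(K_\alpha,\sigma_\alpha|K_\alpha)$. Using Lemma~\ref{per-lem} to obtain $\tfrac{1}{n}\log\#\phi_\alpha({\rm Per}_{\alpha0,n}(\sigma))\geq\log(M+1)-\varepsilon/4$ for $n$ large, we conclude
\[\tfrac{1}{n}\log\widetilde\nu_{\alpha0,n}(\mathcal C)=\tfrac{1}{n}\log N-\tfrac{1}{n}\log\#\phi_\alpha({\rm Per}_{\alpha0,n}(\sigma))\leq h(\nu_n)-\log(M+1)+\varepsilon=-J_\alpha(\nu_n)+\varepsilon.\]
The hard part is the Shannon--Stirling entropy estimate for $n$-words of prescribed $(k+1)$-block type and its identification with the entropy $h(\eta_n)$ of the Markov smoothing; the perturbation and entropy-density steps are direct analogues of those in the proof of Lemma~\ref{lowper0}.
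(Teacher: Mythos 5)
Your proposal is correct, and it reaches the conclusion by a genuinely different route than the paper. Where you pigeonhole the periodic $n$-words in $\mathcal C$ by their cyclic $(k+1)$-block empirical type and smooth the dominant type class with a $k$-step Markov measure $\eta_n$ (paying a method-of-types/Shannon--Stirling bound $\tfrac1n\log|W_n^*|\lesssim h(\eta_n)$), the paper instead collects these $n$-words into a single set $G_n$, forms the full shift over the finite alphabet $G_n$ under $\sigma_\alpha^n$, takes its Bernoulli measure of maximal entropy $\hat\nu_n$, and averages over the $n$ shifts to obtain an ergodic $\nu_n\in\mathcal M(\Sigma_\alpha,\sigma_\alpha)$ with the \emph{exact} identity $h(\nu_n)=\tfrac1n\log\#G_n$. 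The two constructions then converge: both obtain $\nu_n([\beta])\le 1/2$ from Lemma~\ref{lem-half} (the paper via $V_n(\sigma_\alpha,\cdot)([\beta])\le 1/2$ on each $[\omega]$, you via the shared $1$-block marginal), both control $\langle\vec F,\cdot\rangle$ by Lemma~\ref{variation} or uniform continuity on block marginals, and both finish with the identical perturbation: convex-combine with $\delta_{\alpha_1^\infty}$ to force $\nu([\beta])<1/2$, then apply entropy density (Lemma~\ref{e-dense-lem}) and Lemma~\ref{gyak-lem} to land in $\mathcal M(K_\alpha,\sigma_\alpha|K_\alpha)$. The paper's construction is slightly more economical because it avoids the extra parameter $k$ and avoids the combinatorial entropy bound for type classes, giving an exact entropy count rather than an $\varepsilon$-approximate one; your version is closer to the classical Sanov/Takahashi type-counting template and arguably makes the combinatorics more transparent, at the cost of a second limit. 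Both are sound.
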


\begin{lemma}\label{uplem0'}
Let $\ell\in\mathbb N$, $\vec{F}\in C(\Sigma_\beta)^\ell$, $\vec \Lambda\in\mathbb R^\ell$
and let $\mathcal C$ be a non-empty closed 
subset of $\mathcal M(K_\beta)$ of the form
\[\mathcal C=\{\nu\in \mathcal M(K_\beta)\colon \langle\vec F,\nu\rangle\geq\vec \Lambda\}.\]
For any $\varepsilon>0$ there exists $n(\varepsilon)\geq1$ such that if $n\geq n(\varepsilon)$ and $\widetilde\nu_{\beta,n}(\mathcal C)>0$ then there exists $\nu_n\in \mathcal M(K_\beta,\sigma_\beta|K_\beta)$ such that
\[\langle\vec F,\nu_n\rangle> \vec{\Lambda}-\vec{\varepsilon}
\ \text{ and } \ 
\frac{1}{n}\log\widetilde\nu_{\beta,n}(\mathcal C)\leq -J_\beta(\nu_n)+\varepsilon.\]
\end{lemma}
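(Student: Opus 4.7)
My plan is to adapt the Markov-shift argument of \cite{Tak84} by constructing an explicit $\sigma_\beta$-ergodic measure on $\Sigma_\beta$ as a Bernoulli concatenation of admissible periodic blocks. I will set $P_n=\{x\in\phi_\beta({\rm Per}_{\beta,n}(\sigma))\colon V_n(\sigma_\beta,x)\in\mathcal C\}$ and, using that distinct period-$n$ points lie in distinct $n$-cylinders, identify $P_n$ with $W_n:=\{x_0\cdots x_{n-1}\colon x\in P_n\}\subset L_n(\Sigma_\beta)$, so that $\#W_n=\#P_n$.

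For the construction, let $\hat\nu_n$ denote the $\sigma_\beta^n$-invariant Bernoulli measure on $\Sigma_\beta$ obtained by independently and uniformly sampling blocks from $W_n$ (concatenation is unrestricted since $\Sigma_\beta$ is a full shift), and set $\nu_n=(1/n)\sum_{j=0}^{n-1}\hat\nu_n\circ\sigma_\beta^{-j}\in\mathcal M(\Sigma_\beta,\sigma_\beta)$. Since every $\sigma_\beta$-invariant Borel set is $\sigma_\beta^n$-invariant and therefore of $\hat\nu_n$-measure $0$ or $1$, the measure $\nu_n$ will be $\sigma_\beta$-ergodic, and Abramov's formula will yield $h(\sigma_\beta,\nu_n)=(1/n)h(\sigma_\beta^n,\hat\nu_n)=(1/n)\log\#W_n$. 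Each $x\in{\rm Per}_{\beta,n}(\sigma)$ has $H_n(x)<0$, so after applying $\phi_\beta$ each block in $W_n$ contains strictly fewer $\alpha$'s than $\beta_k$'s; averaging gives $\nu_n([\alpha])\leq(n-1)/(2n)<1/2$, and Lemma~\ref{gyak-lem}(b) combined with ergodicity then forces $\nu_n(K_\beta)=1$, i.e.\ $\nu_n\in\mathcal M(K_\beta,\sigma_\beta|K_\beta)$.

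For the observable constraint, Birkhoff's theorem applied to the ergodic $\nu_n$ will compute $\langle f_j,\nu_n\rangle$ as the $W_n$-average of $\langle f_j,V_n(\sigma_\beta,z_\omega)\rangle$, where $z_\omega$ is an $\hat\nu_n$-generic point in $[\omega]$; by Lemma~\ref{variation}, this differs by less than $\varepsilon$ from the analogous average over periodic representatives $x_\omega\in[\omega]\cap P_n$, each of which satisfies $\langle f_j,V_n(\sigma_\beta,x_\omega)\rangle\geq\lambda_j$ since $V_n(\sigma_\beta,x_\omega)\in\mathcal C$. Hence $\langle\vec F,\nu_n\rangle>\vec\Lambda-\vec\varepsilon$ for $n$ sufficiently large. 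Combining with the bound $\#\phi_\beta({\rm Per}_{\beta,n}(\sigma))\geq(1/3)(M+1)^n$ from Lemma~\ref{per-lem} and the identity $\#W_n=\#P_n$,
\[\frac{1}{n}\log\widetilde\nu_{\beta,n}(\mathcal C)\leq\frac{1}{n}\log\#P_n-\log(M+1)+\frac{\log 3}{n}=h(\sigma_\beta,\nu_n)-\log(M+1)+O(1/n)=-J_\beta(\nu_n)+O(1/n),\]
which is at most $-J_\beta(\nu_n)+\varepsilon$ once $n\geq n(\varepsilon)$.

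The main obstacle will be ensuring $\nu_n(K_\beta)=1$: the naive orbit-averaged measure on $P_n$ has zero entropy and so cannot carry the required entropy $(1/n)\log\#P_n$, which forces the Bernoulli-concatenation construction; ergodicity of the shift-averaged measure together with the strict inequality $\nu_n([\alpha])<1/2$ coming from the positive-multiplier condition defining ${\rm Per}_{\beta,n}(\sigma)$ are both essential for invoking Lemma~\ref{gyak-lem}(b). A secondary technical point is controlling block-boundary effects on the continuous observables via Lemma~\ref{variation}.
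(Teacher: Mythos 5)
Your proof is correct, and the core Bernoulli-concatenation construction (uniform iid blocks from $W_n$, shift-average $\nu_n=(1/n)\sum_{i}\hat\nu_n\circ\sigma_\beta^{-i}$, entropy $h(\nu_n)=(1/n)\log\#W_n$) is the same one the paper uses in its explicit proof of Lemma~\ref{uplem0}. The genuine difference is that you spotted a simplification specific to the $\beta$ side: because ${\rm Per}_{\beta,n}(\sigma)$ is defined by the \emph{strict} inequality $H_n(x)<0$, every block $\omega\in W_n$ has at most $(n-1)/2$ symbols equal to $\alpha$ after applying $\phi_\beta$, so the shift-averaged measure already satisfies $\nu_n([\alpha])\le(n-1)/(2n)<1/2$ strictly. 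Together with ergodicity of $\nu_n$ this lets you invoke Lemma~\ref{gyak-lem}(b) directly. In the paper's proof of Lemma~\ref{uplem0}, by contrast, ${\rm Per}_{\alpha0,n}$ includes the neutral periodic points with $H_n=0$, so the bound is only the non-strict $\nu_n([\beta])\le 1/2$; the paper must then perturb with $\alpha_1^\infty$ to break the tie, which destroys ergodicity, and then appeal to entropy density (Lemma~\ref{e-dense-lem}) to recover an ergodic witness $\nu_n''$. Your proof dispenses with this two-step repair entirely, which is a cleaner argument for the $\beta$ case than what one would get by transcribing the paper's $\alpha$-case proof ``analogously.'' Two cosmetic quibbles: the appeal to Birkhoff's theorem for the observable bound is superfluous --- the identity $\langle f_j,\nu_n\rangle=\int\langle f_j,V_n(\sigma_\beta,\cdot)\,\rangle\,{\rm d}\hat\nu_n$ together with Lemma~\ref{variation} and $V_n(\sigma_\beta,\overline\omega)\in\mathcal C$ already gives $\langle\vec F,\nu_n\rangle>\vec\Lambda-\vec\varepsilon$; and the entropy identity $h(\sigma_\beta,\nu_n)=(1/n)h(\sigma_\beta^n,\hat\nu_n)$ is the standard power-of-a-transformation fact combined with the fact that the $n$ translates of $\hat\nu_n$ all have equal $\sigma_\beta^n$-entropy, not Abramov's formula for induced maps (though the conclusion is the same).
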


Below we only give a proof of Lemma~\ref{uplem0} since that of Lemma~\ref{uplem0'} is analogous.
\begin{proof}[Proof of Lemma~\ref{uplem0}]
Let $n\in\mathbb N$. For each $\omega\in L_n(\Sigma_\alpha)$
let $\overline{\omega}$
denote the element of ${\rm Per}_n(\sigma_\alpha)$ that is contained in $[\omega]$.
Let 
\[G_n=\{\omega\in L_n(\Sigma_\alpha) \colon\overline{\omega }\in \phi_\alpha({\rm Per}_{\alpha0,n}(\sigma))\text{ and } V_n(\sigma_\alpha,\overline{\omega})\in\mathcal C\}.\]

For each $n\in\mathbb N$ we set 
$\hat\sigma_\alpha=\sigma_\alpha^n$
and
 \[\Delta=\bigcap_{i=-\infty}^\infty  \hat\sigma_\alpha^{-i}\left(\bigcup_{\omega\in G_n}[\omega]\right).\]
The restriction of $\hat\sigma_\alpha$ to $\Delta$ is topologically 
 conjugate to the left shift on the full shift over the finite alphabet $G_n$ in the obvious way. Let $\hat\nu_n\in \mathcal M(\Delta,\hat\sigma_\alpha|\Delta)$ denote the measure of maximal entropy
 $\log\#G_n$.
The measure
$\nu_n = (1/n)\sum_{i=0}^{n-1}\hat\nu_n\circ \sigma_\alpha^{-i}$
%$\nu_n = (1/(n+\lfloor2\varepsilon n\rfloor))\sum_{i=0}^{n+\lfloor2\varepsilon n\rfloor-1}\hat\nu_n\circ \sigma_\alpha^{-i}$ 
belongs to $\mathcal M(\Sigma_\alpha,\sigma_\alpha)$, is ergodic
and satisfies 
\[h(\nu_n)n=
h(\hat\nu_n)=\log\#G_n.\]
%\[h(\nu_n)(n+\lfloor2\varepsilon n\rfloor)=h(\hat\nu_n)=\log\#G_n'.\]
For every $\omega\in G_n$ we have $\psi_\alpha(\overline\omega)\in{\rm Per}_{\alpha0,n}(\sigma)$, and so
$\sup_{x\in[\omega]} V_n(\sigma_\alpha,x)([\beta])\leq1/2$.
Since $\Delta$ is a closed set,
it follows that
\begin{equation}\label{follow1}\nu_n([\beta])\leq\frac{1}{2}.\end{equation}
By Lemma~\ref{variation}, for any $\varepsilon>0$ there exists $n(\varepsilon)\geq1$ such that if $n\geq n(\varepsilon)$ then 
 $\langle\vec F,V_{n}(\sigma_\alpha,x)\rangle\geq\vec \Lambda-\vec\varepsilon$ for all
$x\in\bigcup_{\omega\in G_n}[\omega]$.
This yields
\begin{equation}\label{follow2}\langle\vec F,\nu_{n}\rangle\geq\vec \Lambda-\vec\varepsilon.\end{equation}

Since the inequality in \eqref{follow1} may not be strict, $\nu_n$ may not give measure $1$ to $K_\alpha$.
 We construct another shift-invariant ergodic measure that gives measure $1$ to $K_\alpha$. 
Let 
$\alpha_1^\infty$ denote the fixed point of $\sigma_\alpha$ in $[\alpha_1]$.
Consider the convex combination
\[\nu_n'=(1-\varepsilon)\cdot\nu_n+\varepsilon\cdot\delta_{\alpha_1^\infty}\in \mathcal M(\Sigma_\alpha,\sigma_\alpha).\]
We have
\begin{equation}\label{erg-II''''}h(\nu_n')=(1-\varepsilon)\cdot h(\nu_n),\end{equation}
and by \eqref{follow1},
\begin{equation}\label{erg-II'''}\nu_n'([\beta])=(1-\varepsilon)\cdot\nu_n([\beta])<\frac{1}{2}.\end{equation}
and by \eqref{follow2}, \begin{equation}\label{erg-VI} \begin{split}
 \langle\vec F,\nu_n'\rangle&\geq(1-\varepsilon)\cdot\langle\vec F,\nu_n\rangle-\|\vec F\|\cdot\vec\varepsilon\\
 &\geq\langle \vec F,\nu_n\rangle-2\|\vec F\|\cdot\vec\varepsilon\geq\vec\Lambda-2\|\vec F\|\cdot\vec\varepsilon-\vec\varepsilon.\end{split}\end{equation}
By \eqref{erg-II''''}, \eqref{erg-II'''}, \eqref{erg-VI} and Lemma~\ref{e-dense-lem} applied to $\nu_n'$, there exists an ergodic measure $\nu_n''\in \mathcal M(\Sigma_\alpha,\sigma_\alpha)$ such that 
\begin{equation}\label{erg-III'}h(\nu_n'')\geq(1-\varepsilon)\cdot h(\nu_n)-\varepsilon,\end{equation}
 \begin{equation}\label{erg-IIII} \nu_n''([\beta])<\frac{1}{2} \ \text{ and } \
 \langle\vec F,\nu_n''\rangle>\vec\Lambda-2\|\vec F\|\cdot\vec\varepsilon-2\cdot\vec\varepsilon.\end{equation}
By the first inequality in \eqref{erg-IIII} and Lemma~\ref{gyak-lem}, $\nu_n''$ belongs to $\mathcal M(K_\alpha,\sigma_\alpha|K_\alpha)$.
%By \eqref{frequency} we have \[\max\left\{V_{n+\lfloor2\varepsilon n\rfloor}(\sigma_\alpha,x)([\beta])\colon x\in \bigcup_{\omega\in G_n'}[\omega]\right\}\leq\frac{1}{2(1+\varepsilon)},\]  which implies $\nu_n([\beta])\leq 1/(2(1+\varepsilon))<1/2$ and $\nu_n(K_\alpha)=1$ by Lemma~\ref{gyak-lem}. Hence
%Since all components of $\vec F$ belong to $C(\Sigma_\alpha)$, by Lemma~\ref{variation}, for any $\varepsilon>0$ there exists $n(\varepsilon)\geq1$ such that if $n\geq n(\varepsilon)$ then for all $x\in\bigcup_{\omega\in G_n'}[\omega]$, \[\int\vec F {\rm d}V_{n+\lfloor2\varepsilon n\rfloor}(\sigma_\alpha,x)\geq\vec \Lambda-\frac{1}{2}\cdot\vec\varepsilon.\]  If $n\geq n(\varepsilon)$ is sufficiently large then we obtain \[\int\vec F {\rm d}\nu_{n}>\vec \Lambda-\vec\varepsilon,\] 
Moreover we have
\[\begin{split}
\frac{1}{n}\log\widetilde\nu_{\alpha0,n }(\mathcal C)
=&\frac{1}{n }\log\#G_n-\frac{1}{n }\log\#\phi_\alpha({\rm Per}_{\alpha0,n }(\sigma))\\
=& h(\nu_n)-\frac{1}{n}\log\#{\rm Per}_{\alpha0,n}(\sigma)\\
\leq& \frac{1}{1-\varepsilon}\cdot h(\nu_n'')+\frac{\varepsilon}{1-\varepsilon} -\frac{1}{n}\log\#{\rm Per}_{\alpha0,n}(\sigma)\\
=& -J_{\alpha}(\nu_n'')+\frac{\varepsilon}{1-\varepsilon}\cdot h(\nu_n'')+\frac{\varepsilon}{1-\varepsilon}+\log(M+1)-\frac{1}{n}
\log\#{\rm Per}_{\alpha0,n}(\sigma)\\
\leq&  -J_{\alpha}(\nu_n'')+\frac{3\varepsilon}{1-\varepsilon}(\log(M+1)+1).
\end{split}\]
%\[\begin{split}\frac{1}{n+ \lfloor2\varepsilon n\rfloor}\log\widetilde\nu_{\alpha0,n+\lfloor2\varepsilon n\rfloor}(\mathcal C) =&\frac{1}{n+\lfloor2\varepsilon n\rfloor}\log\#G_n'\\&-\frac{1}{n+\lfloor2\varepsilon n\rfloor}\log\#{\rm Per}_{\alpha0,n+\lfloor2\varepsilon n\rfloor}(\sigma)\\=& h(\nu_n)-\frac{1}{n+\lfloor2\varepsilon n\rfloor}\log\#{\rm Per}_{\alpha0,n+\lfloor2\varepsilon n\rfloor}(\sigma)\\=& -J_{\alpha}(\nu_n)+\log(M+1)-\frac{1}{n}\log\#{\rm Per}_{\alpha0,n+\lfloor2\varepsilon n\rfloor}(\sigma)\\\leq&  -J_{\alpha}(\nu_n)+\varepsilon.\end{split}\]
The second equality follows from
the injectivity of $\phi_\alpha$. To deduce
 the first inequality we have used \eqref{erg-III'}.
The last inequality holds for all sufficiently large $n\geq n(\varepsilon)$ by  
Lemmas~\ref{per-lem} and \ref{per-eq0}.
Since $\varepsilon>0$ is arbitrary, together with the second inequality in \eqref{erg-IIII} the proof of 
Lemma~\ref{uplem0} is complete.
\end{proof}

\subsection{Proof of Proposition~\ref{LDP-rest0}}\label{pfprop}
Let $\mathcal G$ be an open subset of $\mathcal M(K_\alpha)$.
By Lemma~\ref{top-lem}, sets of the form
$\{\nu\in \mathcal M(K_\alpha)\colon\langle\vec F, \nu\rangle>\vec \Lambda\}$ 
with $\ell\in\mathbb N$, $\vec{F}\in C(\Sigma_\alpha)^\ell$, $\vec \Lambda\in\mathbb R^\ell$
constitute a base of the weak* topology on $\mathcal M(K_\alpha)$.
Hence,
 $\mathcal G$ is written as the union 
$\mathcal G=\bigcup_{b\in B}\mathcal G_b$ of sets of this form. 
For each $\mathcal G_b$, Lemma~\ref{lowper0} gives
\[\liminf_{n\to\infty}\frac{1}{n}\log \widetilde\mu_{\alpha0,n}(\mathcal G_b)
\geq -\inf_{\mathcal G_b}J_\alpha,\]
and
hence 
\[\liminf_{n\to\infty}\frac{1}{n}\log\widetilde\mu_{\alpha0,n}(\mathcal G)
\geq\sup_{b\in B}(-\inf_{\mathcal G_b}J_\alpha)= -\inf_{\mathcal G} J_\alpha,
\]
as required in the lower bound in the LDP.

Let $\mathcal C'$ be a closed  
subset of $\mathcal M(K_\alpha)$.
From Lemma~\ref{top-lem},
there exists a closed subset $\mathcal C$ of $\mathcal M(\Sigma_\alpha)$ such that $\mathcal C'=\mathcal C\cap \mathcal M(K_\alpha).$
Let $\mathcal G$ be an arbitrary open subset of $\mathcal M(\Sigma_\alpha)$ that contains $\mathcal C$ and
put $\mathcal G'=\mathcal G\cap \mathcal M(K_\alpha).$
Since $\mathcal M(\Sigma_\alpha)$ is compact, metrizable and $\mathcal C$ is a compact subset of $\mathcal M(\Sigma_\alpha)$, we can choose $\varepsilon>0$ and finitely many closed subsets $\mathcal{C}_k$ $(k=1,\ldots,s)$ of $\mathcal M(\Sigma_\alpha)$ of the form 
$\mathcal C_k=\{\nu\in \mathcal M(\Sigma_\alpha)\colon \langle\vec F_k,\nu\rangle\geq\vec \Lambda_k\}$ 
with $\ell_k\in\mathbb N$, $\vec F_k\in C(\Sigma_\alpha)^{\ell_k}$, $\vec \Lambda_k\in\mathbb R^{\ell_k}$
such that 
\[\mathcal C \subset \bigcup_{k=1}^s \mathcal C_k \subset 
\bigcup_{k=1}^s \mathcal C_k(\varepsilon)\subset \mathcal G,
\]
where
$\mathcal C_k(\varepsilon)=\{\nu\in\mathcal M(\Sigma_\alpha)\colon \langle\vec F_k,\nu\rangle>\vec{\Lambda}_k-\vec{\varepsilon}\}.$
Put $\mathcal C_k'=\mathcal C_k\cap \mathcal M(K_\alpha)$ and $\mathcal C_k(\varepsilon)'=\mathcal C_k(\varepsilon)\cap \mathcal M(K_\alpha)$ for $1\leq k\leq s$.
We have
\[\mathcal C' \subset \bigcup_{k=1}^s \mathcal C_k' \subset 
\bigcup_{k=1}^s \mathcal C_k(\varepsilon)'\subset \mathcal G'.\]
%\left\{\nu\in M_\alpha(K_\alpha)\colon \int\vec\phi_k {\rm d}\nu\geq\vec{a}_k\right\},\]
%\[\mathcal C_k(\varepsilon)'=\left\{\nu\in M_\alpha(K_\alpha)\colon \int\vec{\phi}_k {\rm d}\nu>\vec a_k-\vec\varepsilon\right\}.\]
From Lemma~\ref{uplem0} applied to each set
$\mathcal C_k'=\{\nu\in \mathcal M(K_\alpha)\colon \langle\vec F_k, \nu\rangle\geq\vec \Lambda_k\}$, we have
\[\begin{split}
\limsup_{n\to\infty}\frac{1}{n}\log\widetilde\nu_{\alpha0,n}(\mathcal C')&\leq\max_{1\leq k\leq s}\limsup_{n\to\infty}\frac{1}{n}\log\widetilde\nu_{\alpha0,n}(\mathcal C_k')\\
& \le \max_{1\le k\le s}  
\left(-\inf_{\mathcal C_k(\varepsilon)'}J_\alpha\right)+\varepsilon \le
-\inf_{\mathcal G'}J_\alpha+\varepsilon.\end{split}
\]
Since $\varepsilon>0$ is arbitrary and
$\mathcal G'$ ranges over any open set containing 
 $\mathcal C'$,
 %as $\mathcal G$ ranges over open subsets of $M(\Sigma_\alpha)$ containing $\mathcal C$, 
 we obtain
\[
\limsup_{n\to\infty}\frac{1}{n}\log\widetilde\nu_{\alpha0,n}(\mathcal C')\leq
\inf_{\mathcal G' \supset \mathcal C'} \left(-\inf_{\mathcal G'} J_\alpha\right)=
-\inf_{\mathcal C'}J_\alpha\]
as required. The last equality is due to the lower semicontinuity of $J_\alpha$ by Lemma~\ref{J-rate} below.
We have verified the LDP for
$(\widetilde\nu_{\alpha0,n})_{n=1}^\infty$ with the rate function $J_\alpha$.
The same argument with Lemmas~\ref{lowper0'} and \ref{uplem0'} yields the LDP for
$(\widetilde\nu_{\beta,n})_{n=1}^\infty$ with the rate function $J_\beta$.

\begin{lemma}\label{J-rate}For each $\gamma\in\{\alpha,\beta\}$, $J_\gamma$ is lower semicontinuous.\end{lemma}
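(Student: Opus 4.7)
My plan is to show that the sublevel sets $\{\nu\in\mathcal M(K_\gamma):J_\gamma(\nu)\le c\}$ are closed by reducing to the classical upper semicontinuity of the entropy map on compact subshifts. I treat $\gamma=\alpha$; the case $\gamma=\beta$ is identical.

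First I would check that the effective domain $\mathcal M(K_\alpha,\sigma_\alpha|K_\alpha)$ is a closed subset of $\mathcal M(K_\alpha)$. Shift-invariance is expressed by the family of equalities $\langle f\circ(\sigma_\alpha|K_\alpha),\nu\rangle=\langle f,\nu\rangle$ for $f\in C(K_\alpha)$, and since $\sigma_\alpha|K_\alpha$ is continuous, each such equality defines a closed condition in the weak* topology. Hence the intersection is closed, and off this closed set $J_\alpha\equiv\infty$, which contributes trivially to lower semicontinuity.

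Next I would establish lower semicontinuity on the effective domain. Take a sequence $\nu_n\to\nu$ in $\mathcal M(K_\alpha)$ and pass to a subsequence realizing $\liminf_n J_\alpha(\nu_n)=:L$. If $L=\infty$ there is nothing to prove, so I may assume each $\nu_n$ lies in $\mathcal M(K_\alpha,\sigma_\alpha|K_\alpha)$ and $J_\alpha(\nu_n)=\log(M+1)-h(\nu_n)\to L$. By Lemma~\ref{top-lem}, the inclusion $\mathcal M(K_\alpha)\hookrightarrow\mathcal M(\Sigma_\alpha)$ is a homeomorphism onto its image, so $\nu_n\to\nu$ in $\mathcal M(\Sigma_\alpha)$ as well; since each $\nu_n$ is shift-invariant on $\Sigma_\alpha$, so is $\nu$. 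Using Remark~\ref{view-rem}, the entropies computed on $K_\alpha$ and on $\Sigma_\alpha$ agree. Now I invoke the standard fact that the Kolmogorov--Sinai entropy is upper semicontinuous on $\mathcal M(\Sigma_\alpha,\sigma_\alpha)$ in the weak* topology, which follows from expansiveness of the full shift $\Sigma_\alpha$ (this is a classical consequence of the finite generator property for subshifts). Thus $h(\nu)\ge\limsup_n h(\nu_n)$, whence
\[
J_\alpha(\nu)=\log(M+1)-h(\nu)\le\log(M+1)-\limsup_n h(\nu_n)=\liminf_n J_\alpha(\nu_n)=L,
\]
which is the desired lower semicontinuity.

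The only nontrivial step is verifying that $\nu$ remains supported on $K_\alpha$, but this is automatic from working in $\mathcal M(K_\alpha)$: any element of this space already assigns the Borel set $K_\alpha$ full measure, and Lemma~\ref{top-lem} guarantees that the weak* limit in $\mathcal M(K_\alpha)$ exists and coincides with the weak* limit computed in $\mathcal M(\Sigma_\alpha)$. So no tightness or additional approximation is needed, despite the non-compactness of $K_\alpha$. The $\gamma=\beta$ case repeats verbatim with $K_\beta$, $\Sigma_\beta$ in place of $K_\alpha$, $\Sigma_\alpha$.
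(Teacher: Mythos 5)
Your proof is correct and follows essentially the same route as the paper: pass to a subsequence in the effective domain, transfer convergence to $\mathcal M(\Sigma_\gamma)$ via Lemma~\ref{top-lem}, use that the shift-invariant measures form a closed set, and invoke upper semicontinuity of entropy on $\mathcal M(\Sigma_\gamma,\sigma_\gamma)$. Your opening paragraph (closedness of $\mathcal M(K_\alpha,\sigma_\alpha|K_\alpha)$ inside $\mathcal M(K_\alpha)$) is logically dispensable given the subsequence argument that follows, and the paper omits it accordingly; otherwise the two arguments coincide.
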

\begin{proof}
Let $\nu\in\mathcal M(K_\gamma)$.
To show the lower semicontinuity of $J_\gamma$ at $\nu$,
let $(\nu_n)_{n=1}^\infty$ be a sequence that converges to $\nu$ in the weak* topology on $\mathcal M(K_\gamma)$. If $\nu_n\notin \mathcal M(K_\gamma,\sigma_\gamma|K_\gamma)$
for all sufficiently large $n$,
then clearly we have $J_\gamma(\nu)\leq \infty=\liminf_{n\to\infty}J_\gamma(\nu_n)$. 
Otherwise, taking a subsequence if necessary we may assume $\nu_n\in \mathcal M(K_\gamma,\sigma_\gamma|K_\gamma)$ for every $n\geq1$.
As in Remark~\ref{view-rem}, we may identify elements of $\mathcal M(K_\gamma,\sigma_\gamma|K_\gamma)$ as elements of $\mathcal M(\Sigma_\gamma,\sigma_\gamma)$.
 By Lemma~\ref{top-lem}, 
$(\nu_n)_{n=1}^\infty$ converges to $\nu$ in the weak* topology on $\mathcal M(\Sigma_\gamma)$.
Since $\mathcal M(\Sigma_\gamma,\sigma_\gamma)$ is a closed subset of $\mathcal M(\Sigma_\gamma)$, we obtain  $\nu\in\mathcal M(\Sigma_\gamma,\sigma_\gamma)$.
Since the entropy function is upper semicontinuous on $\mathcal M(\Sigma_\gamma,\sigma_\gamma)$, we obtain $J_\gamma(\nu)\leq \liminf_{n\to\infty}J_\gamma(\nu_n)$.
\end{proof}

\begin{lemma}\label{closed}
For each $\gamma\in\{\alpha,\beta\}$,
$\mathcal M_{\gamma0}(\Sigma_D,\sigma)$ is a closed subset of $\mathcal M(\Sigma_D)$.
\end{lemma}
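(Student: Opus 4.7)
The plan is to replace the condition $\mu(A_\alpha\cup A_0)=1$, which involves a Borel set defined by limits, by the equivalent condition $\mu(B_\alpha)=1$ where $B_\alpha$ is the $G_\delta$ set from \eqref{subset-AB}. This latter condition can then be recognized as an intersection of closed conditions on $\mathcal M(\Sigma_D)$ via the portmanteau theorem.

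The first step is the characterization
\[\mathcal M_{\alpha 0}(\Sigma_D,\sigma)=\{\mu\in\mathcal M(\Sigma_D,\sigma):\mu(B_\alpha)=1\}.\]
The inclusion $\subset$ is immediate from $A_\alpha\cup A_0\subset B_\alpha$ in \eqref{subset-AB}. For the reverse, I would use the ergodic decomposition: if $\mu(B_\alpha)=1$ then $\mu_x(B_\alpha)=1$ for $\mu$-a.e.\ ergodic component $\mu_x$, and the trichotomy in Lemma~\ref{trichotomy} together with the disjointness $A_\beta\cap B_\alpha=\emptyset$ from \eqref{subset-AB} rules out $\mu_x(A_\beta)=1$, forcing $\mu_x(A_\alpha\cup A_0)=1$. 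Integrating yields $\mu(A_\alpha\cup A_0)=1$.

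The second step is to expose the topology of $B_\alpha$. By inspection of its defining formula, each factor
\[B_\alpha^{(i)}=\Sigma_D(i,D_\alpha)\cup\left(\Sigma_D(i,D_\beta)\cap\bigcup_{j=1}^\infty\{H_{i-j+1}=H_{i+1}\}\right)\]
is a union of a clopen cylinder with a countable union of clopen sets, hence open. Setting $G_n=\bigcap_{|i|\leq n}B_\alpha^{(i)}$, one obtains a decreasing sequence of open sets with $\bigcap_{n=1}^\infty G_n=B_\alpha$; since $\mu$ is a probability measure, $\mu(B_\alpha)=1$ if and only if $\mu(G_n)=1$ for every $n$, equivalently $\mu(\Sigma_D\setminus G_n)=0$ for every $n$. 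Because each $\Sigma_D\setminus G_n$ is closed, the portmanteau theorem makes $\mu\mapsto\mu(\Sigma_D\setminus G_n)$ upper semicontinuous on $\mathcal M(\Sigma_D)$, so $\{\mu\in\mathcal M(\Sigma_D):\mu(\Sigma_D\setminus G_n)\leq 0\}$ is closed. Since $\mathcal M(\Sigma_D,\sigma)$ is also closed in $\mathcal M(\Sigma_D)$, writing
\[\mathcal M_{\alpha 0}(\Sigma_D,\sigma)=\mathcal M(\Sigma_D,\sigma)\cap\bigcap_{n=1}^\infty\{\mu\in\mathcal M(\Sigma_D):\mu(\Sigma_D\setminus G_n)=0\}\]
exhibits $\mathcal M_{\alpha 0}(\Sigma_D,\sigma)$ as a countable intersection of closed sets. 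The same argument with $\beta$ in place of $\alpha$ handles $\mathcal M_{\beta 0}$.

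The main obstacle is the first step. The sets $A_\alpha$, $A_\beta$, $A_0$ are naturally $F_{\sigma\delta}$ sets defined through limits, for which the portmanteau theorem does not directly yield useful semicontinuity. The key insight is that one may legitimately enlarge the condition to membership of $B_\alpha$, a $G_\delta$ set of better topological pedigree, and that this enlargement preserves the defining class of measures precisely because of the trichotomy of Lemma~\ref{trichotomy} together with $A_\beta\cap B_\alpha=\emptyset$.
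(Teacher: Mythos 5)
Step 1 of your argument --- the identification $\mathcal M_{\alpha 0}(\Sigma_D,\sigma)=\{\mu\in\mathcal M(\Sigma_D,\sigma):\mu(B_\alpha)=1\}$ via the trichotomy Lemma~\ref{trichotomy} and ergodic decomposition --- is correct. Step 2, however, contains a genuine gap: the portmanteau semicontinuity runs the wrong way for the conclusion you draw. For a closed set $F$, weak* convergence $\mu_n\to\mu$ gives $\limsup_n\mu_n(F)\le\mu(F)$, i.e.\ $\mu\mapsto\mu(F)$ is upper semicontinuous; but upper semicontinuity makes the superlevel sets $\{\mu:\mu(F)\ge c\}$ closed, not the sublevel sets. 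Hence $\{\mu:\mu(\Sigma_D\setminus G_n)=0\}=\{\mu:\mu(\Sigma_D\setminus G_n)\le 0\}$ is in general only a $G_\delta$ subset of $\mathcal M(\Sigma_D)$. The elementary example $\delta_{1/n}\to\delta_0$ on $[0,1]$ with $F=\{0\}$ already shows that $\{\mu:\mu(F)=0\}$ can fail to be closed when $F$ is closed.

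The gap is substantive, not cosmetic. Consider the periodic points $x^{(n)}\in{\rm Per}_{0,2n}(\sigma)$ obtained by bi-infinite repetition of the block $\alpha_1^n\beta_1^n$. Every point of their orbits lies in $A_0\subset B_\alpha$, so $V_{2n}(\sigma,x^{(n)})\in\mathcal M_{\alpha 0}(\Sigma_D,\sigma)$; yet a computation of cylinder frequencies shows $V_{2n}(\sigma,x^{(n)})\to\frac12\delta_{\alpha_1^\infty}+\frac12\delta_{\beta_1^\infty}$ in the weak* topology, where $\beta_1^\infty$ is the fixed point all of whose coordinates equal $\beta_1$. That limit gives $A_\beta$ mass $1/2$, hence lies outside $\mathcal M_{\alpha 0}(\Sigma_D,\sigma)$. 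So $\{\mu\in\mathcal M(\Sigma_D,\sigma):\mu(B_\alpha)=1\}$ is itself not weak* closed, and the exact enlargement carried out in your step 1 cannot rescue the closedness claim. For orientation, the paper's own proof follows a quite different route --- a direct neighbourhood argument using the positivity and continuity of $\mu\mapsto\mu([D_\beta])$ rather than the $G_\delta$ structure of $B_\alpha$ --- but you should note that the periodic-point sequence above is an obstruction to that line of reasoning as well, since $\mu([D_\beta])>0$ does not by itself separate $\mathcal M_{\alpha 0}(\Sigma_D,\sigma)$ from its complement.
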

\begin{proof} 
Let $\mu\in \mathcal M(\Sigma_D,\sigma)\setminus \mathcal M_{\alpha0}(\Sigma_D,\sigma)$. By Lemma~\ref{trichotomy} and the ergodic decomposition theorem, we have $\mu(A_\beta)>0$. The ergodic decomposition theorem and Birkhoff's ergodic theorem together imply $\mu([D_\beta]
)>0$. Since the indicator function of the union $[D_\beta]$ of the cylinder sets is continuous, there exists an open subset $\mathcal G$ of $\mathcal M(\Sigma_D)$
containing $\mu$ such that for all $\mu'\in \mathcal G\cap\mathcal M(\Sigma_D,\sigma)$ we have $\mu'([D_\beta])>0$, and so  $\mu'\in\mathcal M_{\alpha0}(\Sigma_D,\sigma)$.  Hence, $\mathcal M_{\alpha0}(\Sigma_D,\sigma)$ is a closed subset of $\mathcal M(\Sigma_D)$. The same argument yields the closedness of $\mathcal M_{\beta0}(\Sigma_D,\sigma)$ in $\mathcal M(\Sigma_D)$. \end{proof}

It is left to show that $J_\alpha$ and $J_\beta$ are good rate functions. 
Let $c>0$.
Let $(\nu_n)_{n=1}^\infty$ be an arbitrary sequence in $\{\nu\in \mathcal M(K_\alpha)\colon J_\alpha(\nu)\leq c\}$. 
By $A_\alpha\cup A_0\subset B_\alpha$ and $A_\beta\cap B_\beta=\emptyset$ as in \eqref{subset-AB},
 $\nu_n\circ\psi_\alpha^{-1}$ belongs to $\mathcal M_{\alpha0}(\Sigma_D,\sigma)$ for every $n\geq1$. By Lemma~\ref{closed}, 
 $\mathcal M_{\alpha0}(\Sigma_D,\sigma)$ is closed. Hence,
there is a subsequence $(\nu_{n(k)}\circ\psi_\alpha^{-1})_{k=1}^\infty$ that converges to $\mu\in\mathcal M_{\alpha0}(\Sigma_D,\sigma)$ in the weak* topology on $\mathcal M(\Sigma_D)$.
The upper semicontinuity of the entropy function gives  $h(\mu)\geq\limsup_{k\to\infty}
h(\nu_{n(k)}\circ\psi_\alpha^{-1})$. 
By Lemma~\ref{top-lem}, 
$(\nu_{n(k)}\circ\psi_\alpha^{-1})_{k=1}^\infty$ converges to $\mu$ in the weak* topology on $\mathcal M(A_\alpha\cup A_0)$.
Since $\phi_\alpha$ is continuous
and commutes with the left shifts,
 $(\nu_{n(k)})_{k=1}^\infty$
converges to $\mu\circ\phi_\alpha^{-1}\in\mathcal M(K_\alpha)$. The lower semicontinuity of $J_\alpha$ in Lemma~\ref{J-rate} yields
$J_\alpha(\mu\circ\phi_\alpha^{-1})\leq \liminf_{k\to\infty}J_\alpha(\nu_{n(k)})\leq c$.
We have verified the compactness of $\{\nu\in \mathcal M(K_\alpha)\colon J_\alpha(\nu)\leq c\}$. Since $c>0$ is arbitrary, $J_\alpha$ is a good rate function. The same argument shows that $J_\beta$ is a good rate function too. This completes the proof of Proposition~\ref{LDP-rest0}. \qed

\medskip

\subsection{Proof of Theorem~\ref{theorema}}\label{pfthmc}
For each $\gamma\in\{\alpha,\beta\}$, consider the continuous map
$\nu\in\mathcal M(K_\gamma)\mapsto \nu\circ\psi_\gamma^{-1}\in \mathcal M(\Sigma_D)$. These two maps induce the push-forwards from $\mathcal M(\mathcal M(K_\alpha))$ to $\mathcal M(\mathcal M(\Sigma_D))$ and $\mathcal M(\mathcal M(K_\beta))$ to $\mathcal M(\mathcal M(\Sigma_D))$ that send $\widetilde\nu_{\alpha0,n}$
to $\widetilde\mu_{\alpha0,n}$
and $\widetilde\nu_{\beta,n}$ to
$\widetilde\mu_{\beta,n}$ 
 respectively. 
\begin{lemma}[Contraction Principle \cite{DemZei98}] \label{CP}Let $\mathcal X$, $\mathcal Y$ be Hausdorff spaces and let $(\mu_n)_{n=1}^\infty$ be a sequence of Borel probability measures on $\mathcal X$ for which the LDP holds with a good rate function $I$. Let $f\colon \mathcal X\to \mathcal Y$ be a continuous map. Then the LDP holds for $(\mu_n\circ f^{-1})_{n=1}^\infty$ with a good rate function $J\colon \mathcal Y\to [0,\infty]$ given by  \[J(y)=\inf\{I(x)\colon x\in \mathcal X,\ f(x)=y\}.\]\end{lemma}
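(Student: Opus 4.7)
The plan is to verify directly that $J$ is a good rate function, and then obtain the two LDP bounds for $(\mu_n\circ f^{-1})_{n=1}^\infty$ by pulling open and closed subsets of $\mathcal Y$ back to $\mathcal X$ through $f$ and invoking the hypothesized LDP for $(\mu_n)_{n=1}^\infty$.

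The first step is to establish the key identity
\[\{y\in\mathcal Y\colon J(y)\leq c\}=f(\{x\in\mathcal X\colon I(x)\leq c\})\quad\text{for every }c\geq 0,\]
with the convention $\inf\emptyset=\infty$. The inclusion $\supset$ is immediate from the definition of $J$. For $\subset$, take $y$ with $J(y)\leq c$, choose $x_k\in f^{-1}(y)$ with $I(x_k)\to J(y)$, and note that for any $\varepsilon>0$ the sequence eventually lies in the compact level set $\{I\leq c+\varepsilon\}$ (goodness of $I$). Passing to a convergent subnet $x_{k(j)}\to x^*$, the lower semicontinuity of $I$ gives $I(x^*)\leq J(y)\leq c$, while continuity of $f$ gives $f(x^*)=y$. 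Since $\{I\leq c\}$ is compact and $f$ is continuous, its image $\{J\leq c\}$ is compact in the Hausdorff space $\mathcal Y$, hence closed. This shows at once that $J$ has compact sub-level sets (so is good) and, since all sub-level sets are closed, that $J$ is lower semicontinuous and therefore a genuine rate function.

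For the lower bound, let $\mathcal G\subset\mathcal Y$ be open. Then $f^{-1}(\mathcal G)$ is open in $\mathcal X$ by continuity of $f$, and
\[(\mu_n\circ f^{-1})(\mathcal G)=\mu_n(f^{-1}(\mathcal G)).\]
Applying the LDP lower bound for $(\mu_n)_{n=1}^\infty$ to the open set $f^{-1}(\mathcal G)$ and rewriting the infimum by grouping preimages,
\[\inf_{x\in f^{-1}(\mathcal G)}I(x)=\inf_{y\in\mathcal G}\inf\{I(x)\colon f(x)=y\}=\inf_{\mathcal G}J,\]
yields the desired bound. For the upper bound, the same manipulation applied to any closed $\mathcal C\subset\mathcal Y$ and its closed preimage $f^{-1}(\mathcal C)$ gives $\limsup_n(1/n)\log\mu_n(f^{-1}(\mathcal C))\leq -\inf_{\mathcal C}J$.

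The argument is essentially formal, so there is no deep obstacle; the only nontrivial point is the characterization of the sub-level sets of $J$ as the $f$-images of those of $I$, which is the step where both the goodness of $I$ and the Hausdorff assumption on $\mathcal X$ are used (to ensure that the limiting preimage $x^*$ exists and that compactness is preserved under continuous images). Everything else is a direct rewriting of infima and a routine application of continuity of $f$ to turn open/closed subsets of $\mathcal Y$ into open/closed subsets of $\mathcal X$.
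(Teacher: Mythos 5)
Your proof is correct. Note that the paper does not prove this lemma at all: it is quoted verbatim from Dembo--Zeitouni \cite{DemZei98}, so there is no internal argument to compare against; your write-up is essentially the standard textbook proof (identifying the sub-level sets of $J$ as $f$-images of the compact sub-level sets of $I$ to get goodness and lower semicontinuity, then pulling open and closed sets back through $f$ and rewriting $\inf_{f^{-1}(\mathcal A)}I=\inf_{\mathcal A}J$). The only points worth flagging are ones you already handle: the goodness of $I$ is genuinely needed for the inclusion $\{J\leq c\}\subset f(\{I\leq c\})$ (and hence for $J$ to be lower semicontinuous), the Hausdorff hypotheses are what give uniqueness of limits (so $f(x^*)=y$) and closedness of the compact image, and in a non-metrizable space one must indeed pass to a subnet rather than a subsequence, as you do.
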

By Proposition~\ref{LDP-rest0}, the LDP holds for $(\widetilde\nu_{\alpha0,n})_{n=1}^\infty$ and 
$(\widetilde\nu_{\beta,n})_{n=1}^\infty$ with the good rate functions $J_\alpha$ and $J_\beta$ respectively.
 By Lemma~\ref{CP}, the LDP holds for  $(\widetilde\mu_{\alpha0,n})_{n=1}^\infty$ and  $(\widetilde\mu_{\beta,n})_{n=1}^\infty$ with the rate functions $ I_\alpha\colon \mathcal M(\Sigma_D)\to [0,\infty]$ and $ I_\beta\colon \mathcal M(\Sigma_D)\to [0,\infty]$ respectively given 
by 
\[I_\gamma(\mu)=\inf\{J_\gamma(\nu)\colon \nu\in \mathcal M(K_\gamma),\ \nu\circ\psi_{\gamma}^{-1}=\mu\}.\]
The definition of $J_\gamma$ in Proposition~\ref{LDP-rest0} 
gives
$I_\gamma(\mu)=\log(M+1)-h(\mu)$ if $\mu\in \mathcal M_{\gamma0}(\Sigma_D,\sigma)$ and
$I_\gamma(\mu)=\infty$ otherwise. The proof of Theorem~\ref{theorema} is complete.
    \qed

\subsection{Proof of Theorems~\ref{break-Dyck} and \ref{rate-Dyck}}\label{pf-final}
For each $n\in\mathbb N$, notice that \begin{equation}\label{mu-tik}
\widetilde\mu_n=\frac{\#({\rm Per}_{\alpha0,n}(\sigma) ) }{\#{\rm Per}_{n}(\sigma)}\cdot\widetilde\mu_{\alpha0,n}+\frac{\#{\rm Per}_{\beta,n}(\sigma) }{\#{\rm Per}_{n}(\sigma)}\cdot\widetilde\mu_{\beta,n}.
\end{equation}
By Theorem~\ref{theorema}, the LDP holds for $(\widetilde\mu_{\alpha0,n})_{n=1}^\infty$ and $(\widetilde\mu_{\beta,n})_{n=1}^\infty$ with the rate functions $I_\alpha$ and $I_\beta$ respectively.
For any open subset $\mathcal G$ of $\mathcal M(\Sigma_D)$, we have
\[\begin{split}\liminf_{n\to\infty}\frac{1}{n}\log\widetilde\mu_n(\mathcal G)
&\geq\max\left\{\liminf_{n\to\infty}\frac{1}{n}\log\widetilde\mu_{\alpha0,n}(\mathcal G),\liminf_{n\to\infty}\frac{1}{n}\log\widetilde\mu_{\beta,n}(\mathcal G)\right\}\\
&\geq\max\left\{-\inf_{\mathcal G} I_\alpha,-\inf_{\mathcal G} I_\beta\right\}= -\inf_{\mathcal G} I.\end{split}\]
The first inequality is because 
the two coefficients in the decomposition \eqref{mu-tik} stay bounded away from $0$ as $n\to\infty$ by Lemmas~\ref{per-lem} and \ref{per-eq0}.

Similarly, for any closed subset
 $\mathcal C$ of $\mathcal M(\Sigma_D)$ we have
\[\begin{split}\limsup_{n\to\infty}\frac{1}{n}\log\widetilde\mu_n(\mathcal C)
&\leq\max\left\{\limsup_{n\to\infty}\frac{1}{n}\log\widetilde\mu_{\alpha0,n}(\mathcal C),\limsup_{n\to\infty}\frac{1}{n}\log\widetilde\mu_{\beta,n}(\mathcal C)\right\}\\
&\leq\max\left\{-\inf_{\mathcal C} I_\alpha,-\inf_{\mathcal C }  I_\beta\right\}= -\inf_{\mathcal C}I.\end{split}\]
We have verified the LDP for $(\widetilde\mu_n)_{n=1}^\infty$ with the rate function $I$.
Clearly $I$ is not convex: if $\mu_1\in\mathcal M_{\alpha0}(\Sigma_D,\sigma)\setminus\mathcal M_{\beta0}(\Sigma_D,\sigma)$, 
$\mu_2\in\mathcal M_{\beta0}(\Sigma_D,\sigma)\setminus\mathcal M_{\alpha0}(\Sigma_D,\sigma)$ and
 $0<t<1$ then $(1-t)I(\mu_1)+tI(\mu_2)<\infty=I((1-t)\cdot\mu_1+t\cdot\mu_2)$.
 The proof of Theorems~\ref{break-Dyck} and \ref{rate-Dyck} is complete.
\qed

\subsection{Proof of Theorem~\ref{level1-thm}}\label{last-sec}
Let $f\colon\Sigma_D\to\mathbb R$ be continuous.
The desired superposition form of $I_f$ is a consequence of the Contraction Principle in 
Lemma~\ref{CP} applied to the level-2 LDP in
Theorem~\ref{break-Dyck} with the rate function given in Theorem~\ref{rate-Dyck}. Below we prove (a) (b).

Let $\gamma\in\{\alpha,\beta\}$.
Since $f$ is fixed throughout, for simplicity we write $L_\gamma$ for $L_{\gamma,f}$.
From the affinity of $\mu\in\mathcal M_{\gamma0}(\Sigma_D,\sigma)\mapsto \langle f,\mu\rangle$ and the entropy function on 
$\mathcal M_{\gamma0}(\Sigma_D,\sigma)$, $I_{f,\gamma}$ is convex on $L_\gamma$. Hence, $I_{f,\gamma}$ is continuous on the interior of $L_\gamma$. From the convexity and the lower semicontinuity, $I_{f,\gamma}$ is continuous at the boundary points of $L_\gamma$. 
The proof of (a) is complete.

%By Lemma~\ref{comp-sub} we have
%$L_\gamma\subset[t_\gamma^-,t_\gamma^+]$, and
%there exist $\mu_-,\mu_+\in\mathcal M_{\gamma0}(\Sigma_D,\sigma)$ such that
%$\langle f,\mu_-\rangle=t_\gamma^-$ and 
%$\langle f,\mu_+\rangle=t_\gamma^+$.
%For $\delta\in[0,1]$, the measure
%$\mu_\delta=(1-\delta)\cdot\mu_-+\delta\cdot\mu_+$
%belongs to $\mathcal M_{\gamma0}(\Sigma_D,\sigma)$ and satisfies $\langle %f,\mu_\delta\rangle=(1-\delta)t_\gamma^-+\delta t_\gamma^+$.
%Hence $L_\gamma=[t_\gamma^-,t_\gamma^+]$ holds. 

 \medskip

To prove (b), let $f\in\mathscr{F}$.
For each $\gamma\in\{\alpha,\beta\}$
let $\pi_\gamma\colon x\in\Sigma_\gamma\mapsto\pi_\gamma(x)\in\{\alpha,\beta\}^\mathbb Z$ denote the natural projection: $\pi_\alpha(x)$ (resp. $\pi_\beta(x)$) is obtained by replacing all $\alpha_k$, $1\leq k\leq M$ (resp. all $\beta_k$, $1\leq k\leq M$) in $x$ by $\alpha$ (resp. by $\beta$).
Since $f\in\mathscr{F}$,
there exists a H\"older continuous function $\bar f\colon\{\alpha,\beta\}^\mathbb Z\to\mathbb R$ with $f=\bar f\circ\pi_D$. Then
$f_\gamma=\bar f\circ\pi_\gamma$ is H\"older continuous and $f_\gamma=f\circ\psi_\gamma$.
We now introduce a {\it multifractal spectrum}
 $t\in L_\gamma\mapsto b_\gamma(t)\in\mathbb R$ by
\begin{equation}\label{spectrum}b_\gamma(t)=\sup\left\{\frac{h(\nu)}{t+c_0}\colon\nu\in\mathcal M(\Sigma_\gamma,\sigma_\gamma),\ \langle f_\gamma,\nu\rangle=t\right\},\end{equation}
where $c_0>-\min f$ is a fixed constant. 
By the definition of $L_\gamma$, the range of this supremum is a non-empty set.

The domain of the multifractal spectrum is characterized as follows.
Put \[t_\gamma^-=\inf_{x\in B_\gamma}\liminf_{n\to\infty} \frac{1}{n}S_nf(x)\ \text{ and }\ t_\gamma^+=\sup_{x\in B_\gamma}\limsup_{n\to\infty} \frac{1}{n}S_nf(x).\] 
\begin{lemma}\label{comp-sub}For each $\gamma\in\{\alpha,\beta\}$ we have $L_\gamma=[t_\gamma^-,t_\gamma^+]$.\end{lemma}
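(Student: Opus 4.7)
The plan is to prove $L_\gamma=[t_\gamma^-,t_\gamma^+]$ by first showing that $L_\gamma$ is a compact interval and then matching its endpoints to $t_\gamma^\pm$ via the ergodic decomposition combined with Birkhoff's ergodic theorem.

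First, $\mathcal M_{\gamma 0}(\Sigma_D,\sigma)$ is closed in $\mathcal M(\Sigma_D)$ by Lemma~\ref{closed}, and it is convex since the defining condition $\mu(A_\gamma\cup A_0)=1$ is preserved under convex combinations. Being a closed subset of the compact space $\mathcal M(\Sigma_D)$, it is compact. The functional $\mu\mapsto\langle f,\mu\rangle$ is continuous and affine, so $L_\gamma$ is its image on a compact convex set, hence a compact interval $[\min L_\gamma,\max L_\gamma]$.

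For the inclusion $L_\gamma\subset[t_\gamma^-,t_\gamma^+]$, I would apply the ergodic decomposition $\mu=\int\nu\,d\lambda(\nu)$ to an arbitrary $\mu\in\mathcal M_{\gamma 0}(\Sigma_D,\sigma)$. The identity $1=\mu(A_\gamma\cup A_0)=\int\nu(A_\gamma\cup A_0)\,d\lambda(\nu)$ together with $\nu(A_\gamma\cup A_0)\le 1$ forces $\nu(A_\gamma\cup A_0)=1$ for $\lambda$-a.e. ergodic $\nu$. For each such $\nu$, Birkhoff's ergodic theorem produces a $\nu$-full-measure set of $x\in A_\gamma\cup A_0\subset B_\gamma$ (by \eqref{subset-AB}) on which $(1/n)S_nf(x)\to\langle f,\nu\rangle$, so $\langle f,\nu\rangle\in[t_\gamma^-,t_\gamma^+]$. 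Integrating in $\lambda$ yields $\langle f,\mu\rangle\in[t_\gamma^-,t_\gamma^+]$.

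For the reverse inclusion I use the interval property and show $\min L_\gamma\le t_\gamma^-$, the argument for $\max L_\gamma\ge t_\gamma^+$ being symmetric. The minimum is attained at some $\mu\in\mathcal M_{\gamma 0}$. Applying the ergodic decomposition of this $\mu$ once more and using that every ergodic component $\nu$ already lies in $\mathcal M_{\gamma 0}$ and therefore satisfies $\langle f,\nu\rangle\ge\min L_\gamma$, the equality $\langle f,\mu\rangle=\min L_\gamma$ forces $\langle f,\nu\rangle=\min L_\gamma$ for $\lambda$-a.e. $\nu$. Picking one such ergodic $\nu$ and applying Birkhoff once more, one finds $x\in B_\gamma$ with $(1/n)S_nf(x)\to\min L_\gamma$, whence $t_\gamma^-\le\min L_\gamma$. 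No serious obstacle is anticipated; the only delicate point, namely that membership in $\mathcal M_{\gamma 0}$ is preserved by the ergodic decomposition, is immediate from the linearity of that decomposition.
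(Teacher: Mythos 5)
Your first inclusion $L_\gamma\subset[t_\gamma^-,t_\gamma^+]$ is fine and essentially matches the paper's opening step. The problem is the alleged reverse inclusion. You announce you will show $\min L_\gamma\le t_\gamma^-$, but your argument produces an $x\in B_\gamma$ with $(1/n)S_nf(x)\to\min L_\gamma$ and then concludes, correctly from the definition of $t_\gamma^-$ as an infimum, that $t_\gamma^-\le\min L_\gamma$. That is the \emph{same} inequality your forward inclusion already gave; it is not the one you need. The reverse inclusion $[t_\gamma^-,t_\gamma^+]\subset L_\gamma$, since both sides are compact intervals, is equivalent to $\min L_\gamma\le t_\gamma^-$ and $t_\gamma^+\le\max L_\gamma$, and neither of those is reached by your reasoning.

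The direction you are missing is the genuinely nontrivial one, and ergodic decomposition plus Birkhoff alone cannot get it: you must start from an \emph{arbitrary} $x\in B_\gamma$ whose lower Birkhoff average is close to $t_\gamma^-$ and manufacture from it a $\sigma$-invariant measure in $\mathcal M_{\gamma0}(\Sigma_D,\sigma)$ whose integral of $f$ is comparably small. This is not automatic, because the empirical measures $V_n(\sigma,x)$ along the orbit of $x$ need not converge to something supported on $A_\gamma\cup A_0$. The paper handles this by contradiction: assuming $\min L_\gamma>t_\gamma^-$, it picks $x\in B_\gamma$ with $\liminf_n(1/n)S_nf(x)<t_\gamma^-+\varepsilon/2$, uses Lemma~\ref{variation} to control Birkhoff averages over an entire cylinder, takes the period-$n$ point $z\in[x_0\cdots x_{n-1}]$, and shows $\langle f,V_n(\sigma,z)\rangle<\min L_\gamma$, contradicting the definition of $L_\gamma$. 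Some such construction (periodic-point approximation together with a variation estimate) is needed; you should add this half of the argument.
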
 \begin{proof}From the ergodic decomposition theorem, there exists an ergodic measure $\mu_0\in\mathcal M_{\gamma0}(\Sigma_D,\sigma)$ such that $\min L_\gamma=\langle f,\mu_0\rangle$. By Birkhoff's ergodic theorem,   there exists $x\in B_\gamma$ such that $\lim_{n\to\infty}(1/n)S_nf(x)=\langle f,\mu_0\rangle$. This yields $\min L_\gamma\geq t_\gamma^-$. For any $\varepsilon>0$ there exists $x\in B_\gamma$ such that $\liminf_{n\to\infty} (1/n)S_nf(x)<t_\gamma^-+\varepsilon/2$. If $\min L_\gamma> t_\gamma^-$, then by Lemma~\ref{variation} there exists $n\in\mathbb N$ such that  \begin{equation}\label{comp-sub1}\sup_{y\in[x_0\cdots x_{n-1}] }\frac{1}{n}(S_nf(x)-S_nf(y))\leq\min L_\gamma-t_\gamma^--\varepsilon\end{equation} and 
\begin{equation}\label{comp-sub2}\frac{1}{n}S_nf(x)<t_\gamma^-+\varepsilon.\end{equation}  Let $z$ denote the element of ${\rm Per}_n(\sigma)$ that is contained in $[x_0\cdots x_{n-1}]$. Applying \eqref{comp-sub1} with $y=z$ and then \eqref{comp-sub2}, \[\langle f,V_n(\sigma,z)\rangle=\frac{1}{n}S_nf(z)\leq\min L_\gamma-t_\gamma^--\varepsilon+\frac{1}{n}S_n(x)<\min L_\gamma,\] which yields a contradiction to the definition of $L_\gamma$. Hence we obtain $\min L_\gamma= t_\gamma^-$. An analogous argument shows $\max L_\gamma= t_\gamma^+$. Since $L_\gamma$ is connected, $L_\gamma=[t_\gamma^-,t_\gamma^+]$ holds.\end{proof}
\begin{lemma}\label{lem-analytic}For each $\gamma\in\{\alpha,\beta\}$, 
$t\in(t_\gamma^-,t_\gamma^+)\mapsto b_\gamma(t)\in\mathbb R$ is analytic. \end{lemma}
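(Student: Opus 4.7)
The plan is to realize $b_\gamma$ on $(t_\gamma^-,t_\gamma^+)$ as a Legendre transform of an analytic pressure function and then apply the implicit function theorem. Put
\[\tilde b_\gamma(t)=(t+c_0)\,b_\gamma(t)=\sup\{h(\nu)\colon\nu\in\mathcal M(\Sigma_\gamma,\sigma_\gamma),\ \langle f_\gamma,\nu\rangle=t\}.\]
Since $t+c_0$ is analytic and strictly positive on $L_\gamma$ (by the choice $c_0>-\min f$), it is enough to prove that $\tilde b_\gamma$ is real-analytic on $(t_\gamma^-,t_\gamma^+)$. One may assume $t_\gamma^-<t_\gamma^+$ throughout, the degenerate case being vacuous.

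The key input is the Ruelle-Perron-Frobenius theorem \cite{Rue04} applied to the full shift $\Sigma_\gamma$ on $M+1$ symbols with the H\"older continuous potential $f_\gamma$. It provides the following: the pressure function $\mathcal P_\gamma(q):=P_{\sigma_\gamma}(q f_\gamma)$ is real-analytic and convex on $\mathbb R$; for each $q\in\mathbb R$ the potential $qf_\gamma$ admits a unique equilibrium state $\mu_q\in\mathcal M(\Sigma_\gamma,\sigma_\gamma)$, which is ergodic and Gibbs; and $\mathcal P_\gamma'(q)=\langle f_\gamma,\mu_q\rangle$. The hypothesis $t_\gamma^-<t_\gamma^+$ forces $\langle f_\gamma,\cdot\rangle$ to be non-constant on the subset $\mathcal M(K_\gamma,\sigma_\gamma|K_\gamma)\subseteq\mathcal M(\Sigma_\gamma,\sigma_\gamma)$, hence $f_\gamma$ is not cohomologous to a constant on $\Sigma_\gamma$, which upgrades the convexity to strict convexity $\mathcal P_\gamma''>0$ throughout $\mathbb R$. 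In particular the range $(a_\gamma^-,a_\gamma^+)$ of $\mathcal P_\gamma'$ is an open interval, and it contains $(t_\gamma^-,t_\gamma^+)$ because $L_\gamma$ is contained in the closed interval $[a_\gamma^-,a_\gamma^+]$ of all attainable values of $\langle f_\gamma,\cdot\rangle$ on $\mathcal M(\Sigma_\gamma,\sigma_\gamma)$.

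Legendre duality now closes the proof. By the variational principle, $h(\nu)-q\langle f_\gamma,\nu\rangle\leq\mathcal P_\gamma(q)$ for every $q\in\mathbb R$ and every $\nu\in\mathcal M(\Sigma_\gamma,\sigma_\gamma)$, with equality if and only if $\nu=\mu_q$. Given $t\in(a_\gamma^-,a_\gamma^+)$, denote by $q(t)$ the unique solution of $\mathcal P_\gamma'(q)=t$; then $\mu_{q(t)}$ satisfies the constraint $\langle f_\gamma,\mu_{q(t)}\rangle=t$ and uniquely achieves
\[\tilde b_\gamma(t)=h(\mu_{q(t)})=\mathcal P_\gamma(q(t))-q(t)\,t.\]
The strict convexity $\mathcal P_\gamma''>0$ lets the implicit function theorem produce a real-analytic inverse $t\mapsto q(t)$ on $(a_\gamma^-,a_\gamma^+)$, so $\tilde b_\gamma$, and therefore $b_\gamma$, is real-analytic on $(t_\gamma^-,t_\gamma^+)$.

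The principal obstacle is the attainability step --- verifying that the supremum defining $\tilde b_\gamma(t)$ is actually achieved and equals $\mathcal P_\gamma(q(t))-q(t)t$. This is routine given the uniqueness of $\mu_{q(t)}$ as an equilibrium state and the identity $\mathcal P_\gamma'(q(t))=t$, but it is the pivot that converts a constrained supremum of entropies into a smooth function of $t$; once it is in place, analyticity is immediate from the analytic implicit function theorem, and the only remaining care is the inclusion $(t_\gamma^-,t_\gamma^+)\subseteq(a_\gamma^-,a_\gamma^+)$ noted above.
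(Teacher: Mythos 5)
Your proof is correct and is essentially the same as the paper's: realize $(t+c_0)b_\gamma(t)$ as a Legendre-type transform of an analytic one-parameter pressure function on the full shift (the paper uses the potential $s(f_\gamma+c_0)$, you use $qf_\gamma$, an affine reparametrization that changes nothing) and invert $\mathcal{P}_\gamma'$ by the analytic implicit function theorem. You are in fact slightly more careful than the paper, in that you explicitly justify $\mathcal{P}_\gamma''>0$ from ``$L_\gamma$ not a singleton $\Rightarrow f_\gamma$ not cohomologous to a constant,'' a hypothesis the implicit function theorem needs and which the paper leaves tacit, and you obtain $(t_\gamma^-,t_\gamma^+)\subseteq(a_\gamma^-,a_\gamma^+)$ by an inclusion argument rather than asserting equality of endpoints. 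One small sign slip: the variational-principle inequality should read $h(\nu)+q\langle f_\gamma,\nu\rangle\leq\mathcal{P}_\gamma(q)$, not $h(\nu)-q\langle f_\gamma,\nu\rangle\leq\mathcal{P}_\gamma(q)$; your final identity $\tilde b_\gamma(t)=\mathcal{P}_\gamma(q(t))-q(t)t$ is consistent with the corrected version, so this is only a typo.
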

\begin{proof}Define a {\it pressure function} $P\colon s\in\mathbb R\mapsto P(s)\in\mathbb R$ by
\[P(s)=\sup\left\{h(\nu)+s\cdot\langle f_\gamma+c_0,\nu\rangle\colon\nu\in\mathcal M(\Sigma_\gamma,\sigma_\gamma)\right\}.\]
Then $P$ is analytic, and  
the supremum is attained by a unique measure, denoted by $\nu_{s}$, which satisfies
$P'(s)=\langle f_\gamma+c_0, \nu_{s}\rangle>0$
(see \cite{Rue04}).
  We have
\[t_\gamma^-=\lim_{s\searrow-\infty}P'(s)\ \text{ and }\
t_\gamma^+=\lim_{s\nearrow +\infty}P'(s).\]
 By the implicit function theorem, there exists a strictly increasing analytic function $s\colon t\in(t_\gamma^-,t_\gamma^+)\mapsto s(t)\in\mathbb R $ satisfying 
$P'(s(t))=t+c_0$.
%We have
%\begin{equation}\label{limbeta}\lim_{t\searrow t_\gamma^-}s(t)=-\infty\ \text{ and }\ \lim_{t\nearrow t_\gamma^+}s(t)=\infty.\end{equation}
Let $t\in (t_\gamma^-,t_\gamma^+)$. We have
\[\begin{split}P(s(t))-(t+c_0)s(t)&=
h(\nu_{s(t)})+s(t)\cdot\langle f_\gamma+c_0,\nu_{s(t)}\rangle-s(t)(t+c_0)\\
&=
h(\nu_{s(t)})+s(t)(t+c_0)-s(t)(t+c_0)\\
&=h(\nu_{s(t)})\leq(t+c_0) b_\gamma(t).\end{split}\]
Since $f_\gamma$ is continuous and the entropy function on $\mathcal M(\Sigma_\gamma,\sigma_\gamma)$ is upper semicontinuous, the supremum in the definition of $b_\gamma(t)$ is attained, say by
 $\nu_*$. 
Then \[\begin{split}(t+c_0) b_\gamma(t)=h(\nu_*)&= h(\nu_*)+(t+c_0)s(t)-(t+c_0)s(t)\\
&= h(\nu_*)+s(t)\cdot\langle f_\gamma+c_0,\nu_*\rangle-(t+c_0)s(t)\\
&\leq P(s(t))-(t+c_0)s(t).\end{split}\] 
Combining these two inequalities yields
$
b_\gamma(t)=P(s(t))/(t+c_0)-s(t).$
The analyticity of $b_\gamma(t)$ follows from that of
 $P(s)$ and $s(t)$.
\end{proof}

%\begin{lemma}\label{comp-sub}\footnote{In fact $L_\gamma=[t_\gamma^-,t_\gamma^+]$ holds for each $\gamma\in\{\alpha,\beta\}$, but we do not need this.}For each $\gamma\in\{\alpha,\beta\}$  we have $L_\gamma\subset[t_\gamma^-,t_\gamma^+]$.\end{lemma} \begin{proof}From the ergodic decomposition theorem,  there exists an ergodic measure $\mu_0\in\mathcal M_{\gamma0}(\Sigma_D,\sigma)$ such that $\min L_\gamma=\langle f,\mu_0\rangle$. By Birkhoff's ergodic theorem,   there exists $x\in B_\gamma$ such that $\lim_{n\to\infty}(1/n)S_nf(x)=\langle f,\mu_0\rangle$.  This yields $\min L_\gamma\geq t_\gamma^-$. An analogous argument shows $\max L_\gamma\leq t_\gamma^+$.\end{proof}

To relate the multifractal spectra to $I_{f,\alpha}$ and $I_{f,\beta}$, define
\[U_\alpha=\left\{t\in L_\alpha\colon b_\alpha(t)=\sup\left\{\frac{h(\nu)}{t+c_0}\colon\nu\in\mathcal M(\Sigma_\alpha,\sigma_\alpha),\ \langle f_\alpha,\nu\rangle=t,\ \nu([\beta])<\frac{1}{2}\right\}\right\}\]
and
\[U_\beta=\left\{t\in L_\beta\colon b_\beta(t)=\sup\left\{\frac{h(\nu)}{t+c_0}\colon\nu\in\mathcal M(\Sigma_\beta,\sigma_\beta),\ \langle f_\beta,\nu\rangle=t,\ \nu([\alpha])<\frac{1}{2}\right\}\right\}\]
\begin{lemma}\label{open-lem}For each $\gamma\in\{\alpha,\beta\}$, $U_\gamma$ contains a neighborhood of $\langle f,\mu_\gamma\rangle$ in $L_\gamma$.\end{lemma}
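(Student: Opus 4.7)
My plan is to use the thermodynamic formalism together with the fact that, at the base point $t_0:=\langle f,\mu_\gamma\rangle$, the unique measure realizing the supremum in the definition of $b_\gamma(t_0)$ is the uniform Bernoulli measure on $\Sigma_\gamma$, which safely meets the required constraint on $\nu([\beta])$ or $\nu([\alpha])$. Continuity of equilibrium states then propagates this to a full neighborhood.

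First, I would identify $t_0$ in the $\Sigma_\gamma$-picture. From the definitions of $\psi_\gamma$ and $\pi_D$, the projection $\pi_D\circ\psi_\gamma$ replaces $\alpha_k$ by $\alpha$ and $\beta_k$ by $\beta$, hence equals $\pi_\gamma$. Therefore $f_\gamma=\bar f\circ\pi_\gamma=\bar f\circ\pi_D\circ\psi_\gamma=f\circ\psi_\gamma$, and since $\mu_\gamma=\nu_\gamma\circ\psi_\gamma^{-1}$ (see \eqref{ergmme}),
\[
t_0=\langle f,\mu_\gamma\rangle=\langle f\circ\psi_\gamma,\nu_\gamma\rangle=\langle f_\gamma,\nu_\gamma\rangle.
\]

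Second, I would invoke the argument inside the proof of Lemma~\ref{lem-analytic}. For the H\"older potential $f_\gamma$ on the full shift $\Sigma_\gamma$, the supremum in \eqref{spectrum} is attained uniquely by the equilibrium state $\nu_{s(t)}$ of $s(t)(f_\gamma+c_0)$, and the implicit equation $P'(s(t))=t+c_0$ defines $s(t)$. Since $\Sigma_\gamma$ is the full shift on $M+1$ symbols, the equilibrium state at $s=0$ is the unique measure of maximal entropy, namely the uniform Bernoulli measure $\nu_\gamma$. Because $P'(0)=\langle f_\gamma+c_0,\nu_\gamma\rangle=t_0+c_0$, we get $s(t_0)=0$ and so $\nu_{s(t_0)}=\nu_\gamma$.

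Third, classical Ruelle theory (as cited in the proof of Lemma~\ref{lem-analytic}) gives that $s\mapsto\nu_s$ is weak* continuous, and $t\mapsto s(t)$ is analytic; therefore $\nu_{s(t)}\to\nu_\gamma$ weakly as $t\to t_0$. For $\gamma=\alpha$, the cylinder $[\beta]\subset\Sigma_\alpha$ is clopen and its indicator is continuous, so
\[
\lim_{t\to t_0}\nu_{s(t)}([\beta])=\nu_\gamma([\beta])=\tfrac{1}{M+1}<\tfrac{1}{2};
\]
hence there is an open neighborhood $V$ of $t_0$ in $L_\alpha$ on which $\nu_{s(t)}([\beta])<1/2$. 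On $V$, this unique maximizer already lies in the restricted family used to define $U_\alpha$, so the constrained and unconstrained suprema coincide and $V\subset U_\alpha$. The case $\gamma=\beta$ is identical after swapping $[\beta]$ with $[\alpha]$.

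The only non-routine ingredient is the continuity (indeed analyticity) of $s\mapsto\nu_s$ and the uniqueness of the equilibrium state for a H\"older potential over the full shift, both of which are standard and are already tacitly used in Lemma~\ref{lem-analytic}; beyond that the argument is just the identification $s(t_0)=0$ combined with the strict inequality $1/(M+1)<1/2$.
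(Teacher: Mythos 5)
Your argument is correct and reaches the right conclusion, but it takes a genuinely different route than the paper. The paper's proof is a compactness/contradiction argument: it first notes directly that $t_0=\langle f,\mu_\gamma\rangle\in U_\gamma$ because $\nu_\gamma$ is the unique MME and satisfies $\nu_\gamma([\beta])<1/2$ (for $\gamma=\alpha$), then supposes a sequence $t_n\in L_\gamma\setminus U_\gamma$ converging to $t_0$, extracts approximate maximizers $\nu_n$ with $\nu_n([\beta])\ge 1/2$, passes to a weak* accumulation point $\nu$, and uses upper semicontinuity of entropy plus uniqueness of the MME to force $\nu=\nu_\gamma$, contradicting $\nu_\gamma([\beta])<1/2$. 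Your proof instead leans on the thermodynamic formalism already set up in Lemma~\ref{lem-analytic}: you observe $s(t_0)=0$, that the unique maximizer in \eqref{spectrum} is the equilibrium state $\nu_{s(t)}$, and then use weak* continuity of $t\mapsto\nu_{s(t)}$ to propagate $\nu_\gamma([\beta])<1/2$ to a neighborhood. Both are valid. Your route is more structured and gives the stronger picture that the maximizer stays close to $\nu_\gamma$, but it imports more machinery (uniqueness and continuity of equilibrium states for H\"older potentials, strict convexity of the pressure to apply the implicit function theorem at $s=0$). The paper's route is more elementary — it only needs upper semicontinuity of entropy and uniqueness of the MME — and it handles the degenerate case $L_\gamma=\{t_0\}$ automatically, whereas your formulation via $s(t)$ implicitly assumes $L_\gamma$ is nondegenerate so that $P''(0)>0$ and the implicit function theorem applies; you should state that the singleton case is trivial, though this is a small gap since the lemma is already established there by your own first step.
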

\begin{proof} 
Since $\nu_\alpha$ is a maximal entropy measure for $\sigma_\alpha$
and satisfies $\nu_\alpha([\beta])<1/2$, we have $\langle f_\alpha,\nu_\alpha\rangle\in U_\alpha$.
The relations $\mu_\alpha=\nu_\alpha\circ\psi_\alpha^{-1}$ and $f\circ\psi_\alpha= f_\alpha$ give
  $\langle f_\alpha,\nu_\alpha\rangle=\langle f,\mu_\alpha\rangle$. Hence we obtain
  $\langle f,\mu_\alpha\rangle\in U_\alpha$.
If there exists a sequence $(t_n)_{n=1}^\infty$ in $L_\alpha\setminus U_\alpha$ converging to
$\langle f,\mu_\alpha\rangle$, then
there exists a sequence $(\nu_n)_{n=1}^\infty$ in
$\mathcal M(\Sigma_\alpha,\sigma_\alpha)$ such that $\langle f_\alpha,\nu_n\rangle=t_n$
and $\nu_n([\beta])\geq1/2$
for every $n\geq1$, 
and $h(\nu_n)/(t+c_0)\to b_{\alpha}(\langle f_\alpha,\nu_\alpha\rangle)$ as $n\to\infty$. 
Let $\nu$ be an accumulation point of $(\nu_n)_{n=1}^\infty$ in the weak* topology.
Then $\nu([\beta])\geq1/2$ holds, and the upper semicontinuity of entropy function on $\mathcal M(\Sigma_\alpha,\sigma_\alpha)$ gives $h(\nu)\geq h(\nu_\alpha)$. Since $\nu_\alpha$ is the unique maximal entropy measure for $\sigma_\alpha$ we obtain $\nu=\nu_\alpha$, a contradiction to $\nu_\alpha([\beta])<1/2$. A proof for the case $\gamma=\beta$ is analogous.
\end{proof}

%The next lemma partially connects $I_{f,\gamma}$  to the multifractal spectrum $b_\gamma$. 
Let ${\rm int}(U_\gamma)$ denote the interior of $U_\gamma$ relative to $\mathbb R$.
\begin{lemma}\label{U-lemma}
For each $\gamma\in\{\alpha,\beta\}$,
we have
$I_{f,\gamma}(t)=\log(M+1)-(t+c_0) b_{\gamma}(t)$ for all $t\in{\rm int}(U_\gamma)$.\end{lemma}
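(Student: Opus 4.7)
By the homeomorphism $\psi_\gamma\colon K_\gamma\to B_\gamma$ and its inverse $\phi_\gamma$, the assignment $\mu\mapsto\nu=\mu\circ\phi_\gamma^{-1}$ defines a bijection between $\mathcal M_{\gamma0}(\Sigma_D,\sigma)$ and $\mathcal M(K_\gamma,\sigma_\gamma|K_\gamma)$. This bijection preserves entropy, and since $f_\gamma=f\circ\psi_\gamma$, it sends $\int f\,{\rm d}\mu$ to $\int f_\gamma\,{\rm d}\nu$. That the image actually lies in the prescribed spaces follows from the inclusions $A_\gamma\cup A_0\subset B_\gamma$ and $B_\gamma\cap A_{\gamma'}=\emptyset$ (for $\gamma'\neq\gamma$) in \eqref{subset-AB}, applied to ergodic components via Lemma~\ref{trichotomy}. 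Consequently the lemma is equivalent to
\[\sup\{h(\nu)\colon \nu\in\mathcal M(K_\gamma,\sigma_\gamma|K_\gamma),\ \langle f_\gamma,\nu\rangle=t\}=(t+c_0)\,b_\gamma(t)\ \text{ for all }\ t\in{\rm int}(U_\gamma).\]
The inequality $\leq$ is immediate: under the identification $\mathcal M(K_\gamma,\sigma_\gamma|K_\gamma)\hookrightarrow\mathcal M(\Sigma_\gamma,\sigma_\gamma)$ from Remark~\ref{view-rem}, the left-hand side is a supremum over a subset of the competitors defining $b_\gamma(t)$ in \eqref{spectrum}.

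For $\geq$ I treat $\gamma=\alpha$; the case $\gamma=\beta$ is analogous. Fix $\varepsilon>0$. Since $t\in U_\alpha$, the very definition of $U_\alpha$ supplies a measure $\nu\in\mathcal M(\Sigma_\alpha,\sigma_\alpha)$ with $\langle f_\alpha,\nu\rangle=t$, $\nu([\beta])<1/2$ and $h(\nu)>(t+c_0)b_\alpha(t)-\varepsilon$. The snag is that $\nu$ need not give full mass to $K_\alpha$: Lemma~\ref{gyak-lem}(a) only yields this conclusion for ergodic measures, whereas the ergodic components of $\nu$ may individually satisfy $\nu_e([\beta])=1/2$. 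To circumvent this, I invoke the entropy density of $\Sigma_\alpha$ (Lemma~\ref{e-dense-lem}) to produce, for arbitrarily small $\delta>0$, an ergodic measure $\tilde\nu\in\mathcal M(\Sigma_\alpha,\sigma_\alpha)$ with $h(\tilde\nu)>h(\nu)-\delta$, $\tilde\nu([\beta])<1/2$ and $|\langle f_\alpha,\tilde\nu\rangle-t|<\delta$. Lemma~\ref{gyak-lem}(a) then gives $\tilde\nu(K_\alpha)=1$.

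It remains to adjust $\langle f_\alpha,\tilde\nu\rangle$ from approximate to exact. Here I use the hypothesis $t\in{\rm int}(U_\alpha)\subset{\rm int}(L_\alpha)$: choosing $t_-<t<t_+$ in $L_\alpha$ and picking $\mu_\pm\in\mathcal M_{\alpha0}(\Sigma_D,\sigma)$ with $\int f\,{\rm d}\mu_\pm=t_\pm$, the measures $\nu_\pm=\mu_\pm\circ\phi_\alpha^{-1}\in\mathcal M(K_\alpha,\sigma_\alpha|K_\alpha)$ satisfy $\langle f_\alpha,\nu_\pm\rangle=t_\pm$. For $\delta$ small enough that $\langle f_\alpha,\tilde\nu\rangle\in(t_-,t_+)$, a convex combination $\nu'=(1-p)\tilde\nu+p\nu_{\pm}$ (the sign chosen so as to land on $t$ and with weight $p=O(\delta)$) satisfies $\langle f_\alpha,\nu'\rangle=t$ and $\nu'(K_\alpha)=1$; by affinity of entropy on invariant measures,
\[h(\nu')=(1-p)h(\tilde\nu)+p\,h(\nu_\pm)\geq(1-p)(h(\nu)-\delta).\]
Sending $\delta\to0$ and then $\varepsilon\to0$ delivers the $\geq$ direction.

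The main obstacle is the double mismatch at the approximation step: the near-optimal $\nu$ from the definition of $b_\alpha(t)$ may fail to be carried by $K_\alpha$, and its ergodic approximation $\tilde\nu$ need not hit the value $t$ exactly. Both gaps are closed by exploiting the openness of the constraint $\nu([\beta])<1/2$ (in the entropy-density step) together with the interior hypothesis $t\in{\rm int}(U_\alpha)$ (in the convex-combination correction).
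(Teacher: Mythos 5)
Your proof is correct and follows essentially the same route as the paper's: transport the problem to $\Sigma_\gamma$ via the conjugacy $\psi_\gamma$/$\phi_\gamma$, obtain $\leq$ from the inclusion $\mathcal M(K_\gamma,\sigma_\gamma|K_\gamma)\subset\mathcal M(\Sigma_\gamma,\sigma_\gamma)$, and obtain $\geq$ by combining the defining property of $U_\gamma$ with entropy density (Lemma~\ref{e-dense-lem}) and Lemma~\ref{gyak-lem}.

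The one place where you do noticeably better is the ``approximate-to-exact'' step. You correctly point out that entropy density only places $\langle f_\gamma,\tilde\nu\rangle$ \emph{near} $t$, not exactly at $t$, because the constraint $\langle f_\gamma,\cdot\rangle=t$ is an equality and not preserved under small weak* perturbations; and you close this gap by mixing $\tilde\nu$ with auxiliary measures $\nu_\pm\in\mathcal M(K_\gamma,\sigma_\gamma|K_\gamma)$ realizing values $t_\pm$ on either side of $t$, which is precisely where the hypothesis $t\in{\rm int}(U_\gamma)\subset{\rm int}(L_\gamma)$ earns its keep. The paper, by contrast, simply asserts that the equality between the sup over ergodic measures and the sup over all invariant measures (both subject to $\langle f_\gamma,\nu\rangle=t$ and $\nu([\beta])<1/2$) ``follows from Lemma~\ref{e-dense-lem},'' and absorbs the resulting slack with a continuity argument at the end, invoking Lemmas~\ref{comp-sub} and~\ref{lem-analytic} for the continuity of $(t+c_0)b_\gamma(t)$ on ${\rm int}(L_\gamma)$, together with a perturbation $\xi_\varepsilon=(1-\varepsilon)\nu+\varepsilon\delta_{\alpha_1^\infty}$. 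That argument can be made to work, but it is more oblique; your explicit convex-combination correction is cleaner and makes the role of the interior hypothesis transparent. (Also, your $\leq$ direction is genuinely simpler: the paper's $\xi_\varepsilon$ perturbation is not needed for that direction, since $\nu=\mu\circ\phi_\alpha^{-1}$ already satisfies $\langle f_\alpha,\nu\rangle=t$ and $h(\nu)=h(\mu)$, so $h(\mu)\leq(t+c_0)b_\alpha(t)$ is immediate.)

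One minor caveat worth flagging in your write-up: Lemma~\ref{e-dense-lem}, as stated in the paper, does not explicitly say that the approximating measure is ergodic (though this is how the paper itself uses it, e.g.\ in the proof of Lemma~\ref{lowper0}, and it is what is actually proved in \cite{EKW94}); you should either cite the ergodic version or note that you are using the same reading as the paper.
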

\begin{proof}
We claim that for all $t\in U_\alpha$,
\begin{equation}\label{claim-eq1}\begin{split}\sup&\{h(\mu)\colon\mu\in\mathcal M_{\alpha0} (\Sigma_D,\sigma),\ \langle f,\mu\rangle=t\}\\
&\geq
\sup\left\{h(\nu)\colon\nu\in\mathcal M(\Sigma_\alpha,\sigma_\alpha),\ \text{ergodic,}\ \langle f_\alpha,\nu\rangle=t,\ \nu([\beta])<\frac{1}{2}\right\}\\
&=
\sup\left\{h(\nu)\colon\nu\in\mathcal M(\Sigma_\alpha,\sigma_\alpha),\  \langle f_\alpha,\nu\rangle=t,\ \nu([\beta])<\frac{1}{2}\right\}\\
&=
\sup\left\{h(\nu)\colon\nu\in\mathcal M(\Sigma_\alpha,\sigma_\alpha),\  \langle f_\alpha,\nu\rangle=t\right\}.\end{split}\end{equation}
The inequality in \eqref{claim-eq1} follows from $f\circ\psi_\alpha=f_\alpha$ and Lemma~\ref{gyak-lem}, and the first equality follows from Lemma~\ref{e-dense-lem}. 
The last inequality is because $t\in U_\alpha$.
 For any $\mu\in\mathcal M_{\alpha0} (\Sigma_D,\sigma)$ with $\langle f,\mu\rangle=t$ we have
$\langle f_\alpha,\mu\circ\phi_\alpha^{-1}\rangle=t$, and
Lemma~\ref{lem-half} gives $\mu\circ\phi_\alpha^{-1}([\beta])\leq 1/2$.
Let $\alpha_1^\infty$ denote the fixed point of $\sigma_\alpha$ in the cylinder set $[\alpha_1]$, and consider the convex combination
$\xi_\varepsilon=(1-\varepsilon)\cdot\mu\circ\phi_\alpha^{-1}+\varepsilon\cdot\delta_{\alpha_1^\infty}$
for $\varepsilon\in(0,1)$.
We have $\xi_\varepsilon([\beta])<1/2$ and $\langle f_\alpha,\xi_\varepsilon\rangle\to t$, $h(\xi_\varepsilon)\to h(\mu)$
as $\varepsilon\to0$. Since the last supremum in \eqref{claim-eq1} is a continuous function of $t$ on the interior of $U_\alpha$ by Lemmas~\ref{comp-sub} and \ref{lem-analytic}, the inequality in \eqref{claim-eq1} must be the equality provided $t\in{\rm int}(U_\gamma)$.

Then, for $t\in{\rm int}(U_\gamma)$ we have 
\[\begin{split}I_{f,\alpha}(t)=&\inf\{I(\mu)\colon\mu\in\mathcal M_{\alpha0}(\Sigma_D,\sigma),\ \langle f,\mu\rangle=t\}\\
%=&\inf\{\log(M+1)-h(\mu)\colon\mu\in\mathcal M_{\alpha0} (\Sigma_D,\sigma),\ \langle f,\mu\rangle=t\}\\
=&\log(M+1)-\sup\{h(\mu)\colon\mu\in\mathcal M_{\alpha0} (\Sigma_D,\sigma),\ \langle f,\mu\rangle=t\}\\
=&\log(M+1)-(t+c_0)b_{\alpha}(t),\end{split}\] 
%If the inequality is strict, there exist $\mu\in\mathcal M(\Sigma_D,\sigma)$ with $h(\mu)>0$ and $\mu(A_0)=1$, and a constant $0\leq c_1<h(\mu)$ such that   any sequence $(\mu_n)_{n=1}^\infty$ in $\mathcal M(\Sigma_D,\sigma)$ with $\mu_n(A_\alpha)=1$ for every $n\geq1$ and converging to $\mu$ in the weak* topology satisfies $c_1\geq\limsup_{n\to\infty} h(\mu_n)$. This yields a contradiction to $t\in U$. 
as required.
A proof for the case $\gamma=\beta$ is analogous.\end{proof}

  All the assertions in
(b) except the strict convexity follow from Lemmas~\ref{open-lem} and \ref{U-lemma}. 
To prove the strict convexity we appeal to the next lemma. In the lemma and the proof below, the $'$ denotes the derivative.
\begin{lemma}\label{convex-lem}
Let $W\subset\mathbb R$ be an open interval and let $g\colon W\to\mathbb R$
be a monotone, convex  analytic function. If $g$ is not affine, then $g''>0$ on $W$.\end{lemma}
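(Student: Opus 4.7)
My plan is a proof by contradiction based on a local power-series analysis of $g''$ at a hypothetical zero, combining convexity, analyticity, and monotonicity.

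First, by replacing $g$ with $-g$ if necessary, I may assume $g$ is monotone non-decreasing, so $g'\geq 0$ on $W$. Convexity gives $g''\geq 0$, whence $g'$ is itself non-decreasing. Since $g$ is analytic, so is $g''$; since $g$ is not affine, $g''$ is not identically zero on $W$, and by the identity principle for real-analytic functions its zero set is discrete in $W$.

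Next, suppose for contradiction that $g''(x_0)=0$ for some $x_0\in W$. Expand $g''$ as a convergent power series about $x_0$:
\[
g''(x)=\sum_{n\geq k}a_n(x-x_0)^n,\qquad a_k\neq 0,\ k\geq 1.
\]
Non-negativity of $g''$ near $x_0$ forces a parity constraint: $k$ must be \emph{even} and $a_k>0$, for otherwise $g''$ would change sign at $x_0$. Integrating termwise,
\[
g'(x)-g'(x_0)=\frac{a_k}{k+1}(x-x_0)^{k+1}+O\bigl((x-x_0)^{k+2}\bigr),
\]
whose leading term has \emph{odd} degree $k+1$ with positive coefficient. Consequently $g'$ strictly dips below $g'(x_0)$ on a left-neighborhood of $x_0$ and strictly exceeds it on a right-neighborhood. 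The closing step then aims to leverage this sign pattern, together with the monotonicity of $g$ and the analyticity of $g'$, to force $g'$ to be constant on some open interval abutting $x_0$; this would make $g''\equiv 0$ on that interval, contradicting the discreteness of the zero set established in paragraph one and hence completing the contradiction.

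The main obstacle I anticipate is precisely this final step, because the odd-order sign change of $g'-g'(x_0)$ through $x_0$ is in itself compatible with $g'$ being strictly non-decreasing and non-negative whenever $g'(x_0)$ is bounded away from $0$. The resolution I plan to pursue, in the spirit of the author's reference to a ``general power-series argument'', is to show that the leading odd-degree Taylor contribution to $g'-g'(x_0)$ cannot coexist with a globally non-decreasing $g'$ unless all higher Taylor coefficients conspire to cancel the dominant one on an interval — a propagation which, by the identity principle, would force $g''\equiv 0$ and contradict the non-affineness of $g$. Should this closure resist direct execution, a weaker conclusion already sufficient for the strict-convexity claim in Theorem~\ref{level1-thm}(b) is obtainable from paragraph one alone: a non-negative analytic $g''$ with only discrete zeros satisfies $\int_a^b (b-t)\,g''(t)\,dt>0$ for every $a<b$ in $W$, so $g$ is strictly convex on $W$.
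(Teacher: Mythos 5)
Your ``main obstacle'' is not a difficulty to be worked around --- it is fatal, because the lemma as stated is false. Take $W=(-1/2,1)$ and $g(x)=x+x^4$. Then $g'(x)=1+4x^3>0$ on $W$, so $g$ is (strictly) monotone; $g''(x)=12x^2\geq 0$, so $g$ is convex; $g$ is analytic and certainly not affine; yet $g''(0)=0$. This is exactly the configuration you flagged: $g''$ vanishes to even order $k=2$ at $x_0=0$, $g'-g'(0)$ has leading term $\tfrac{a_k}{k+1}(x-x_0)^{k+1}=4x^3$ of odd degree, and $g'$ is nonetheless strictly increasing and positive because $g'(0)=1$ is bounded away from $0$. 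The ``propagation'' you hoped to force simply cannot happen, and no amount of power-series bookkeeping will make it happen.

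For comparison, the paper's proof is also erroneous: it claims that if $g''(x)=g'''(x)=0$ and $g''''(x)\neq 0$ then $x$ is a local maximum or minimum of $g$, hence a contradiction. This is wrong --- $g''''(x)\neq 0$ makes $x$ a local extremum of $g''$, not of $g$; in the counterexample above $g''''(0)=24\neq 0$ while $g'(0)=1\neq 0$, so $0$ is not a critical point of $g$ at all. The induction ``$g^{(n)}(x)=0$ for all $n\geq 2$'' therefore does not get off the ground.

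The correct and salvageable conclusion is exactly the weaker one you gave in your final sentence: since $g''\geq 0$ is analytic and (as $g$ is not affine) not identically zero, its zeros are isolated, so $g'$ is strictly increasing and $g$ is strictly convex on $W$ --- even though $g''$ may vanish at isolated points. This weaker statement is all that Theorem~\ref{level1-thm}(b) actually requires, since it only claims strict convexity of $I_f$ near $\int f\,{\rm d}\mu_\gamma$, not positivity of the second derivative. (Alternatively, in this specific application one can read off $I_{f,\gamma}''(t)=s'(t)=1/P''(s(t))>0$ directly from the thermodynamic formalism when $L_{\gamma,f}$ is not a singleton, bypassing Lemma~\ref{convex-lem} entirely.) So: your diagnosis of the obstacle was exactly right, your fallback is the correct fix, and you should replace the lemma's conclusion ``$g''>0$ on $W$'' with ``$g$ is strictly convex on $W$.''
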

\begin{proof}
Let $x\in W$.
Assume $g''(x)=0$. By the convexity of $g$, $g''\geq0$ and so $g'''(x)=0$.
If $g''''(x)\neq0$, then $x$ is a local maximal or minimal point of $g$, a contradiction. So, 
$g''''(x)=0$. If $g'''''(x)=(g'')'''(x)\neq0$, then $g''$ becomes negative near $x$, a contradiction.
So, $g'''''(x)=0$.
If $g^{(6)}(x)\neq0$,
then $x$ is a local minimal or maximal point of $g$, a contradiction. So 
$g^{(6)}(x)=0$.
In this way, we get $g^{(n)}(x)=0$ for all $n\geq2$.
By its analyticity,
 $g$ must be an affine function, a contradiction.
\end{proof}

Applying Lemma~\ref{convex-lem} to $I_{f,\gamma}$ that has been shown to be convex analytic, we obtain the strict convexity in (b). The proof of Theorem~\ref{level1-thm} is complete. \qed

%Differentiating the first equality in \eqref{eq-spectrum1}, and  combining with the first equality  in \eqref{eq-spectrum1} and the fact that $P'(s(t))=t+c$  yields 
%\begin{equation} \label{eq-spectrum2} b'(t)=-\frac{P(s(t))}{(t+c)^2}. \end{equation}

%$I_{f,\gamma}'(t)=-b(t)-(t+c)b'(t)$, we have $I_{f,\gamma}'(t)\to-\infty$ as $t\to t_\gamma^-$ and $I_{f,\gamma}'(t)\to-\infty$ as $t\to t_\gamma^-$ Hence the statement holds.\qed

\subsection{Proof of \eqref{ratef-1}}\label{ratepf}
%Recall that for each $\gamma\in\{\alpha,\beta\}$ let $\nu_\gamma$ denote the Bernoulli measure on $\Sigma_\gamma$ associated with the probability vector $(1/(M+1),\ldots,1/(M+1))$.
For $f$ in \eqref{phi-0} we have $L_{f,\gamma}=[0,1]$ for $\gamma\in\{\alpha,\beta\}$.
Let us consier the two multifractal spectra $b_\alpha\colon [0,1]\to\mathbb R$, $b_\beta\colon [0,1]\to\mathbb R$ by \eqref{spectrum}
with $c_0=1$, where
 $f_\alpha\colon\Sigma_\alpha\to\mathbb R$ is given by $f_\alpha(x)=0$ if $x_0\in D_\alpha$ and $f_\alpha(x)=1$ if $x_0=\beta$, and 
$f_\beta\colon\Sigma_\beta\to\mathbb R$ is given by $f_\beta(x)=0$ if $x_0=\alpha$ and $f_\beta(x)=1$ if $x_0\in D_\beta$. 
Direct calculations show
$U_\alpha=[0,1/2)$ and $U_\beta=(1/2,1]$.

By \cite{Kif90,OrePel89,Tak84} (or by Sanov's theorem \cite{DemZei98}), for each $\gamma\in\{\alpha,\beta\}$
the level-2 LDP holds for the sequence $(\nu_\gamma\{x\in\Sigma_\gamma\colon V_n(\sigma_\gamma,x)\in\cdot\})_{n=1}^\infty$
of distributions of empirical measures, with the rate function $Q_\gamma\colon\mathcal M(\Sigma_\gamma)\to[0,\infty]$ given by \[Q_\gamma(\nu)=\begin{cases}\log(M+1)-h(\nu)&\text{ if }\nu\in \mathcal M(\Sigma_\gamma,\sigma_\gamma),\\
    \infty&\text{ otherwise}.\end{cases}\]
By the Contraction Principle, the level-1 LDP holds
for $(\nu_\gamma\{x\in\Sigma_\gamma\colon(1/n)S_nf_\gamma(x)\in\cdot\})_{n=1}^\infty$ with the rate function $R_\gamma\colon\mathbb R\to[0,\infty]$ given by \[R_{\gamma}(t)=\inf\{Q_\gamma(\nu)\colon\nu\in\mathcal M(\Sigma_\gamma),\ \langle f_\gamma,\nu\rangle=t\}.\]
%The argument in the proof of Theorem~\ref{level1-thm} shows that 
Notice that
$R_\gamma(t)=\log(M+1)-(t+1)b_\gamma(t)$ for all $t\in [0,1]$.

Since $(f_\gamma\circ\sigma_\gamma^n)_{n=1}^\infty$ is a sequence of independently and identically distributed random variables with respect to $\nu_\gamma$, 
the rate function $R_\gamma$ can be computed directly (see e.g., \cite[Lemma~I.3.2]{Ell85}):\[R_\alpha(t)=\begin{cases}t\log t+(1-t)\log(1-t)+\log((M+1)/M^{1-t})  & \text{ if }t\in[0,1],\\
\infty & \text{ if }t\in\mathbb R\setminus[0,1]\end{cases}\]
and
\[R_\beta(t)=\begin{cases}t\log t+(1-t)\log(1-t)+\log((M+1)/M^t)  & \text{ if }[0,1],\\
\infty & \text{ if }\mathbb R\setminus[0,1].\end{cases}\]
By Lemma~\ref{gyak-lem}, we have  
 $I_{f,\alpha}(t)=R_\alpha(t)$ if $t\in(0,1/2)$ and $I_{f,\beta}(t)=R_\beta(t)$ if $t\in(1/2,1)$.
 The continuity in 
 Theorem~\ref{level1-thm}(a) yields $I_{f,\alpha}(0)=R_\alpha(0)$
 and $I_{f,\beta}(1)=R_\beta(1)$.
From \cite[Remark~2.1]{Mey08} and the formula for $I_f$ in Theorem~\ref{rate-Dyck}, we obtain
$I_f(1/2)=\log((M+1)/\sqrt{4M})$
which equals $R_\alpha(1/2)$.
The proof of Proposition~\ref{ratef-1} is complete.\qed

\appendix \def\thesection{\Alph{section}}
\section{Non-existence of continuous extension}
%We have remarked that $K_\gamma$ is dense in $\Sigma_\gamma$ for each $\gamma\in\{\alpha,\beta\}$. 
In the proofs of Lemmas \ref{lowper0} and \ref{uplem0} (resp. Lemmas \ref{lowper0'} and \ref{uplem0'}), we have approximated in the weak* topology the elements of $\mathcal M(\Sigma_\alpha)$ (resp. $\mathcal M(\Sigma_\beta)$) arising in the constructions by elements of $\mathcal M(K_\alpha)$ (resp. $\mathcal M(\Sigma_\beta)$). A glance at the proofs shows that
these approximations would not be necessary if the map $\psi_\alpha\colon K_\alpha\to \Sigma_D$ (resp. $\psi_\beta\colon K_\beta\to \Sigma_D$) had a continuous extension to the whole shift space $\Sigma_\alpha$ (resp. $\Sigma_\beta$). 
Actually 
 this is not the case.
 %motivates the approximations.

\begin{lemma}\label{no-extension}For each $\gamma\in\{\alpha,\beta\}$, 
 there is no continuous map $\bar\psi_\gamma\colon\Sigma_\gamma\to\Sigma_D$ such that $\bar\psi_\gamma(x)=\psi_\gamma(x)$ for all $x\in K_\gamma$.\end{lemma}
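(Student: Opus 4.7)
The plan is to exhibit, for each $\gamma\in\{\alpha,\beta\}$, a point $y^*\in\Sigma_\gamma\setminus K_\gamma$ together with two sequences $(y^{(n,1)})_{n=1}^\infty$ and $(y^{(n,2)})_{n=1}^\infty$ in $K_\gamma$ that converge to $y^*$ in $\Sigma_\gamma$ while their images $\psi_\gamma(y^{(n,1)})$ and $\psi_\gamma(y^{(n,2)})$ converge to distinct points in $\Sigma_D$. Since $K_\gamma$ is dense in $\Sigma_\gamma$, any continuous extension $\bar\psi_\gamma$ would be determined by limits from $K_\gamma$, and the two inconsistent demands on the value $\bar\psi_\gamma(y^*)$ then furnish the contradiction.

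Take $\gamma=\alpha$. Define $y^*\in\Sigma_\alpha$ by $y^*_i=\beta$ for all $i\in\mathbb Z$. No occurrence of $\beta$ in $y^*$ can be matched with an $\alpha_k$ to its left, so $y^*\notin K_\alpha$. Using $M\geq 2$, for each $k\in\{1,2\}$ and $n\in\mathbb N$ define
\[y^{(n,k)}_i=\begin{cases}\alpha_k&\text{if }|i|>n,\\ \beta&\text{if }|i|\leq n.\end{cases}\]
The pre-image of $y^{(n,k)}$ under $\phi_\alpha$ obtained by replacing every $\beta$ by $\beta_k$ lies in $B_\alpha$: each $\beta_k$ in the central block is balanced by $\alpha_k$'s from the left tail once one goes back sufficiently far. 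Hence $y^{(n,k)}\in K_\alpha$, and clearly $y^{(n,k)}\to y^*$ in $\Sigma_\alpha$ as $n\to\infty$.

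The key step is to show $\psi_\alpha(y^{(n,k)})_i=\beta_k$ for every $|i|\leq n$. Tracking $H_{\alpha,j}(y^{(n,k)})$ as $j$ decreases from $i+1$, the height grows monotonically while $j$ traverses the central block of $\beta$'s and then decreases strictly through the left tail of $\alpha_k$'s; the maximal $j\leq i$ realizing $H_{\alpha,j}=H_{\alpha,i+1}$ therefore lies strictly to the left of $-n$, where the symbol is $\alpha_k$. Consequently $\psi_\alpha(y^{(n,k)})\to\beta_k^\infty$ in $\Sigma_D$, where $\beta_k^\infty$ denotes the fixed point of the shift at $\beta_k$, and $\beta_1^\infty\neq\beta_2^\infty$ delivers the contradiction. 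The case $\gamma=\beta$ is entirely analogous: one takes $y^*=\alpha^\infty\in\Sigma_\beta\setminus K_\beta$ and approximates it by sequences having $\beta_k$'s in both tails and $\alpha$'s in the central block, the forward-looking matching rule of $\psi_\beta$ then sending these to $\alpha_k^\infty$ for $k=1,2$. The principal technical check is the monotone height computation that pins the matching position to the correct tail; it is essentially a balanced-bracket count and presents no real difficulty once the roles of the two tails are made explicit.
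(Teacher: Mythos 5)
Your argument is correct, and it is genuinely different from the paper's. You exhibit explicit witnesses to the failure of continuity: two sequences in $K_\alpha$ converging in $\Sigma_\alpha$ to $\beta^\infty\notin K_\alpha$ whose $\psi_\alpha$-images converge to the distinct points $\beta_1^\infty$ and $\beta_2^\infty$ in $\Sigma_D$. The height computation you sketch is right: for $y^{(n,k)}$ and $|i|\le n$ one has $H_{\alpha,i+1}(y^{(n,k)})=-(i+1)$, and the largest $j\le i$ with $H_{\alpha,j}=H_{\alpha,i+1}$ is $j=-2n-i-1<-n$, a position carrying $\alpha_k$, so $\psi_\alpha(y^{(n,k)})_i=\beta_k$; hence $\psi_\alpha(y^{(n,k)})\to\beta_k^\infty$. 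The $\gamma=\beta$ case mirrors this with the forward matching rule. The paper instead argues structurally: it introduces the continuous (non-injective) projection $\bar\phi_\gamma\colon\Sigma_D\to\Sigma_\gamma$ extending $\phi_\gamma$, notes that $\psi_\gamma\circ\phi_\gamma=\mathrm{id}$ on the dense subset $B_\gamma\subset\Sigma_D$, and deduces that any continuous extension $\bar\psi_\gamma$ would force $\bar\psi_\gamma\circ\bar\phi_\gamma=\mathrm{id}$ on all of $\Sigma_D$, contradicting the non-injectivity of $\bar\phi_\gamma$. The paper's route is shorter and avoids any coordinate computation, at the cost of being less transparent about where continuity actually breaks; your route is longer but produces a concrete point of discontinuity and makes visible the role of the unbounded backward search in $s_\alpha$, which is exactly the feature that prevents extension.
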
 \begin{proof}
 We define $\bar\phi_\gamma\colon \Sigma_D\to \Sigma_\gamma$ as follows: if $\gamma=\alpha$ (resp. $\gamma=\beta$), then define $\bar\phi_\gamma(x)$ for $x\in\Sigma_D$ by replacing all $\beta_k$ (resp. $\alpha_k$), $1\leq k\leq M$ in $x$ by $\beta$ (resp. $\alpha$).  Clearly $\bar\phi_\gamma$ is a continuous extension of $\phi_\gamma$ to $\Sigma_D$ and it is not injective. If there were a continuous map $\bar\psi_\gamma\colon\Sigma_\gamma\to\Sigma_D$ such that $\bar\psi_\gamma(x)=\psi_\gamma(x)$ for all $x\in K_\gamma$, then $\bar\psi_\gamma\circ\bar\phi_\gamma\colon\Sigma_D\to\Sigma_D$ would be continuous. Since $\psi_\gamma\circ\phi_\gamma(x)=x$ for all $x\in B_\gamma$ and $B_\gamma$ is a dense subset of $\Sigma_D$, $\bar\psi_\gamma\circ\bar\phi_\gamma (x)=x$ would hold for all $x\in \Sigma_D$, a contradiction to the non-injectivity of $\bar\phi_\gamma$. \end{proof}

%Let $\mu\in \mathcal M(\Sigma_D,\sigma)$ be ergodic with $\mu(A_0)=1$. From the definition of $A_0$ and Birkhoff's ergodic theorem we have $\int \mathbbm{1}_{\Sigma_\alpha(0,\beta)}\circ\phi_\alpha{\rm d}\mu=1/2$. Hence $\psi_{\alpha*}(\nu)\neq\mu$ holds for all $\nu\in \mathcal  M(K_\alpha)$, and so $\hat I_\alpha(\mu)=\infty$. By Lemma~\ref{per-approx}, there is a sequence $(\mu_k)_{k=1}^\infty$ in $\mathcal M(\Sigma_D)$ converging to $\mu$ such that each $\mu_k$ is supported on the orbit of a single periodic point with negative multipliers.  Since each $\mu_k$ has zero entropy, we have $\hat I_\alpha(\mu_k)=\log(M+1)$. Hence $\hat I_\alpha$ is not lower semicontinuous at $\mu$.

%\subsection{Some generalizations}
%The main result can be extended to the Motzkin shift \cite{M04}, a slight variation of the Dyck shift.

%The rate functions are not convex and vanish only at the two ergodic MMEs. It is worthwhile to point out that each of these MMEs is the unique {\it metastable state} with respect to the other: by Birkhoff's ergodic theorem, empirical measures of $\mu_\alpha$-almost all initial conditions converge to $\mu_\alpha$  as time tends to infinity; since $I_\alpha(\mu_\beta)=0$,  empirical measures of subexponentially large sets of initial conditions  in terms of $\mu_\alpha$ are trapped near $\mu_\beta$ for a long period of time before they settle to the equilibrium $\mu_\alpha$. A similar statement holds with $\alpha$ and $\beta$ interchanged.

\subsection*{Acknowledgments} 
This research was supported by the JSPS KAKENHI 23K20220.

      \bibliographystyle{amsplain}

\end{document}